\documentclass{amsart} 
\usepackage[utf8]{inputenc} 
\usepackage{lmodern}
\usepackage[a4paper,margin=3cm,headheight=.4in,headsep=12pt,heightrounded]{geometry}
\usepackage{amsmath,amsfonts,amssymb,amscd,mathtools}
\usepackage{graphicx} % \includegraphics 
\usepackage{xcolor}
\usepackage{parskip} % starts paragraphs without spaces
\usepackage{microtype} % typesetting looks better when printed
\usepackage{booktabs} % better looking tables
\usepackage{array} % better arrays (eg matrices) 
\usepackage{paralist} % flexible & customisable lists (eg. enumerate/itemize, etc.)
\usepackage{verbatim} % commenting out blocks of text & for better verbatim
\usepackage{tikz}
\usepackage{tikz-cd}
\usepackage{amsthm}
\usepackage{bbm} % extra mathbold fonts 
\usepackage{wrapfig} % helps have text run around the side of pictures 
\usepackage{setspace}
\usepackage{accents}
\usepackage{fancyhdr}
\usepackage{mathrsfs}
\usepackage{mathdots}
\usepackage{mathtools}
\usepackage{empheq}
\usepackage{hyperref}
\usepackage{cleveref}
\usepackage[style=numeric,backend=biber,maxnames=50]{biblatex}
\usepackage{animate}
\usepackage{placeins}
\usepackage{afterpage}
\usepackage[new]{old-arrows}
\usepackage[font=tiny,labelfont=bf]{caption}

\usetikzlibrary{angles,calc,intersections,quotes,arrows.meta}
\usetikzlibrary {positioning}
\usetikzlibrary{decorations.markings}
\usetikzlibrary{%
  matrix,%
  calc,%
  arrows%
}
\newcommand{\spn}{\text{span}} % span of vectors
\newcommand{\abs}[1]{\left| #1 \right|} % absolute values
\newcommand{\norm}[1]{\left \| #1 \right \|} % norms (two absolute value bars)
\newcommand{\nnorm}[1]{{\left\vert\kern-0.25ex\left\vert\kern-0.25ex\left\vert #1 \right\vert\kern-0.25ex\right\vert\kern-0.25ex\right\vert}}
\newcommand{\comments}[1]{} % a command so I can comment out big blocks of stuff

\newcommand\restr[2]{{
  \left.\kern-\nulldelimiterspace 
  #1 
  \vphantom{\big|} 
  \right|_{#2} 
  }}

\DeclareMathOperator{\trace}{tr}
\DeclareMathOperator{\rk}{rank}
\newtheorem{thm}{Theorem}[section]
\newtheorem{lem}[thm]{Lemma}
\newtheorem{cor}[thm]{Corollary}
\newtheorem{prop}[thm]{Proposition}
\theoremstyle{definition}
\newtheorem{defn}[thm]{Definition}
\newtheorem{exa}[thm]{Example}
\theoremstyle{remark}
\newtheorem{rem}[thm]{Remark}

\DeclareMathOperator{\image}{im}
\DeclareMathOperator{\interior}{int}
\DeclareMathOperator{\End}{End}
\DeclareMathOperator{\Spec}{Spec}

\newcommand{\cL}{\mathcal{L}}
\DeclareMathOperator{\Hom}{Hom}

\newcommand{\K}{\mathbbm{k}}

\DeclareMathOperator{\M}{M}
\DeclareMathOperator{\GL}{GL}
\DeclareMathOperator{\I}{I}
\DeclareMathOperator{\V}{V}

\newcommand{\Langle}{\mathop{<}\!}
\newcommand{\Rangle}{\!\mathop{>}}
\newcommand{\mon}{\Langle\underline{x},\underline{y}\Rangle}
\newcommand{\mo}{\Langle\underline{x}\Rangle}

\newcommand{\defeq}{\vcentcolon=}

\crefformat{footnote}{#2\footnotemark[#1]#3}
\hypersetup{
    colorlinks=true,
    linkcolor=blue,
    filecolor=magenta,      
    urlcolor=cyan,
    citecolor=blue,
}
\title{Positivity of hereditary quadratic nc polynomials and \(x^*x\)-convex domains}

\author[A. Islam]{Abdellah Islam}
\address{A. Islam, Department of Mathematics, Drexel University, Philadelphia, PA 19104, USA}
\email{ahi25@drexel.edu}

\author[J. Volčič]{Jurij Volčič}
\address{J. Volčič, Department of Mathematics, University of Auckland, Auckland, New Zealand}
\email{jurij.volcic@auckland.ac.nz}

\author[H.J. Woerdeman]{Hugo J. Woerdeman}
\address{H.J. Woerdeman, Department of Mathematics, Drexel University, Philadelphia, PA 19104, USA}
\email{hugo@math.drexel.edu}

\thanks{HW is partially supported by NSF grant DMS-2348720.}

\subjclass[]{Primary 47A56, 15A22, 16U30; Secondary 13A50, 13J30,
15A69.}

\keywords{Noncommutative polynomials,  hereditary quadratic, nullstellensatz, positivstellensatz, \(\Gamma\)-convexity}

\begin{document}

\begin{abstract} 

    Noncommutative polynomials, elements of the free algebra generated by several noncommuting variables \(x_1,...,x_g,x_1^*,...,x_g^*\), are of importance in noncommutative algebra, free analysis and real algebraic geometry. 
    We allow matrix coefficients and view these noncommutative polynomials as functions in several size-free matrix variables. 
    The \emph{free locus} of a noncommutative polynomial \(f\) is the sequence of hypersurfaces \(\mathscr{Z}_n(f)=\{X\in\M_n(\mathbb{C})^g:\det f(X) = 0\}\).
    When \(f\) is Hermitian, the \emph{positivity domain} of \(f\) is the sequence of semialgebraic sets \(\mathcal{D}_n(f)=\{X\in\M_n(\mathbb{C})^g:f(X)\text{ is positive semidefinite}\}\). 
    
    We show for well-behaved cases that the Euclidean boundary of the positivity domain is Zariski dense in the free locus.
    This leads to a certificate for equality of positivity domains of coefficient-irreducible unsignatured atomic quadratics.
    Finally, we describe the geometric structure of positivity domains of hereditary quadratic polynomials through \(\Gamma\)-convexity.
\end{abstract}

\maketitle

%%%%%%%%%%%%%%%%%%%%%%%%%%%%%%%%%%%%%%%%%%%%%%%%%%%%
% INTRODUCTION
%%%%%%%%%%%%%%%%%%%%%%%%%%%%%%%%%%%%%%%%%%%%%%%%%%%%
\section{Introduction}

The study of polynomial equalities and inequalities have its roots in classical algebraic geometry and real algebraic geometry, respectively. 
In the direction of classical algebraic geometry, a fundamental result is \emph{Hilbert's Nullstellensatz} \cite{H93} characterising the interplay between commutative algebra and affine geometry.
In the direction of real algebraic geometry, the \emph{real nullstellensatz} \cite[Theorem 4.1.4]{BCR98} bridges commutative algebra to the theory of semialgebraic sets.
This real-algebraic perspective allows inequalities due to working over the real numbers, so one can ask questions of positivity. 
Various \emph{positivstellens\"{a}tze}, algebraic certificates that can detect positivity of polynomials, came out of this theory.
Some first examples are the Krivine-Stengle positivstellensatz \cite{K64, S74}, Schm\"{u}dgen's positivstellensatz \cite{S91} and Putinar's positivstellensatz \cite{P93}.

An avenue for generalisation of the above is the introduction of noncommuting variables, in the sense that \(xy\) and \(yx\) are distinct polynomials. 
One can evaluate these polynomials on matrix tuples and talk of vanishing in this setting;
see for instance Amitsur's noncommutative nullstellensatz \cite{A57} where noncommutative polynomials vanishing at matrix tuples are characterised, i.e. tuples of matrices \((X_1,\ldots,X_g)\) with \(f(X_1,\ldots,X_g)=0\).
This is not the only possible interpretation of vanishing. 
For example on the geometric side of the nullstellensatz, Bergman proved a nullstellensatz for noncommutative polynomials \(f\) evaluated at a matrix tuple vanishing in a direction \(v\), i.e., \((X_1,\ldots,X_g)\) a tuple of matrices and \(v\) a nonzero vector such that \(f(X_1,\ldots,X_g)v=0\) \cite{HM04}.
On the algebraic side of the nullstellensatz, an analogue of radical ideals with the nullstellensatz property was developed in \cite{KVV17} for rationally resolvable ideals.
We will focus on the following interpretation of vanishing: given a noncommutative polynomial \(f\) the \emph{free locus} of \(f\) is the set of all matrix tuples \((X_1,\ldots,X_g)\) for which \(f(X_1,\ldots,X_g)\) is singular. 
That is, we are concerned with the vanishing of the determinant of \(f\). 
Notice that the size of the matrices in question is never specified; this is a feature of the free setting, where our evaluations are size-free.
In this way, one can view free loci as sequences of affine hypersurfaces.
A first treatment of this type of vanishing can be found in \cite{HKV18} where nullstellens\"{a}tze for free loci provide a bridge between the geometric picture of hypersurfaces and factorisation of the noncommutative polynomials in the free algebra.
The theory of free loci can be treated real-algebraically as well, as seen in  \cite{HKV20} where this point of view is utilised to prove a real nullstellensatz; 
the key feature required here is the structure of an antilinear involution on the free locus. 
Not only does this give a better grasp on the geometry, but we also gain an analytic flavour by thinking of the involution as an adjoint operation.

The theory of positivity for noncommutative polynomials is more recent than its commutative counterpart, but rich in application. 
Given a noncommutative polynomial \(f\) with real coefficients, evaluation at a symmetric tuple of matrices will produce a symmetric matrix output.
So we can ask about positivity of the output, leading to the theory of positivity domains, i.e., the set of all symmetric matrix tuples \((X_1,\ldots,X_g)\) for which \(f(X_1,\ldots,X_g)\) is positive semidefinite.
Positivity domains are related to semialgebraic sets in real algebraic geometry, and they rose to relevance due to spectrahedra, the feasibility regions of semidefinite programs:
one can write such a region as the positivity domain of a linear matrix polynomial.
A linear positivstellensatz is given in \cite{HKM12a} and a convex positivstellensatz is given in \cite{HKM12b}.
The convex setting resembles the classical positivstellens\"{a}tze in the sense of utilising a noncommutative analogue of the quadratic module. 
In general, these results are more analytic than the nullstellensatz problems;
for example, the proof of the convex positivstellensatz utilises the GNS construction, a correspondence between \(*\)-representations and positive linear functionals.
A real-algebraic treatment of convex positivity domains can be found in \cite{HKMV20}.
The nullstellens\"{a}tze and positivstellens\"{a}tze find applications in semidefinite optimization, control theory, free probability and quantum information theory.
The theory intersects naturally with free analysis, noncommutative algebra and representation theory.

The main contributions of this paper are to positivity domains of hereditary quadratic polynomials. 
Here, hereditary means that all instances of \(x_j^*\) appear to the left of \(x_i\) terms in each monomial. 
For example, \(x_2^*x_1^*x_3+x_3^*x_1+2\) is hereditary, but \(x_2^*x_1x_3^*\) is not. 
We begin this article by outlining some of the basic theory of noncommutative polynomials, free loci and positivity domains in Section \ref{NotTerm}.
The geometry of the free locus when equipped with a natural involution map is understood through the lens of algebraic geometry.
We include the setup and details in Section \ref{RVarieties}, starting by placing free loci firmly in the theory of varieties and then describing the real points of the free locus. 
This allows us to give a geometric relationship between the positivity domain and the free locus in Theorem \ref{posdense}.
Section \ref{PosDomQuadPoly} contains our main result, Theorem \ref{positivstellensatz}.
This theorem gives a correspondence between similarity of self-adjoint hereditary quadratic matrix polynomials and equality of their positivity domain.
While convex positivity domains have been explored in the literature (as detailed in the previous paragraph), less is known of non-convex domains. 
A generalisation of convexity, called \emph{\(\Gamma\)-convexity}, was formulated in \cite{JKMMP20}. 
This is a natural extension of convexity that arises from positivity domains of noncommutative polynomials; 
we make the observation in Section \ref{GammaConvexity} that hereditary quadratic noncommutative polynomials have \(\Gamma\)-convex geometry for a particular \(\Gamma\).

%%%%%%%%%%%%%%%%%%%%%%%%%%%%%%%%%%%%%%%%%%%%%%%%%%%%
% BASIC NOTATION AND TERMINOLOGY
%%%%%%%%%%%%%%%%%%%%%%%%%%%%%%%%%%%%%%%%%%%%%%%%%%%%
\section{Notation and Terminology}\label{NotTerm}
Let \(\K\) be an algebraically closed field of characteristic \(0\). 
Fix a positive integer \(g\).
Let \(\underline{x}=(x_1,\ldots,x_g)\) be a tuple of freely noncommuting variables, and write \(\mo\) for the semigroup freely generated by \(x_1,\ldots,x_g\). 
Elements of \(\mo\) are called \emph{words in the letters} \(x_1,\ldots,x_g\), or \emph{words}, or \emph{monomials}.
Allow the empty word \(\epsilon\) so that \(\mo\) is a monoid.
Define a \textit{noncommutative polynomial} to be an element of the free (associative, unital) \(\K\)-algebra 
\[
    \K\mo = \bigoplus_{w\in\mo}\K w
\]
where the multiplication is done by concatenation on words.
We omit the empty word from our notation, i.e., if \(\alpha\in\K\) then we write \(\alpha=\alpha\epsilon\in\K\mo\).
For each \(n\in\mathbb{N}\), let \(\M_n(-)\) be the functor from rings to rings, taking a ring \(R\) to its \(n\times n\) matrix ring \(\M_n(R)\), and \(\GL_n(-)\) the functor from rings to groups, taking a ring \(R\) to the unit group \(\M_n(R)^\times\) of \(\M_n(R)\). 
We write \(I_n\) and \(0_n\) for the multiplicative and additive identities of \(\M_n(\K)\) respectively, but when the size is clear from context we write \(I\) and \(0\). 
Let \(\M_{d\times e}(R)\) denote the \(R\)-module of \(R\)-linear maps from \(R^e\) to \(R^d\), i.e. \(\M_{d\times e}(R)\defeq\Hom_R(R^e,R^d)\).
A \textit{matrix polynomial} is an element of the \(\K\)-algebra \(\M_d(\K)\otimes_\K\K\mo\).
We write \(0_{d\times e}\) for the additive identity of \(\M_{d\times e}(R)\), and we write \(0\) when the size is clear from context.
The tensor product comes with a \(\K\)-bilinear map \(\M_d(\K)\times\K\mo\xlongrightarrow{\otimes}\M_d(\K)\otimes_\K\K\mo\), and we write \(\otimes(A,w)=A\otimes w\) for any \(A\in\M_d(\K)\) and \(w\in\K\mo\).
Since \(\M_d(\K)\otimes_\K\K\mo\cong\M_d(\K\mo)\) we also write \(Aw\) for \(A\otimes w\); 
in particular, we omit the empty word so that \(A\epsilon=A\). 
For any \(\K\)-vector space \(V\) with \(\K\)-linear endomorphism \(\K\)-algebra \(\End_\K(V)\), we get \(\K\)-algebra morphisms 
\[
    \text{ev}_{\underline{X}}:\K\mo\to \End_\K(V):x_i\mapsto X_i,\quad\forall \underline{X}=(X_1,\ldots,X_g)\in\left(\End_\K(V)\right)^g.
\]
This induces \(\K\)-algebra morphisms \(\pi_{\underline{X}}:\M_d(\K)\otimes_\K\K\mo\to \M_d(\K)\otimes_\K \End_\K(V)\) for each tuple \(\underline{X}\in\left(\End_\K(V)\right)^g\), which we refer to as evaluation maps:

\[
    \begin{tikzcd}[row sep=huge, column sep=huge]
        \M_d(\K)\times\K\mo \arrow{r}{\otimes} \arrow[swap]{d}{\text{id}\times\text{ev}_{\underline{X}}} & \M_d(\K)\otimes_\K\K\mo \arrow[d,dashed,"\exists! \, \pi_{\underline{X}}"] \\
        \M_d(\K)\times\End_\K(V) \arrow{r}{\otimes} & \M_d(\K)\otimes_\K\End_\K(V)
    \end{tikzcd}
\]

For any \(p\in \M_d(\K)\otimes_\K\K\mo\), we write \(p(\underline{X})\defeq\pi_{\underline{X}}(p)\) for \(\underline{X}\in\left(\End_\K(V)\right)^g\).
Implicit in the notation \(p(\underline{X})\) is a choice of a target \(\K\)-vector space \(V\), and the evaluation maps depend on this space.
So we may evaluate matrix polynomials at any tuples of endomorphisms on any \(\K\)-vector space \(V\).
In particular we will consider \(V=\K^n\) so that \(\End_\K(V)\cong\M_n(\K)\), and \(\M_d(\K)\otimes_\K \End_\K(V)\) can be identified with \(\M_d(\K)\otimes_\K\M_n(\K)\cong\M_{dn}(\K)\) via the Kronecker product (which we will also denote by \(\otimes\)).

There is a natural degree grading for matrix polynomials: the \emph{length} \(\ell(w)\) of a word \(w\in\mo\) is the number of letters in the word, and the \emph{degree} \(\deg(p)\) of a matrix polynomial \(p\in\M_d(\K)\otimes_\K\K\mo\) is given by
\[
    \deg(p) \defeq \max\left\{\ell(w):C_w\neq0,\quad\text{where}\quad p = \sum_{w\in\mo}C_w\otimes w\right\} 
\]
using that \(\M_d(\K)\otimes_\K\K\mo\) has \(\K\)-basis \(\{E_{ij}\otimes w:i,j\in\{1,\ldots,d\},\,w\in\mo\}\) where \(E_{ij}\) is the \(d\times d\) matrix of all zeros, except the \((i,j)\)-th entry is \(1\).

The above construction is also possible with rectangular matrix coefficients; instead of a tensor product of \(\K\)-algebras, consider the \(\K\)-vector space \(\M_{d\times e}(\K)\otimes_\K\K\mo\). 
While we lose ring structure on each \(\M_{d\times e}(\K)\otimes_\K\K\mo\), we multiply elements of \(\bigsqcup_{d,e\in\mathbb{N}}\M_{d\times e}(\K)\) by composition of \(\K\)-vector space morphisms so that \(fg\) is well-defined when \(f\in\M_{d\times e}(\K)\otimes_\K\K\mo\) and \(g\in\M_{e\times e'}(\K)\otimes_\K\K\mo\).

Degree 1 matrix polynomials are called \textit{linear pencils}, or \textit{pencils}. 
That is, matrix polynomials of the form 
\[
    L=A_0+A_1x_1+A_2x_2+\cdots+A_gx_g,\quad\text{where } A_0,A_1,\ldots,A_g\in \M_d(\K).
\]
If \(A_0=I\), we say that \(L\) is a \textit{monic} pencil.
If \(A_0=0\), we say that \(L\) is a \textit{truly linear} pencil.

The following lemma is a well-known result in ring theory. 
We state it here since this result is used often in later arguments.

\begin{lem}\label{diagunit}
    Let \(R\) be a ring and \(a_1,\ldots,a_d\in R\) such that the diagonal matrix \(f=
    \begin{pmatrix}
    a_1 &  &  \\ 
     & \ddots & \\
     & & a_d 
    \end{pmatrix}\) is a unit in \(\M_d(R)\). 
    Then \(a_1,\ldots,a_d\) are units in \(R\).
\end{lem}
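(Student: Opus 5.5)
The plan is to compute directly with an explicit two-sided inverse and read off diagonal entries. Since \(f\) is a unit in \(\M_d(R)\), there is \(g=(g_{ij})\in\M_d(R)\) with \(fg=I_d\) and \(gf=I_d\). The fact to keep in view is that in a noncommutative ring \(R\) a one-sided inverse need not be two-sided. So I will use both equations and exhibit, for each \(a_i\), a left inverse and a right inverse, which must then coincide.

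First I compute the diagonal entries of \(fg\). Because \(f\) is diagonal, \((fg)_{ij}=a_i g_{ij}\). Comparing the \((i,i)\) entry with \(I_d\) gives \(a_i g_{ii}=1\), so \(g_{ii}\) is a right inverse of \(a_i\). Likewise \((gf)_{ij}=g_{ij}a_j\), and the \((i,i)\) entry of \(gf=I_d\) gives \(g_{ii}a_i=1\), so the same \(g_{ii}\) is a left inverse of \(a_i\). Hence \(a_i\) is a unit in \(R\) with inverse \(g_{ii}\), for every \(i=1,\ldots,d\).

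There is essentially no obstacle; the only care needed is the order of multiplication, since \(R\) need not be commutative, and both \(fg=I\) and \(gf=I\) must be invoked. (One could also show that \(g\) is diagonal by looking at off-diagonal entries, but this is not needed.) The conclusion only concerns units of \(R\), so nothing beyond these two entrywise identities is required.
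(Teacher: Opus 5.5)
Your proposal is correct and follows the paper's argument: take a two-sided inverse of \(f\) and compare the \((i,i)\) entries of both products with the identity to get \(a_i g_{ii}=1\) and \(g_{ii}a_i=1\). The paper does the same, writing the inverse in block form with middle entry \(b\) and deducing \(ba_j=1\) and \(a_jb=1\).
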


\begin{proof}
    Let \(j\in\{1,\ldots,d\}\).
    Since \(f\) is a unit, there exists \(A\in\M_{j-1}(R)\), \(b\in R\) and \(C\in\M_{d-j}(R)\) such that 
    \[
        \begin{pmatrix} 
            A & \bullet & \bullet \\
            \bullet & b & \bullet \\
            \bullet & \bullet & C
        \end{pmatrix}
        \begin{pmatrix}
            a_1 &  &  \\ 
             & \ddots & \\
             & & a_d 
        \end{pmatrix} = I \quad\text{and}\quad
        \begin{pmatrix}
            a_1 &  &  \\ 
             & \ddots & \\
             & & a_d 
        \end{pmatrix}
        \begin{pmatrix} 
            A & \bullet & \bullet \\
            \bullet & b & \bullet \\
            \bullet & \bullet & C
        \end{pmatrix} = I.
    \]
    From the \((j,j)\)-th entry of each of the above two equations, we get \(ba_j=1\) and \(a_jb=1\). 
    So \(a_j\) is a unit.
\end{proof}

With the goal of describing a factorisation theory for matrix polynomials, we require the following definitions of irreducibility, fullness and stable associativity.
While Cohn \cite[Theorem 2.5.1, Corollary 2.5.2]{Cohn06} treats the factorisation theory for matrix rings over any semifir (semi free ideal ring: a ring in which every finitely generated left or right ideal is free of unique rank), we restrict our attention to matrices over \(\K\mo\), which is a fir (free ideal ring: a ring in which every left or right ideal is free of unique rank).

\begin{defn}[{\cite[Section 3.2]{Cohn06}}]\label{atom}
    Let \(R\) be a semifir. 
    We say that \(f\in\M_d(R)\) is an \emph{atom} if \(f\) is nonzero, not a unit in \(R\) and cannot be written as a product of non-units in \(R\).
    We also say that \emph{\(f\) is atomic}.
\end{defn}

\begin{lem}\label{diagatom}
    Let \(R\) be a semifir, \(a_1,\ldots,a_r\in \M_t(R)\) and \(d=rt\). 
    If \(f=\begin{pmatrix}
        a_1 & & \\
        & \ddots & \\
        & & a_r
    \end{pmatrix}\in \M_d(R)\) is an atom, then exactly one of \(a_1,\ldots,a_r\) is an atom and the rest are units.
\end{lem}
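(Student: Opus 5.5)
The plan is to show two things: at least one \(a_j\) fails to be a unit, and any non-unit \(a_j\) forces all the other blocks to be units. Atomicity of that \(a_j\) is then inherited from \(f\). Throughout, write \(D_j\) for the block-diagonal matrix that has \(a_j\) in the \(j\)-th diagonal block and \(I_t\) in every other block. Then
\[
    f = D_1D_2\cdots D_r,
\]
because block-diagonal matrices multiply blockwise. Also, \(D_j\) is a unit in \(\M_d(R)\) if and only if \(a_j\) is a unit in \(\M_t(R)\). One direction is immediate, since the inverse is block diagonal. For the other, apply Lemma \ref{diagunit} with the ring \(\M_t(R)\) in place of \(R\), using \(\M_d(R)\cong\M_r(\M_t(R))\).

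First step: some \(a_j\) is a non-unit. If every \(a_j\) were a unit, then \(f\) would be a unit, contradicting Definition \ref{atom}. Also \(f\neq 0\), so not all \(a_j\) vanish; this is not actually needed once the factorisation below is in hand.

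Second step: at most one \(a_j\) is a non-unit. Suppose \(a_i\) and \(a_j\) with \(i\neq j\) are both non-units. Group the factorisation as \(f = D_i\cdot E\), where \(E\) is the product of the remaining \(D_k\); the \(D_k\) commute, so the order does not matter. Then \(E\) is block diagonal with \(a_j\) in the \(j\)-th block, so \(E\) is not a unit by Lemma \ref{diagunit}. Likewise \(D_i\) is not a unit. This writes \(f\) as a product of two non-units, contradicting atomicity.

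Third step: the unique non-unit \(a_j\) is an atom. It is nonzero, since otherwise \(D_j\) has a zero block row and \(f=D_j\cdot\prod_{k\neq j}D_k\) would have a zero row. Then \(f\) would be a zero divisor, hence a non-unit. Over a semifir one could also argue more directly that this contradicts \(f\) being an atom, but the cleanest route is to observe that \(f\neq 0\) requires some nonzero block, and a zero block \(a_j\) together with invertible other blocks still gives \(f\) as a product \(D_j\cdot\mathrm{diag}(\ldots)\). I would rather argue as follows: \(f\) is an atom, hence nonzero, and a zero \(a_j\) makes \(f\) a product of the non-unit \(D_j\) with \(D_j\) again. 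That gives \(f=D_j\cdot D_j\cdot\prod_{k\neq j}D_k\), because \(0\cdot 0=0\) in the \(j\)-th block. This is a product of non-units, a contradiction.

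Now suppose \(a_j=bc\) with \(b,c\) non-units in \(\M_t(R)\). Let \(B\) be block diagonal with \(b\) in block \(j\) and the units \(a_k\) (\(k\neq j\)) elsewhere. Let \(C\) be block diagonal with \(c\) in block \(j\) and identities elsewhere. Then \(f=BC\), and both factors are non-units by Lemma \ref{diagunit}, contradicting atomicity.

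The main obstacle is the bookkeeping that identifies \(\M_d(R)\) with \(\M_r(\M_t(R))\), so that Lemma \ref{diagunit} applies to block-diagonal matrices. Once that identification is in place, everything else is immediate from Definition \ref{atom}.
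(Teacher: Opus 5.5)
Your proof is correct and follows the same route as the paper. You factor \(f\) into block-diagonal matrices, use Lemma~\ref{diagunit} over \(\M_t(R)\) to decide which factors are units, and split \(a_j=bc\) as \(f=BC\). The only cosmetic difference is that you exclude two non-unit blocks by contradiction, whereas the paper shows directly that the complementary factor \(\mathrm{diag}(f_1,I,f_2)\) must be a unit.

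Your separate paragraph on \(a_j\neq 0\) is muddled. The zero-divisor remark gives no contradiction, since \(f\) is already a non-unit. Also, in \(D_j\cdot D_j\cdot\prod_{k\neq j}D_k\) the last factor is a unit, so that is not a product of non-units. The paragraph is not needed anyway: \(a_j=0=0\cdot 0\) is just the case \(b=c=0\) of your final factorisation, which gives \(f=f\,D_j\) with both factors non-units.
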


\begin{proof}
    If \(a_1,\ldots,a_r\) are all units, then \(f\) is a unit, contradicting that \(f\) is an atom.
    So there is \(j\in\{1,\ldots,r\}\) for which \(a_j\) is not a unit. 
    Let \(f_1= \begin{pmatrix}
        a_1 & & \\
         & \ddots & \\
         & & a_{j-1}
    \end{pmatrix}\) and \(f_2 = \begin{pmatrix}
        a_{j+1} & & \\
        & \ddots & \\
        & & a_r
    \end{pmatrix}\), then 
    \[
        f = \begin{pmatrix}
            f_1 & 0 & 0 \\
            0 & 1 & 0 \\
            0 & 0 & f_2
        \end{pmatrix}\begin{pmatrix}
            I & 0 & 0 \\
            0 & a_j & 0 \\
            0 & 0 & I
        \end{pmatrix}.
    \]
    Since \(f\) is an atom, either \(\begin{pmatrix}
            f_1 & 0 & 0 \\
            0 & 1 & 0 \\
            0 & 0 & f_2
        \end{pmatrix}\) is a unit or \(\begin{pmatrix}
            I & 0 & 0 \\
            0 & a_j & 0 \\
            0 & 0 & I
        \end{pmatrix}\) is a unit. 
        By Lemma \ref{diagunit}, \(\begin{pmatrix}
            I & 0 & 0 \\
            0 & a_j & 0 \\
            0 & 0 & I
        \end{pmatrix}\) is not a unit since \(a_j\) is not a unit. 
        So \(\begin{pmatrix}
            f_1 & 0 & 0 \\
            0 & 1 & 0 \\
            0 & 0 & f_2
        \end{pmatrix}\) must be a unit; 
        again by Lemma \ref{diagunit}, \(a_i\) must be a unit for all \(i\neq j\), i.e., \(a_j\) is the only nonunit.
        Now suppose \(a_j=uv\), then 
        \[
            f = \begin{pmatrix}
                f_1 & & \\
                & u & \\
                & & f_2
            \end{pmatrix}\begin{pmatrix}
                I & & \\
                 & v & \\
                 & & I
            \end{pmatrix}.
        \]
        Since \(f\) is an atom, either \(\begin{pmatrix}
                f_1 & & \\
                & u & \\
                & & f_2
            \end{pmatrix}\) or \(\begin{pmatrix}
                I & & \\
                 & v & \\
                 & & I
            \end{pmatrix}\) is a unit. 
            So by Lemma \ref{diagunit}, either \(u\) or \(v\) is a unit. 
            So \(a_j\) is an atom.
\end{proof}

\begin{defn}[{\cite[Section 0.1]{Cohn06}}]\label{full}
    We say that \(f\in \M_d\left(\K\mo\right)\) is \textit{full} if it cannot be factored as \(f=f_1f_2\) for \(f_1\in \M_{d\times r}\left(\K\mo\right)\) and \(f_2\in \M_{r\times d}\left(\K\mo\right)\) with \(r<d\).
\end{defn}

\begin{rem}\label{atomfull}
    Observe that if \(A\in\M_d(\K\mo)\) were not full then \(A=BC\) where \(B\in\M_{d\times r}(\K\mo)\) and \(C\in\M_{r\times d}(\K\mo)\) with \(r<d\), so that
    \[
        A = \begin{pmatrix} B & 0_{n\times(n-r)}\end{pmatrix} \begin{pmatrix} C\\0_{(n-r)\times n}\end{pmatrix}.
    \]
    Consequently, atoms are full.
\end{rem}

By \cite[Lemma 2.2]{HKV20}, \(f\) is full if and only if there exists \(n\in\mathbb{N}\) and \(\underline{X}\in\M_n(\K)^g\) such that \(\det f(\underline{X})\neq0\).

\begin{defn}[{\cite[Section 0.5]{Cohn06}}]\label{stabass}
    We say that \(f_1\in \M_{d_1}(\K\mo)\) and \(f_2\in \M_{d_2}(\K\mo)\) are \textit{stably associated} if there exist \(P,Q\in\text{GL}_{d_1+d_2}(\K\mo)\) such that \(P(f_1\oplus I_{d_2})=(I_{d_1}\oplus f_2)Q\). 
    That is, the following diagram commutes with exact rows:
    \[
        \begin{tikzcd}[row sep=huge, column sep=huge]
            0 \arrow[r] & \K\mo^{d_1}\oplus\K\mo^{d_2} \arrow[d,swap, "f_1\oplus I_{d_2}"] \arrow[r, "Q"] & \K\mo^{d_1}\oplus\K\mo^{d_2} \arrow[d, "I_{d_1}\oplus f_2"] \arrow[r] & 0 \\
            0 \arrow[r] & \K\mo^{d_1}\oplus\K\mo^{d_2} \arrow[r, "P"] & \K\mo^{d_1}\oplus\K\mo^{d_2} \arrow[r] & 0
        \end{tikzcd}
    \]
\end{defn}
Equivalently by \cite[Theorem 0.5.3]{Cohn06}, \(f_1\) and \(f_2\) are stably associated if and only if
\[
    \K\mo{}^{d_1}/\left(f_1\cdot\K\mo{}^{d_1}\right) \cong \K\mo{}^{d_2}/\left(f_2\cdot\K\mo{}^{d_2}\right),
\]
which follows from universality of the cokernels of \(f_1\oplus I_{d_2}\) and \(I_{d_1}\oplus f_2\).

Every full matrix polynomial \(f\in\M_d(\K\mo)\) splits into atomic factors, and this factorisation is unique up to stable associativity of the factors \cite[Proposition 3.2.9]{Cohn06}.
In particular when \(d=1\), any \(f\neq0\) is full so all nonzero noncommutative polynomials admit an atomic factorisation unique up to stable associativity.

By \cite[Theorem 5.8.3]{Cohn06}, every matrix polynomial \(f\in \M_d(\K)\otimes_\K\K\mo\) is stably associated to a linear pencil \(L=A_0+A_1x_1+\cdots+A_gx_g\) such that 
\[
    \begin{pmatrix} A_{1} & \cdots & A_g\end{pmatrix} \quad\text{and}\quad\begin{pmatrix} A_{1} \\ \vdots \\ A_g\end{pmatrix}
\]
have full rank; we call such a pencil a \emph{linearisation} of \(f\).
A linearisation \(L\) of \(f\) is full if and only if \(f\) is full \cite[Theorem 7.5.13]{Cohn06}, and \(L\) is an atom if and only if \(f\) is an atom \cite[Proposition 3.2.1]{Cohn06}.
Further if \(\det f(0)\neq0\), one can take \(L\) to be monic.

\begin{defn}\label{FL}
    The \textit{free locus} of a matrix polynomial \(f\in \M_d(\K)\otimes_\K\K\mo\) is the set
    \[
        \mathscr{Z}(f)=\bigcup_{n\in\mathbb{N}}\mathscr{Z}_n(f),\quad\text{where}\quad\mathscr{Z}_n(f)=\{\underline{X}\in \M_n(\K)^g:\det f(\underline{X})=0\}.
    \]
    We also say that \(\mathscr{Z}_n(f)\) is the \(n\)-th level of the free locus of \(f\).
\end{defn}

At each \(n\in\mathbb{N}\), the set \(\mathscr{Z}_n(f)\) is an affine variety carved out by a polynomial \(f_n\in k[X_1,\ldots,X_g]\), where we view each \(X_k=(x_{ij})\) as a collection of \(n^2\) commuting variables, and \(f_n\) is the polynomial \(\det f(X_1,\ldots,X_g)\).
Note also that \(\mathscr{Z}(f)\neq\bigcup_{n\in\mathbb{N}}\M_n(\K)^g\) if and only if \(f\) is full \cite[Lemma 2.2]{HKV20}.

\begin{prop}
    If two matrix polynomials \(f\in\M_{d_1}(\K\mo)\) and \(g\in\M_{d_2}(\K\mo)\) are stably associated, then \(\mathscr{Z}(f)=\mathscr{Z}(g)\).
\end{prop}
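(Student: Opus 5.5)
The plan is to evaluate the defining identity of stable associativity at an arbitrary matrix tuple and compare determinants. By Definition \ref{stabass} there are \(P,Q\in\GL_{d_1+d_2}(\K\mo)\) with
\[
    P(f\oplus I_{d_2})=(I_{d_1}\oplus g)Q .
\]
Fix \(n\in\mathbb{N}\) and \(\underline{X}\in\M_n(\K)^g\). The maps \(\pi_{\underline{X}}\) are \(\K\)-algebra morphisms, so we may apply \(\pi_{\underline{X}}\) to both sides and get
\[
    P(\underline{X})\bigl(f(\underline{X})\oplus I_{d_2n}\bigr)=\bigl(I_{d_1n}\oplus g(\underline{X})\bigr)Q(\underline{X}).
\]
Here we use that evaluation respects block-diagonal sums, since it acts entrywise on \(\M_{d_1+d_2}(\K\mo)\), and that it sends \(I\) to \(I\).

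The key step is to show that \(P(\underline{X})\) and \(Q(\underline{X})\) are invertible in \(\M_{(d_1+d_2)n}(\K)\). Since \(P\) is a unit, there is \(P'\) with \(PP'=P'P=I\). Applying the unital algebra morphism \(\pi_{\underline{X}}\) gives \(P(\underline{X})P'(\underline{X})=P'(\underline{X})P(\underline{X})=I\), and the same argument works for \(Q\). Consequently \(\det P(\underline{X})\neq0\) and \(\det Q(\underline{X})\neq0\).

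Taking determinants of the evaluated identity, and using \(\det(A\oplus I)=\det A\), we obtain
\[
    \det P(\underline{X})\,\det f(\underline{X})=\det g(\underline{X})\,\det Q(\underline{X}).
\]
Since \(\det P(\underline{X})\) and \(\det Q(\underline{X})\) are nonzero scalars, \(\det f(\underline{X})=0\) if and only if \(\det g(\underline{X})=0\). This gives \(\mathscr{Z}_n(f)=\mathscr{Z}_n(g)\) for every \(n\), and hence \(\mathscr{Z}(f)=\mathscr{Z}(g)\).

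No step is a serious obstacle. The one point that needs care is that evaluation is a unital ring homomorphism on matrices over \(\K\mo\) and is compatible with the identification \(\M_d(\K)\otimes\M_n(\K)\cong\M_{dn}(\K)\). Under this identification, \(f\oplus I_{d_2}\) evaluates to \(f(\underline{X})\oplus I_{d_2n}\) up to a fixed reordering of the Kronecker factors. That reordering is a permutation similarity, which does not change whether the determinant vanishes, so the argument above goes through.
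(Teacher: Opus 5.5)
Your proposal is correct and follows essentially the same route as the paper. Both evaluate the stable-associativity identity at \(\underline{X}\), deduce that \(\det P(\underline{X})\) and \(\det Q(\underline{X})\) are nonzero by evaluating \(PP'=I\), and compare determinants. Your extra remark about the Kronecker identification is a harmless added detail.
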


\begin{proof}
    Suppose \(f,g\) are stably associated, so that there exists \(P,Q\in\GL_{d_1+d_2}(\K\mo)\) such that \(P(f\oplus I_{d_2})=(I_{d_1}\oplus g)Q\).
    Evaluating at \(\underline{X}\in\M_n(\K)^g\),
    \[
        P(\underline{X})\,(f(\underline{X})\oplus I_{d_2n})=(I_{d_1n}\oplus g(\underline{X}))\,Q(\underline{X})
    \]
    and taking the determinant,
    \[
        \det\left(P(\underline{X})\right)\det\left(f(\underline{X})\right) = \det\left( g(\underline{X})\right) \det\left(Q(\underline{X})\right).
    \]
    Since \(P\) is invertible, \(PP'=I\) for some \(P'\in\GL_{d_1+d_2}(\K\mo)\).
    Then evaluating at \(\underline{X}\) and taking determinant, \(\det P(\underline{X}) \det P'(\underline{X}) = 1\).
    Consequently, \(\det P(\underline{X})\) cannot be zero and similarly, \(\det Q(\underline{X})\) cannot be zero.
    It follows that \(\mathscr{Z}(f)=\mathscr{Z}(g)\).
\end{proof}

In particular we see that, for a linearisation \(L\) of a polynomial \(f\), the free locus does not change: \(\mathscr{Z}(L)=\mathscr{Z}(f)\).
A monic pencil is called \emph{irreducible} if its coefficients generate the full matrix algebra, and \emph{reducible} otherwise.
Note that this property was previously called indecomposable in \cite{HKV20}, while in \cite{HKV18} and \cite{HKMV20} it is referred to as irreducible.
Every monic pencil \(L\) is similar to an upper triangular
\[
    \begin{pmatrix}
        L_1 & \star & \cdots & \star \\
        0 & \ddots & \ddots & \vdots \\
        \vdots & \ddots & \ddots & \star \\
        0 & \cdots & 0 & L_r
    \end{pmatrix}
\]
where each \(L_1,\ldots,L_r\) is either 1 or irreducible (mentioned in \cite[(2.1)]{HKV18}; follows from Burnside's theorem \cite[Corollary 5.23]{B14}). 
Then \(\mathscr{Z}(f)=\bigcup_{j\in\{1,\ldots,r\}}\mathscr{Z}(L_j)\) a union of free loci of irreducible pencils whenever \(\det f(0)\neq0\).

%%%%%%%%%%%%%%%%%%%%%%%%%%%%%%%%%%%%%%%%%%%%%%%%%%%%%%%%%%%%%%%%%%%%%%%%%
%%%%%%% THEORY OVER \mathbb{C} NOW INSTEAD OF OVER GENERAL FIELD
%%%%%%%%%%%%%%%%%%%%%%%%%%%%%%%%%%%%%%%%%%%%%%%%%%%%%%%%%%%%%%%%%%%%%%%%%

Set \(\K=\mathbb{C}\). 
Let \(\norm{\cdot}\) denote the operator norm on \(\M_n(\mathbb{C})\) for each \(n\in\mathbb{N}\). 
We also use \(\norm{\cdot}\) to denote the max norm on \(\M_n(\mathbb{C})^g\).
The intended meaning of the norm will always be clear from the context (i.e., from its input). 
Since these norms are closely related, we avoid introducing additional notation to distinguish between them.

Earlier we introduced the notion of irreducibility for a monic pencil \(L\).
A monic pencil \(L\) being reducible is equivalent to finding \(P,Q\in\GL_d(\mathbb{C})\) so that \(PLQ\) is block upper triangular (see, for instance, \cite[Remark 2.6]{HKV20}). 
This holds even when \(L\) is not necessarily monic, but rather \(L(0)\) is invertible.
We say that such a pencil \(L\), with \(L(0)\) invertible, is \emph{irreducible} if the monic pencil \(L(0)^{-1}L\) is irreducible.
Writing \(L=L(0) + A_1x_1+\cdots+A_gx_g\), then \(L\) is reducible if and only if \(L(0)^{-1}A_1,\ldots,L(0)^{-1}A_g\) have a nontrivial common invariant subspace.
In \cite[Definition 2.5]{HKV20}, a more general definition is given, which we will use.

\begin{defn}
    Let \(L\in\M_d(\mathbb{C}\mo)\) be a pencil. 
    We say that \(L\) is \emph{reducible} if there exists \(P,Q\in\GL_d(\mathbb{C})\) for which 
    \[
        PLQ = \begin{pmatrix}
            \star & \star \\
            0 & \star
        \end{pmatrix}
    \]
    where the zero block is of size \(d'\times d''\) with \(d'+d''=d\).
    We say \(L\) is \emph{irreducible} if \(L\) is not reducible.
\end{defn}
In \cite[Definition 2.5]{HKV20}, this property is called \emph{(in)decomposable}.
A pencil \(L=A_0+A_1x_1+\cdots+A_gx_g\) is reducible if and only if there exists \(T\in\GL_d(\mathbb{C})\) for which \(TA_0,TA_1,\ldots,TA_g\) have a nontrivial common invariant subspace, i.e., they generate the full matrix algebra.

Endow \(\mathbb{C}^n\) with standard Hermitian inner product \(\langle\cdot,\cdot\rangle\) and \(\M_n(\mathbb{C})\) with conjugate-transpose involution \(^*\), for each \(n\).
We introduce a reducibility condition for matrix polynomials with respect to this additional structure of involution and orthogonality in the complex setting.
Recall that \(P\in\M_d(\mathbb{C})\) is an orthogonal projection if \(P^2=P\) and \(P^*=P\). 

\begin{defn}\label{irred}
    We say that \(f\in \M_d(\mathbb{C})\otimes_\mathbb{C}\mathbb{C}\mo\) is \emph{coefficient-reducible} if there exists a nonzero proper orthogonal projection \(P\in \M_d(\mathbb{C})\) such that \((I-P)fP=0\).
    If \(f\) is not coefficient-reducible, we say that \(f\) is \emph{coefficient-irreducible}.
\end{defn}

This definition is looser than that of irreducibility; 
by Burnside's theorem, \(f=\sum_{w\in\mo}A_ww\) is coefficient-irreducible if and only if the \(A_w\) generate the full matrix algebra.
So in particular, any irreducible pencil is coefficient-irreducible.
Elements of \(\M_1(\mathbb{C}) \otimes_\mathbb{C} \mathbb{C}\mo\cong\mathbb{C}\mo\) are always coefficient-irreducible: there are no nonzero proper orthogonal projections in \(\M_1(\mathbb{C})\).

\begin{prop}\label{coeff-red rank}
    A matrix polynomial \(f\in\M_d(\mathbb{C})\otimes_\mathbb{C}\mathbb{C}\mo\) is coefficient-reducible if and only if there exist nonzero \(U,V\in\M_d(\mathbb{C})\) with \(\left(\image U\right)^\perp = \image V\) and \(U^*fV=0\).
\end{prop}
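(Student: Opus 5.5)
Both directions come from translating between the projection $P$ and the pair $(U,V)$ by taking images.

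For the forward direction, suppose $P$ is a nonzero proper orthogonal projection with $(I-P)fP=0$. Set $V\defeq P$ and $U\defeq I-P$. Both are nonzero, since $P\neq 0$ and $P\neq I$. Because $P$ is an orthogonal projection, $\image(I-P)=\ker P=(\image P)^\perp$. Hence $(\image U)^\perp=(\image P)^{\perp\perp}=\image P=\image V$, using that we are in finite dimensions. Finally, $U^*=(I-P)^*=I-P$, so $U^*fV=(I-P)fP=0$.

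For the converse, suppose $U,V$ are nonzero with $(\image U)^\perp=\image V$ and $U^*fV=0$.
\begin{itemize}
\item Let $P$ be the orthogonal projection onto $\image V$. Then $P\neq 0$ because $V\neq 0$.
\item $P\neq I$ because $\image V=(\image U)^\perp\neq\mathbb{C}^d$, since $U\neq 0$.
\item $\image(I-P)=(\image V)^\perp=\image U$.
\end{itemize}
So the orthogonal projection onto $\image U$ is exactly $I-P$.

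It remains to show $(I-P)fP=0$. Write $f=\sum_w A_w w$. The identity $U^*fV=0$ in $\M_d(\mathbb{C})\otimes\mathbb{C}\mo$ means $U^*A_wV=0$ for every word $w$, because the words form a basis. This is the only point needing care: we must pass from a statement about $f$ to one about its individual coefficients, and only then can the images of $U$ and $V$ be used.

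Now fix a word $w$.
\begin{itemize}
\item Since $U^*A_wV=0$, the space $A_w\,\image V$ lies in $\ker U^*=(\image U)^\perp$.
\item $(\image U)^\perp=\image V=\image P$, so $A_w$ maps $\image P$ into $\image P$.
\item $P$ is onto $\image P$, hence $(I-P)A_wP=0$.
\end{itemize}
Summing over all words $w$ gives $(I-P)fP=0$. So $f$ is coefficient-reducible, which completes the equivalence.
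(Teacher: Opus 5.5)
Your proof is correct and follows essentially the same route as the paper: the forward direction takes $U=I-P$, $V=P$ exactly as the paper does, and the converse uses the same orthogonal projection $P$ onto $\image V$. The only cosmetic difference is that you check $(I-P)fP=0$ coefficientwise, via $A_w\,\image V\subseteq\ker U^*=\image V$, whereas the paper pairs with vectors $b^*(I-P)fPa$ decomposed along $\image U\oplus\image V$.
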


\begin{proof}
    Suppose \(U^*fV=0\) for some nonzero \(U,V\in\M_d(\mathbb{C})\) with \(\left(\image U\right)^\perp = \image V\).
    Clearly \(\mathbb{C}^d = \image U \oplus \image V\).
    Let \(P\) be the nonzero proper orthogonal projection onto \(\image V = \left(\image U\right)^\perp\), then \(I-P\) is the projection onto \(\image U = \left(\image V\right)^\perp\).
    For all \(a,b\in\mathbb{C}^d\), writing \(a=u_1+v_1\) and \(b=u_2+v_2\) where \(u_1,u_2\in\image U\) and \(v_1,v_2\in\image V\),
    \[
        b^*(I-P)fPa = u_2^*f(0)+u_2^*fv_1+(0)f(0)+(0)fv_1 = u_2^*fv_1 = 0
    \]
    since \(U^*fV=0\). 
    Consequently, \((I-P)fP=0\).
    
    If \(f\) is coefficient-reducible, there exists a nonzero proper orthogonal projection \(P\) such that \((I-P)fP=0\). 
    Take \(U=I-P\) and \(V=P\).
\end{proof}

\begin{prop}\label{monicirred}
    Let \(L\) be a monic linear pencil. 
    Then \(L\) is coefficient-irreducible if and only if \(L\) is irreducible. 
\end{prop}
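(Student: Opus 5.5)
We prove the two directions separately. Write \(L=I+A_1x_1+\cdots+A_gx_g\) with \(A_j\in\M_d(\mathbb{C})\). The tool throughout is the characterisation just before the statement: a pencil is reducible if and only if there are \(P,Q\in\GL_d(\mathbb{C})\) making \(PLQ\) block upper triangular with a zero block of size \(d'\times d''\), \(d'+d''=d\). For monic pencils this should reduce to the \(A_j\) having a nontrivial common invariant subspace.

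For the easy direction, suppose \(L\) is coefficient-reducible. Then there is a nonzero proper orthogonal projection \(P\) with \((I-P)LP=0\). The constant term gives \((I-P)P=0\), which holds automatically, so the condition is equivalent to \((I-P)A_jP=0\) for all \(j\). Hence \(\image P\) is a nontrivial common invariant subspace of the \(A_j\). Choose a unitary \(U\) whose first columns span \(\image P\). Then \(U^*LU\) is block upper triangular with the zero block in the lower left, so \(L\) is reducible (take the \(P,Q\) of the definition to be \(U^*,U\)).

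For the converse, suppose \(PLQ=\begin{pmatrix}\star&\star\\0&\star\end{pmatrix}\) with zero block of size \(d'\times d''\), where \(d'+d''=d\) and \(d',d''\geq1\). This is the main step, because \(P\) and \(Q\) need not be related, so we must first turn this into an invariant subspace. Let \(V=\spn\) of the first \(d''\) columns of \(Q\), which has dimension \(d''\), and let \(W\) be the span of the first \(d''\) columns of \(P^{-1}\), which also has dimension \(d''\). The block form says exactly that \(LV\subseteq W\) coefficientwise, that is, \(V\subseteq W\) from the constant term \(I\), and \(A_jV\subseteq W\) for each \(j\). Since \(\dim V=\dim W\) and \(V\subseteq W\), we get \(V=W\). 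Thus \(A_jV\subseteq V\) for all \(j\), and \(V\) is a nontrivial proper common invariant subspace.

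Finally, take \(P_0\) to be the orthogonal projection onto \(V\), which is nonzero and proper. Then \((I-P_0)A_jP_0=0\) for all \(j\), and \((I-P_0)IP_0=0\), so \((I-P_0)LP_0=0\) and \(L\) is coefficient-reducible. Alternatively, after establishing invariance one can conclude via Burnside's theorem, as remarked after Definition \ref{irred}. Taking contrapositives in both directions gives the equivalence.
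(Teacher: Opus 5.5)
Your proof is correct, but it takes a somewhat different route from the paper's. The paper uses the algebraic notion of irreducibility for monic pencils (the coefficients generate \(\M_d(\mathbb{C})\)) and passes between this and invariant subspaces via Burnside's theorem.

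\begin{itemize}
\item For coefficient-reducible \(\Rightarrow\) reducible, it invokes Proposition \ref{coeff-red rank} to get \(U,V\) with \(U^*LV=0\). A rank count then shows \(\image V\) is invariant, and Burnside finishes the argument.
\item For the converse, it uses Burnside to produce a common invariant subspace from the failure of the \(A_j\) to generate \(\M_d(\mathbb{C})\). It then projects orthogonally onto that subspace, exactly as in your final step.
\end{itemize}

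You work instead with the block-triangular definition via \(PLQ\), and you avoid Burnside entirely.

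\begin{itemize}
\item In the first direction, you read off invariance of \(\image P\) directly from \((I-P)A_jP=0\). This is a shortcut compared to the paper's detour through Proposition \ref{coeff-red rank}.
\item In the converse, you observe that the constant term \(I\) forces \(V\subseteq W\), hence \(V=W\) by dimension. This turns the two-sided block triangularisation \(PLQ\) into a genuine common invariant subspace.
\end{itemize}

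What your route buys is a self-contained argument. For monic pencils it in effect proves the equivalence between the \(PLQ\) definition and the invariant-subspace condition, which the paper only cites from \cite[Remark 2.6]{HKV20}. What the paper's route buys is a direct link between coefficient-irreducibility and generation of \(\M_d(\mathbb{C})\). That is the form of the observation following Definition \ref{irred}, at the cost of the nontrivial direction of Burnside's theorem.
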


\begin{proof}
    Write \(L=I_d+A_1x_1+\cdots+A_gx_g\in\M_d(\mathbb{C}\mo)\), and suppose \(L\) is coefficient-reducible. 
    By Proposition \ref{coeff-red rank} there exist nonzero \(U,V\in\M_d(\mathbb{C})\) with \(\rk U + \rk V = d\) and \(U^*LV = 0\).
    It follows that \(U^*A_jV = 0\) for all \(j\in\{1,\ldots,g\}\), and \(U^*V=0\).
    Let \(S\neq0\) be the image of \(V\), so that \(U^*A_jS=0\) for all \(j\).
    If, for some \(j\), we had \(A_jS\not\subseteq S\), then there exists \(x\in S\) for which \(A_jx\notin S\).
    So \(\ker U^*\) contains \(A_jx\) and also \(S\), since \(U^*V=0\).
    Well \(S\cup\{A_jx\}\) spans a \((\rk V +1)\)-dimensional subspace so \[\rk U^* = d-\dim\ker U^* \leq d-\rk V - 1=\rk U - 1 < \rk U,\]
    contradiction. 
    It follows that \(A_j S\subseteq S\) for all \(j\).
    Consequently, \(S\) is a common invariant subspace for the subalgebra \(\langle A_1,\ldots,A_g\rangle\) in \(\M_d(\mathbb{C})\).
    By Burnside's theorem \cite[Corollary 5.23]{B14}, \(\M_d(\mathbb{C})\) has no nontrivial common invariant subspace. 
    Then the subalgebra \(\langle A_1,\ldots,A_g\rangle\) cannot be equal to \(\M_d(\mathbb{C})\), so \(L\) is reducible.

    Now suppose \(L\) is reducible, so that \(A_1,\ldots,A_g\) do not generate the full matrix algebra.
    By Burnside's theorem, there exists a nontrivial subspace \(S\) such that \(A_jS\subseteq S\) for all \(j\in\{1,\ldots,g\}\). 
    Let \(P\) be the orthogonal projection onto \(S\), then
    \[
        (I-P)LP=(I-P)P+\sum_{j=1}^{g}(I-P)A_jPx_j.
    \]
    Clearly \((I-P)P=0\), and for each \(j\) we have the complex
    \[
        \begin{tikzcd}
            \mathbb{C}^d \arrow[r, "A_jP"] & S \arrow[r, "I-P"] & 0
        \end{tikzcd}.
    \]
    So \((I-P)LP=0\), which means \(L\) is coefficient-reducible.
\end{proof}

Any nonzero scalar multiple of an irreducible (or coefficient-irreducible) linear pencil will be irreducible (or coefficient-irreducible).
So the above proposition is true for all linear pencils whose constant term is a scalar multiple of identity.
If \(f\) is block-upper-triangular then one can use the projection onto the 0 block corner and discover that \(f\) is coefficient-reducible. 
So coefficient-irreducible matrix polynomials are not unitarily similar to a block-upper-triangular matrix over \(\mathbb{C}\mo\).
This leads us to the following factorisation of matrix polynomials. 

\begin{prop}\label{CIfact}
    Let \(f\in\M_d(\mathbb{C})\otimes_\mathbb{C}\mathbb{C}\mo\).
    Then there exists \(f_1,\ldots,f_r\) coefficient-irreducible and a unitary \(U\in\M_d(\mathbb{C})\) so that 
    \[
        U^*fU = \begin{pmatrix}
            f_1 & \star & \cdots & \star \\
            0 & \ddots & \ddots & \vdots \\
            \vdots & \ddots & \ddots & \star \\
            0 & \cdots & 0 & f_r
        \end{pmatrix}.
    \]
\end{prop}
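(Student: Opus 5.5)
We argue by strong induction on the size \(d\). If \(d=1\), or more generally if \(f\) is coefficient-irreducible, then the statement holds with \(r=1\), \(f_1=f\) and \(U=I_d\). So assume \(f\) is coefficient-reducible. By Definition \ref{irred} there is a nonzero proper orthogonal projection \(P\in\M_d(\mathbb{C})\) with \((I-P)fP=0\).

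Let \(k=\rk P\), so that \(0<k<d\). Choose an orthonormal basis of \(\image P\) and extend it by an orthonormal basis of \((\image P)^\perp=\image(I-P)\). Collect these vectors as the columns of a unitary \(W\in\M_d(\mathbb{C})\), with the \(\image P\) vectors first. In this basis \(W^*PW=I_k\oplus 0_{d-k}\). Write \(W^*fW=\begin{pmatrix} g_{11} & g_{12}\\ g_{21} & g_{22}\end{pmatrix}\), where \(g_{11}\in\M_k(\mathbb{C})\otimes_\mathbb{C}\mathbb{C}\mo\) and \(g_{22}\in\M_{d-k}(\mathbb{C})\otimes_\mathbb{C}\mathbb{C}\mo\). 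Conjugating the identity \((I-P)fP=0\) by \(W\) gives
\[
\begin{pmatrix}0&0\\0&I_{d-k}\end{pmatrix} W^*fW \begin{pmatrix}I_k&0\\0&0\end{pmatrix}=0,
\]
and this product equals the matrix whose only nonzero block is \(g_{21}\) in the lower left corner. Hence \(g_{21}=0\), so \(W^*fW\) is block upper triangular with square diagonal blocks of sizes \(k<d\) and \(d-k<d\).

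Apply the induction hypothesis to \(g_{11}\) and \(g_{22}\). This gives unitaries \(U_1\in\M_k(\mathbb{C})\) and \(U_2\in\M_{d-k}(\mathbb{C})\) such that \(U_1^*g_{11}U_1\) and \(U_2^*g_{22}U_2\) are block upper triangular with coefficient-irreducible diagonal blocks. Now set \(U=W(U_1\oplus U_2)\), which is unitary. Then
\[
U^*fU=\begin{pmatrix}U_1^*g_{11}U_1 & U_1^*g_{12}U_2\\ 0 & U_2^*g_{22}U_2\end{pmatrix},
\]
and this has exactly the required form: its diagonal blocks are those of the two smaller triangularisations, listed in order. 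Conjugating by a constant unitary is harmless here, because scalar matrices commute with the words, so \((U^*\otimes 1)f(U\otimes 1)\) is computed coefficientwise.

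I do not expect a genuine obstacle. The one step needing care is the block computation showing that \((I-P)fP=0\) forces the lower left block to vanish once the basis is adapted to \(P\); this uses that \(P\) is an orthogonal projection, so the adapted basis can be taken orthonormal and \(W\) unitary. The argument also needs that \(P\) is proper and nonzero, so that both blocks are strictly smaller than \(d\) and the induction applies.
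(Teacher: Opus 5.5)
Your proof is correct and follows the same route as the paper. You take the nonzero proper orthogonal projection $P$ with $(I-P)fP=0$, pass to an orthonormal basis adapted to $\image P\oplus(\image P)^\perp$ to get a block upper triangular form with strictly smaller diagonal blocks, and recurse; your strong induction on $d$, applied to both diagonal blocks and glued via $W(U_1\oplus U_2)$, is a slightly cleaner write-up than the paper's, which spells out the recursion only for the upper-left block and leaves the rest to ``continue by induction''.
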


\begin{proof}
    If \(f\) is coefficient-irreducible then we are done. 
    Otherwise, there exists nonzero proper orthogonal projection \(P\) so that \((I-P)fP=0\).
    Let \(S\) be the image of \(P\), then \(S\neq\{0\}\) and \(S\neq\mathbb{C}^d\) since \(P\) is nonzero proper. 
    Let \(U\) be the unitary change of basis to \(S\oplus S^{\perp}\).
    Since \(S^{\perp}=\image(I-P)\) and \((I-P)fP=0\) then 
    \[
        U^*fU=\begin{pmatrix}
            f_1 & \star \\
            0 & f_2
        \end{pmatrix}.
    \]
    Since \(P\) is nonzero proper, each \(f_1\) and \(f_2\) are of strictly smaller size than \(f\).

    If \(f_1\) of matrix size \(d'\) is coefficient-reducible, there exists nonzero proper orthogonal projection \(P'\) so that \((I-P')f_1P=0\). 
    Let \(S'\) be the image of \(P\), then \(S'\neq\{0\}\) and \(S'\neq\mathbb{C}^{d'}\).
    Let \(V\) be the unitary change of basis to \(S'\oplus S'^\perp\). 
    Since \(S'^\perp=\image(I-P')\) and \((I-P')f_1P'=0\) then 
    \[
        V^*f_1V = \begin{pmatrix}
            f_3 & \star \\
            0 & f_4
        \end{pmatrix}
    \]
    so that 
    \[
        (V^*\oplus I_{d-d'})U^*fU(V\oplus I_{d-d'}) = \begin{pmatrix}
            f_3 & \star & \star \\
            0 & f_4 & \star \\
            0 & 0 & f_2
        \end{pmatrix}.
    \]
    Well \(U(V\oplus I_{d-d'})\) is unitary and \(f_3,f_4\) are of strictly smaller size than \(f_1\), so we can continue by induction.
\end{proof}

We have the conjugate-transpose \(^*\) on \(\M_n(\mathbb{C})\), which extends to an involution \(^*\) on \(\M_n(\mathbb{C})^g\).
We would like to internalise this involution from the domain space \(\M_n(\mathbb{C})^g\) to the variables \(\underline{x}\) themselves. 
Let \(\underline{y}=(y_1,\ldots,y_g)\) be a tuple of noncommuting variables, that also do not commute with \(x_1,\ldots,x_g\), and replace \(\mo\) with \(\mon\).
In other words, suppose we had an even number of noncommuting variables.
We define an antilinear involution \(^*\) on \(\mathbb{C}\mon\) as follows. 
On letters, \(x_i^*=y_i\) and \(y_i^*=x_i\).
On words, if \(w_1w_2\cdots w_k\in\mon\) then \((w_1w_2\cdots w_k)^*=w_k^*\cdots w_2^*w_1^*\).
Finally on noncommutative polynomials, if \(p\in\mathbb{C}\mon\) with \[p=\displaystyle\sum_{w\in\mon}p_w\,w,\quad\text{then}\quad p^*=\displaystyle\sum_{w\in\mon}\overline{p_w}\,w^*.\]
Matrix polynomials inherit an involution, also denoted by \(^*\), defined antilinearly as follows:
\[\M_d(\mathbb{C})\otimes_\mathbb{C}\mathbb{C}\mon\to\M_d(\mathbb{C})\otimes_\mathbb{C}\mathbb{C}\mon:A\otimes p\mapsto A^*\otimes p^*.\]
For each \(d,e\in\mathbb{N}\) we get an involution \({}^*\) from \(\M_{d\times e}(\mathbb{C})\otimes_\mathbb{C}\mathbb{C}\mon\) to \(\M_{e\times d}(\mathbb{C})\otimes_\mathbb{C}\mathbb{C}\mon\)), defined the same way. 

\begin{defn}
    For a polynomial \(f\in \M_d(\mathbb{C})\otimes_\mathbb{C}\mathbb{C}\mon\), we say that \(f\) is \textit{self-adjoint} if \(f^*=f\).
\end{defn}

When evaluating a self-adjoint polynomial at a point \((\underline{Z},\underline{Z}^*)\), we get a self-adjoint matrix, so the eigenvalues are all real.

\begin{defn}
    The \emph{real points} of \(\M_n(\mathbb{C})^g\times\M_n(\mathbb{C})^g\) are the tuples \((\underline{Z},\underline{Z}^*)\), denoted by 
    \[
        (\M_n(\mathbb{C})^g\times\M_n(\mathbb{C})^g)(\mathbb{R})\defeq\{(\underline{Z},\underline{Z}^*):\underline{Z}\in\M_n(\mathbb{C})^g\}.
    \]
\end{defn}
The real points inherit both Euclidean and Zariski topology from \(\M_n(\mathbb{C})^g\times\M_n(\mathbb{C})^g\).
This choice of notation will become clear in the next section. 

\begin{defn}
    The \emph{real free locus} of a noncommutative polynomial \(f\in\M_d(\mathbb{C})\otimes_\mathbb{C}\mathbb{C}\mon\) is defined to be 
    \[\mathscr{Z}^{\text{re}}(f)=\bigcup_{n\in\mathbb{N}}\mathscr{Z}_n^{\text{re}}(f),\quad\text{where}\quad\mathscr{Z}_n^{\text{re}}(f)=\{(\underline{X},\underline{X}^*)\in \M_n(\mathbb{C})^g\times \M_n(\mathbb{C})^g : \det f(\underline{X},\underline{X}^*)=0\}.\]
\end{defn}
Note that \(\mathscr{Z}_n^{\text{re}}(0)=(\M_n(\mathbb{C})^g\times\M_n(\mathbb{C})^g)(\mathbb{R})\).

\begin{defn}\label{semialg}
    For a self-adjoint matrix polynomial \(f\in \M_d(\mathbb{C})\otimes_\mathbb{C}\mathbb{C}\mon\), we define the \textit{positivity domain} of \(f\) to be the set 
    \[
        \mathcal{D}(f)=\bigcup_{n\in\mathbb{N}}\mathcal{D}_n(f),\quad\text{where}\quad\mathcal{D}_n(f)=\{(\underline{X},\underline{X}^*)\in \M_n(\mathbb{C})^g\times \M_n(\mathbb{C})^g : f(\underline{X},\underline{X}^*)\geq0\},
    \]
    where \(\geq\) denotes the Loewner order on \(\M_d(\mathbb{C})\otimes_\mathbb{C} \M_n(\mathbb{C})\cong\M_{dn}(\mathbb{C})\).
    The \emph{\(n\)-th level} \(\mathcal{D}_n(f)\) is a subset of \((\M_n(\mathbb{C})^g\times\M_n(\mathbb{C})^g)(\mathbb{R})\subset\M_n(\mathbb{C})^g\times\M_n(\mathbb{C})^g\), and inherits the corresponding subset topologies (both Euclidean and Zariski).
    We refer to the Euclidean boundary of \(\mathcal{D}_n(f)\) by \(\partial\mathcal{D}_n(f)\), and the Euclidean interior of \(\mathcal{D}_n(f)\) by \(\interior\mathcal{D}_n(f)\).
\end{defn}

We make the observation that if \(f\) is positive definite at a point \((\underline{X},\underline{X}^*)\), then this point is in the interior of the positivity domain, since positive definiteness is an open condition. 
That is,
\[
    \{(\underline{X},\underline{X}^*)\in\M_n(\mathbb{C})^g\times\M_n(\mathbb{C})^g: f(\underline{X},\underline{X}^*)\text{ is positive definite}\} \subset \interior\mathcal{D}_n(f),\quad\forall n\in\mathbb{N}.
\]
The converse is not necessarily true. 
\begin{exa}
    The first of the following two examples demonstrates that the interior of the positivity domain can be strictly larger than the positive definite points, and the second example shows that a positivity domain can have nonempty interior while also having no positive definite points.
    \begin{enumerate}
        \item[(i)] Consider the self-adjoint matrix polynomial \(f= (x_1+y_1)^2\in\mathbb{C}\Langle x_1,y_1\Rangle\).
        Its positivity domain is \(\mathcal{D}_n(f)=\{(Z,Z^*):Z\in\M_n(\mathbb{C})\}\), where \((0,0)\in\interior\mathcal{D}_n(f)\) but \(f(0,0)=0\) is not positive definite.
        \item[(ii)] Consider the self-adjoint matrix polynomial 
        \[
            \begin{pmatrix} (x_1+y_1)^2 & 0 \\ 0 & 0\end{pmatrix}\in\M_2(\mathbb{C}\Langle x_1,y_1\Rangle).
        \]
        Its positivity domain is \(\{(Z,Z^*):Z\in\M_n(\mathbb{C})\}\) but it is not positive definite anywhere.
    \end{enumerate}
\end{exa}

\begin{defn}
    Let \(f\in\M_d(\mathbb{C}\mon)\) be a matrix polynomial. 
    We say that \(f\) is \emph{positivity-minimal} if whenever \(\mathcal{D}(g)=\mathcal{D}(f)\), the size of \(g\) is at least \(d\).
\end{defn}

The following proposition shows that fullness is a sufficient condition for existence of positive definite points to be equivalent to having nonempty interior.

\begin{prop}\label{fullpos}
    Let \(f\in \M_d(\mathbb{C})\otimes_\mathbb{C}\mathbb{C}\mon\) be a full matrix polynomial.
    Then \(\mathcal{D}_n(f)\) has nonempty interior if and only if there exists \(\underline{Y}\in\M_n(\mathbb{C})^g\) for which \(f(\underline{Y},\underline{Y}^*)\) is positive definite.
\end{prop}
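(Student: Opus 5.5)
The reverse implication is the easy direction and I would do it first. If \(f(\underline{Y},\underline{Y}^*)\) is positive definite, then by continuity of evaluation and openness of the positive definite cone in \(\M_{dn}(\mathbb{C})\), some Euclidean neighbourhood of \((\underline{Y},\underline{Y}^*)\) in the real points lies inside \(\mathcal{D}_n(f)\). This is exactly the observation made just before the proposition.

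For the forward implication, suppose \(\mathcal{D}_n(f)\) contains a nonempty open set \(\Omega\) of real points \((\underline{Z},\underline{Z}^*)\). I would argue in three steps.

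\textbf{Step 1: reduce to nonvanishing of a determinant on \(\Omega\).} At every point of \(\Omega\) the matrix \(f\) is positive semidefinite. So it suffices to find a point of \(\Omega\) at which \(\det f(\underline{Z},\underline{Z}^*)\neq 0\); a positive semidefinite invertible matrix is positive definite. Consider
\[
    p(\underline{Z})\defeq\det f(\underline{Z},\underline{Z}^*).
\]
Writing each entry of \(\underline{Z}\) in real and imaginary parts, \(p\) is a real polynomial in \(2gn^2\) real variables. The set of real points is identified with \(\M_n(\mathbb{C})^g\cong\mathbb{R}^{2gn^2}\), and \(\Omega\) is a nonempty open subset of it. A real polynomial vanishing on a nonempty open subset of \(\mathbb{R}^{2gn^2}\) vanishes identically. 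It therefore suffices to show that \(p\) is not identically zero on the real points.

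\textbf{Step 2: show \(p\) is not identically zero on the real points.} This is the main obstacle, and it is where fullness enters. By \cite[Lemma 2.2]{HKV20}, fullness of \(f\) gives some level \(m\) and some \((\underline{X},\underline{Y})\in\M_m(\mathbb{C})^{2g}\) with \(\det f(\underline{X},\underline{Y})\neq0\). Two difficulties remain: this point need not be of the form \((\underline{Z},\underline{Z}^*)\), and \(m\) need not equal \(n\).

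\emph{Passing to real points.} The map \((\underline{Z},\underline{W})\mapsto\det f(\underline{Z},\underline{W})\) is a complex polynomial on \(\M_m(\mathbb{C})^{2g}\). Change variables to \(\underline{Z}=\underline{A}+i\underline{B}\) and \(\underline{W}=\underline{A}'+i\underline{B}'\). The real points are then the totally real subspace \(\underline{A}'=\underline{A}^{T}\), \(\underline{B}'=-\underline{B}^{T}\), whose real dimension equals the complex dimension of the ambient space. A holomorphic polynomial vanishing on such a totally real subspace vanishes identically, since it is determined by its restriction there. Hence \(\det f\) is nonzero at some real point of level \(m\).

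\emph{Passing to level \(n\).} To move from level \(m\) to the given \(n\) I would use direct sums. If \(\det f\) is nonzero at a real point of level \(m\), it is nonzero at a real point of every level \(km\), by taking block-diagonal copies. The standard fact (also from \cite{HKV20}) is that a full polynomial is in fact invertible generically at every sufficiently large level. If \(n\) is too small this can fail: for example \(f=x_1y_1-y_1x_1\) is full but vanishes identically at \(n=1\). I expect that either an implicit largeness of \(n\) is needed here, or that nonempty interior itself supplies the invertibility.

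\textbf{Step 3: use the interior when the determinant vanishes.} So I would try the following alternative. Suppose \(p\equiv0\) at level \(n\). Then every point of \(\Omega\) has a nontrivial kernel, while \(f\geq0\) throughout the open set \(\Omega\). I would attempt to derive a contradiction with fullness in that situation. If that fails, I would state the proof for those \(n\) at which \(\det f(\cdot,\cdot^*)\not\equiv0\), which fullness guarantees for all large \(n\).
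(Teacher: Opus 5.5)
Your Steps 1 and 2 are correct and follow the paper's route. A nonempty Euclidean-open subset \(\Omega\) of the real points is Zariski dense in them, so \(\det f(\underline{Z},\underline{Z}^*)\) cannot vanish on \(\Omega\) unless it vanishes at every real point of level \(n\). Your totally-real argument shows that \(\det f\) vanishes on all real points of level \(n\) only if it vanishes on all of \(\M_n(\mathbb{C})^g\times\M_n(\mathbb{C})^g\). The paper skips that step entirely.

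The gap is Step 3, and it cannot be closed, because your own example refutes the statement as written. Take \(f=x_1y_1-y_1x_1\), which is self-adjoint and nonzero, hence full since it is \(1\times 1\). Then \(f(z,\bar z)=0\) for every \(z\in\mathbb{C}\), so \(\mathcal{D}_1(f)\) is all real points of level \(1\) and has nonempty interior. Yet \(\trace f(X,X^*)=\trace(XX^*-X^*X)=0\), so \(f\) is positive definite at no point of any level. Thus nonempty interior does not force invertibility, and no contradiction with fullness is available once \(\det f(\cdot,\cdot^*)\equiv 0\) at level \(n\). Nor does a level bound independent of \(f\) help: for \(g\geq n\), the standard polynomial \(s_{2n}(x_1,y_1,\ldots,x_n,y_n)\) is self-adjoint and nonzero, and it vanishes identically at level \(n\) by Amitsur--Levitzki.

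The proposition needs an extra hypothesis: \(\mathscr{Z}_n^{\text{re}}(f)\) must be a proper subset of the real points of level \(n\). By your Step 2 this is equivalent to \(\det f\not\equiv 0\) at level \(n\), which fullness guarantees only for sufficiently large \(n\). The paper's proof uses exactly this hypothesis without stating it, in the line ``By assumption, \((\underline{X},\underline{X}^*)\notin\mathscr{Z}_n^{\text{re}}(f)\)''. With that hypothesis added, your Steps 1 and 2 already form a complete proof. You should discard Step 3 and state the result in the corrected form rather than leave an attempt that cannot succeed.
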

\begin{proof}
    Write \(U\) for the interior of \(\mathcal{D}_n(f)\subset(\M_n(\mathbb{C})^g\times\M_n(\mathbb{C})^g))(\mathbb{R})\).
    If \(f(\underline{Y},\underline{Y}^*)\) is positive definite for some \(\underline{Y}\in\M_n(\mathbb{C})^g\), then \((\underline{Y},
    \underline{Y}^*)\in U\) since positive definiteness is an open condition. 
    On the other hand, if \(U\neq\varnothing\) and the polynomial \(f_n=\det \restr{f}{\M_n(\mathbb{C})^g\times\M_n(\mathbb{C})^g}\) vanishes on \(U\), then 
    \[
        U \subset V(f_n)\cap(\M_n(\mathbb{C})^g\times\M_n(\mathbb{C})^g)(\mathbb{R})
    \] 
    where \(V(f_n)\) is the vanishing set of \(f_n\).
    Nonempty Euclidean open sets are Zariski dense, so the Zariski closure of \(U\) is \((\M_n(\mathbb{C})^g\times\M_n(\mathbb{C})^g)(\mathbb{R})\).
    Consequently, since \(V(f_n)\cap(\M_n(\mathbb{C})^g\times\M_n(\mathbb{C})^g)(\mathbb{R})\) is Zariski closed, then
    \[
        (\M_n(\mathbb{C})^g\times\M_n(\mathbb{C})^g)(\mathbb{R}) = V(f_n)\cap(\M_n(\mathbb{C})^g\times\M_n(\mathbb{C})^g)(\mathbb{R}) = \mathscr{Z}_n^{\text{re}}(f).
    \]
    By assumption, \((\underline{X},\underline{X}^*)\notin\mathscr{Z}_n^{\text{re}}(f)\) so we have a contradiction. 
    It follows that \(f_n\) does not vanish on all of \(U\), i.e., there exists \((\underline{Y},\underline{Y}^*)\in U\) for which \(f(\underline{Y},\underline{Y}^*)\) is nonsingular. 
    In other words, \(f(\underline{Y},\underline{Y}^*)\) is positive definite.
\end{proof}

It follows that, for full matrix polynomials \(f\), nonempty interiors propagate, i.e., if \(\interior\mathcal{D}_m(f)\neq\varnothing\) then \(\interior\mathcal{D}_{km}(f)\neq\varnothing\) for all \(k\), for all \(m\in n\mathbb{Z}^+\).
In particular if \(f(\underline{x},\underline{x}^*)\neq0\) for some \(\underline{x}\in\mathbb{C}^g\) and \(\interior\mathcal{D}_1(f)\neq\varnothing\), then \(\mathcal{D}_n(f)\) has nonempty interior for all \(n\).

The \emph{signature} of a Hermitian invertible matrix is a pair \((a,b)\in\mathbb{N}^2\) where \(a\) is the number of positive eigenvalues of the matrix, and \(b\) is the number of negative eigenvalues of the matrix.

\begin{prop}\label{pbdrylocus}
    Let \(f\in\M_d(\mathbb{C}\mon)\) be self-adjoint.
    Then \(\partial\mathcal{D}_n(f)\subset\mathscr{Z}_n^{\text{re}}(f)\) for all \(n\).
\end{prop}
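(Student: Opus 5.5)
The plan is to use continuity of \(\underline{Z}\mapsto f(\underline{Z},\underline{Z}^*)\) on the real points of level \(n\), together with the fact that positive definiteness is an open condition. The proof splits according to whether a boundary point lies in \(\mathcal{D}_n(f)\) or not.

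Let \((\underline{X},\underline{X}^*)\in\partial\mathcal{D}_n(f)\). First I would note that \(\mathcal{D}_n(f)\) is closed in \((\M_n(\mathbb{C})^g\times\M_n(\mathbb{C})^g)(\mathbb{R})\). Indeed, \(\underline{Z}\mapsto f(\underline{Z},\underline{Z}^*)\) is a polynomial, hence continuous, map from \(\M_n(\mathbb{C})^g\) into the Hermitian matrices of size \(dn\). The positive semidefinite cone is closed, being the intersection over \(v\) of the closed sets \(\{H:v^*Hv\geq0\}\). Since the real points are parametrised homeomorphically by \(\underline{Z}\), the preimage \(\mathcal{D}_n(f)\) is closed. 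Hence the boundary point satisfies \((\underline{X},\underline{X}^*)\in\mathcal{D}_n(f)\), that is, \(f(\underline{X},\underline{X}^*)\geq0\).

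Next I would argue by contradiction. Suppose \(\det f(\underline{X},\underline{X}^*)\neq0\). Then \(f(\underline{X},\underline{X}^*)\) is positive semidefinite and invertible, hence positive definite. Its smallest eigenvalue \(\lambda>0\) depends continuously on the matrix, for example because eigenvalues are \(1\)-Lipschitz in operator norm by Weyl's inequality. So there is a Euclidean neighbourhood of \((\underline{X},\underline{X}^*)\) in the real points on which \(f\) stays positive definite. This is exactly the observation made before the example following Definition \ref{semialg}. The point would then lie in \(\interior\mathcal{D}_n(f)\), which contradicts its membership in \(\partial\mathcal{D}_n(f)\), since the boundary of a closed set is disjoint from its interior. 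Therefore \(\det f(\underline{X},\underline{X}^*)=0\), i.e.\ \((\underline{X},\underline{X}^*)\in\mathscr{Z}_n^{\text{re}}(f)\).

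I expect no serious obstacle. The only point needing care is to take the boundary relative to the real points \((\M_n(\mathbb{C})^g\times\M_n(\mathbb{C})^g)(\mathbb{R})\), as in Definition \ref{semialg}, rather than relative to the ambient space. Inside the full ambient space the real points have empty interior, so every point of \(\mathcal{D}_n(f)\) would count as a boundary point and the statement would fail. With the relative topology, closedness and the openness of positive definiteness settle the claim.
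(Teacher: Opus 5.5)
Your proof is correct, but it takes a different route from the paper's. You first show that \(\mathcal{D}_n(f)\) is closed in the real points, being the preimage of the closed positive semidefinite cone. So a boundary point already satisfies \(f(\underline{X},\underline{X}^*)\geq0\), and invertibility would force positive definiteness, putting the point in \(\interior\mathcal{D}_n(f)\), which is impossible.

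The paper never uses closedness of \(\mathcal{D}_n(f)\). It assumes \(\det f(\underline{X},\underline{X}^*)\neq0\) and notes that a boundary point is not interior, so \(f(\underline{X},\underline{X}^*)\) is not positive definite and hence has a negative eigenvalue. It then takes a connected neighbourhood, intersected with the real points, on which the determinant does not vanish. It invokes the result from \cite{S66} that signature is constant on connected components of the invertible Hermitian matrices. The negative eigenvalue therefore persists throughout the neighbourhood, making it disjoint from \(\mathcal{D}_n(f)\) and contradicting that the point lies in its closure.

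Both arguments rest on continuity of eigenvalues. Yours needs only that the positive semidefinite cone is closed and the positive definite cone is open. The paper's needs three extra ingredients:
\begin{itemize}
\item the connected-components result;
\item a preliminary case split on whether \(\partial\mathcal{D}_n(f)\) is empty or \(\mathscr{Z}_n^{\text{re}}(f)\) is the whole space;
\item an unstated check that the neighbourhood \(U\), intersected with the real points, is still connected (true if \(U\) is a ball).
\end{itemize}
What the paper's version buys is uniformity: the same signature-constancy principle drives Proposition \ref{unsigunit} and the notion of being unsignatured.

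Your closing remark is also right: the boundary must be taken relative to the real points, as in Definition \ref{semialg}, or the statement fails.
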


\begin{proof}
    If \(\partial\mathcal{D}_n(f)\) is empty or \(\mathscr{Z}_n^{\text{re}}(f)=\left(\M_n(\mathbb{C})^g\times\M_n(\mathbb{C})^g\right)(\mathbb{R})\), we are done. 
    Suppose otherwise, and suppose toward contradiction that there is \((\underline{X},\underline{X}^*)\in \partial\mathcal{D}_n(f)\) but \(\det f(\underline{X},\underline{X}^*)\neq0\). 
    Let \(f_n\) be the polynomial \(\det f(-,-)\),
    so that \((\underline{X},\underline{X}^*)\) is in the complement of the variety in \(\M_n(\mathbb{C})^g\times\M_n(\mathbb{C})^g\) carved out by \(f_n\).
    This is a nonempty open set (by our assumption that \(\mathscr{Z}_n^{\text{re}}(f)\) is not the whole space), so there is an open connected neighbourhood \(U\) of \((\underline{X},\underline{X}^*)\) in \(\M_n(\mathbb{C})^g\times\M_n(\mathbb{C})^g\) on which \(f_n\) does not vanish.
    Let \(U'= U\cap\left(\M_n(\mathbb{C})^g\times\M_n(\mathbb{C})^g\right)(\mathbb{R})\), an open set in the subspace topology. 
    Consider the continuous map 
    \[
        U'\to\{\text{Invertible Hermitian matrices}\}:(\underline{Y},\underline{Y}^*)\mapsto f(\underline{Y},\underline{Y}^*).
    \]
    Since \(U'\) is connected, then its image under the above continuous map is connected. 
    By \cite[Corollary 1]{S66}, the space of invertible Hermitian matrices has connected components indexed by signature, with constant signature on each component. 
    So \(f(\underline{Y},\underline{Y}^*)\) has the same signature for all \((\underline{Y},\underline{Y}^*)\in U'\).
    Since \((\underline{X},\underline{X}^*)\) is not in the interior of \(\mathcal{D}_n(f)\) then \(f(\underline{X},\underline{X}^*)\) cannot be positive definite.
    So \(f(\underline{Y},\underline{Y}^*)\) is not positive semidefinite for all \((\underline{Y},\underline{Y}^*)\in U'\).
    Consequently, \(U'\cap\mathcal{D}_n(f)=\varnothing\), contradicting our assumption that \((\underline{X},\underline{X}^*)\) lies on the boundary of \(\mathcal{D}_n(f)\).
\end{proof}

\begin{defn}\label{unsig}
    Let \(f\in \M_d(\mathbb{C})\otimes_\mathbb{C}\mathbb{C}\mon\) be self-adjoint.
    We say \(f\) is \emph{unsignatured} (or \emph{unsignatured at level \(n\)}) if there exists \(n\in\mathbb{N}\) and \(\underline{X},\underline{Y}\in \M_n(\mathbb{C})^g\) for which \(f(\underline{X},\underline{X}^*)\) and \(f(\underline{Y},\underline{Y}^*)\) are invertible and have different signatures. 
\end{defn}

If \(f\) is unsignatured at level \(n\), there exists a tuple \(\underline{Y}\in \M_n(\mathbb{C})^g\) at which \(f(\underline{Y},\underline{Y}^*)\) is not positive semidefinite, so that \(\mathcal{D}_n(f)\) is a proper subset of \((\M_n(\mathbb{C})^g\times\M_n(\mathbb{C})^g)(\mathbb{R})\).
From \cite[Lemma 2.2]{HKV20}, unsignatured polynomials are full.

\begin{prop}\label{unsigunit}
    If \(f\) is unsignatured, then \(f\) is not a unit.
\end{prop}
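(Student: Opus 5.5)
We argue by contraposition: assume \(f\in\M_d(\mathbb{C}\mon)\) is a unit and show that \(f\) has a constant signature at every level, so it cannot be unsignatured. Let \(f\) be a unit, so there is \(h\in\M_d(\mathbb{C}\mon)\) with \(fh=hf=I_d\). Fix a level \(n\). Since evaluation \(\pi_{(\underline{X},\underline{X}^*)}\) is a \(\mathbb{C}\)-algebra morphism, we get \(f(\underline{X},\underline{X}^*)\,h(\underline{X},\underline{X}^*)=I_{dn}\) for every \(\underline{X}\in\M_n(\mathbb{C})^g\). This is the same determinant argument used in the proof that stably associated polynomials have equal free loci. Hence \(\det f(\underline{X},\underline{X}^*)\neq0\) everywhere, i.e. \(\mathscr{Z}_n^{\text{re}}(f)=\varnothing\).

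Next we use connectedness. The set of real points \((\M_n(\mathbb{C})^g\times\M_n(\mathbb{C})^g)(\mathbb{R})\) is homeomorphic to \(\M_n(\mathbb{C})^g\) via \(\underline{Z}\mapsto(\underline{Z},\underline{Z}^*)\), so it is path connected. The map
\[
\underline{Z}\mapsto f(\underline{Z},\underline{Z}^*)
\]
is continuous, being polynomial in the real and imaginary parts of the entries. It lands in the invertible Hermitian \(dn\times dn\) matrices, since \(f\) is self-adjoint (this is assumed in the definition of unsignatured) and by the previous paragraph. By \cite[Corollary 1]{S66}, already invoked in Proposition \ref{pbdrylocus}, the connected components of this space are indexed by signature. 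The image of a connected set therefore lies in one component, so the signature of \(f(\underline{X},\underline{X}^*)\) is the same for all \(\underline{X}\in\M_n(\mathbb{C})^g\).

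This holds at every level \(n\), so no two invertible evaluations at the same level have different signatures. Hence \(f\) is not unsignatured at any level, contradicting the hypothesis, and \(f\) is not a unit.

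A simpler alternative is also available: a unit of \(\M_d(\mathbb{C}\mon)\) evaluated at the points of the path \(t\underline{X}\) stays invertible, which gives a homotopy to the constant \(f(0,0)\). We would not need it.

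The only step requiring care is checking that the continuity and connectedness argument is legitimate, namely that the domain of real points is connected and the target is exactly the invertible Hermitian matrices. Both are immediate here, so we expect no real obstacle.
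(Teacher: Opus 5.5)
Your proof is correct and follows essentially the same route as the paper: every evaluation of a unit is invertible, the domain is connected, and the invertible Hermitian matrices have connected components indexed by signature, so the signature cannot change. The only differences are cosmetic. The paper restricts to the line segment between the two points of differing signature, while you use connectedness of all of \(\M_n(\mathbb{C})^g\). You also justify invertibility of the evaluations explicitly via the inverse \(h\), which the paper leaves implicit.
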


\begin{proof}
    Suppose \(f\) were a unit, then \(f(\underline{X},\underline{X}^*)\) is invertible for all \(\underline{X}\in\M_n(\mathbb{C})^g\). 
    Since \(f\) is unsignatured, there exist \(\underline{Y},\underline{Z}\in\M_n(\mathbb{C})^g\) for which \(f(\underline{Y},\underline{Y}^*)\) and \(f(\underline{Z},\underline{Z}^*)\) are invertible with different signatures. 
    Let \(C=\left\{(1-t)\underline{Y}+t\underline{Z}:t\in[0,1]\right\}\), the line segment between \(\underline{Y}\) and \(\underline{Z}\). 
    This is a connected set in \(\M_n(\mathbb{C})^g\), so it has a connected image under the continuous map 
    \[
        \M_n(\mathbb{C})^g\to\{\text{Invertible Hermitian matrices}\}:\underline{X}\mapsto f(\underline{X},\underline{X}^*).
    \]
    By \cite[Corollary 1]{S66}, the space of invertible Hermitian matrices has \(n+1\) connected components, indexed by signature. 
    In particular, signature is locally constant on connected components; 
    so all elements of \(f(C)\) have the same signature. 
    This is a contradiction: \(f(\underline{Y},\underline{Y}^*)\) and \(f(\underline{Z},\underline{Z}^*)\) do not share the same signature.
\end{proof}

So unsignatured matrix polynomials are full nonunits. 
Unsignatured polynomials have well-behaved positivity domains, for if \(f\) were not unsignatured then either \(\mathcal{D}_n(f)\) is the complement of \(\mathscr{Z}_n^{\text{re}}(f)\) or \(\mathcal{D}_n(f)\) is contained in \(\mathscr{Z}^{\text{re}}_n(f)\) with empty interior, for each \(n\).
In either case, the dimension of the ambient space can be reduced. 
%HOW DO I REDUCE THE DIMENSION HERE? WHAT IS THE PROCEDURE?
We end this section by covering some convenient properties of unsignatured polynomials.

\begin{prop}\label{unsigpropagate}
    If \(f\) is unsignatured at level \(n\), then \(f\) is also unsignatured at level \(kn\) for each \(k\in\mathbb{N}\).
\end{prop}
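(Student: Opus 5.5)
The plan is to build the witnesses at level \(kn\) by direct sums of the level-\(n\) witnesses. Suppose \(f\) is unsignatured at level \(n\). Then there are \(\underline{X},\underline{Y}\in\M_n(\mathbb{C})^g\) with \(f(\underline{X},\underline{X}^*)\) and \(f(\underline{Y},\underline{Y}^*)\) invertible, with signatures \((a,b)\neq(a',b')\). For \(k\in\mathbb{N}\), take the block-diagonal tuples
\[
    \underline{X}^{(k)} = \underline{X}\oplus\cdots\oplus\underline{X} = I_k\otimes\underline{X},\qquad \underline{Y}^{(k)} = I_k\otimes\underline{Y}
\]
in \(\M_{kn}(\mathbb{C})^g\), where each entry is the \(k\)-fold direct sum.

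The key step is compatibility of evaluation with direct sums. Since evaluation is a \(\mathbb{C}\)-algebra morphism applied entrywise, for every word \(w\in\mon\) we have \(w(\underline{X}\oplus\underline{X}',(\underline{X}\oplus\underline{X}')^*) = w(\underline{X},\underline{X}^*)\oplus w(\underline{X}',\underline{X}'^*)\). This uses \((\underline{X}\oplus\underline{X}')^*=\underline{X}^*\oplus\underline{X}'^*\). Hence \(f(\underline{X}^{(k)},\underline{X}^{(k)*})\) is unitarily similar to \(\bigoplus_{i=1}^k f(\underline{X},\underline{X}^*)\), the similarity being the canonical shuffle permutation that identifies \(\M_d(\mathbb{C})\otimes\M_{kn}(\mathbb{C})\) with \(k\) copies of \(\M_d(\mathbb{C})\otimes\M_n(\mathbb{C})\). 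This permutation is unitary, so it preserves invertibility, Hermitian-ness and signature.

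It follows that \(f(\underline{X}^{(k)},\underline{X}^{(k)*})\) is invertible with signature \((ka,kb)\). Likewise \(f(\underline{Y}^{(k)},\underline{Y}^{(k)*})\) is invertible with signature \((ka',kb')\). Since \(k\geq1\) and \((a,b)\neq(a',b')\), these signatures differ, so \(f\) is unsignatured at level \(kn\).

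The only point needing care is the bookkeeping for the Kronecker identification, namely that the permutation is unitary rather than merely invertible. Even that is not essential: by Sylvester's law of inertia, signature is preserved under any congruence \(P^*AP\) with \(P\) invertible, and a permutation matrix \(P\) gives exactly such a congruence.
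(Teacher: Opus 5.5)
Your proposal is correct and follows the paper's proof: the paper also passes to \(I_k\otimes\underline{X}\) and \(I_k\otimes\underline{Y}\) and observes that the evaluations remain invertible with different signatures. You additionally spell out what the paper leaves implicit. The evaluation is permutation-similar to a \(k\)-fold direct sum, so the signatures become \((ka,kb)\neq(ka',kb')\).
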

\begin{proof}
    Suppose \(\underline{X},\underline{Y}\in\M_n(\mathbb{C})^g\) with \(f(\underline{X},\underline{X}^*)\) and \(f(\underline{Y},\underline{Y}^*)\) invertible with different signature. 
    Then \(f(\underline{I_k}\otimes \underline{X},\underline{I_k}\otimes \underline{X}^*)\) and \(f(\underline{I_k}\otimes \underline{Y},\underline{I_k}\otimes \underline{Y}^*)\) are both invertible and have different signature as well. 
\end{proof}

\begin{cor}\label{intpropagate}
    If \(f\in\M_d(\mathbb{C}\mon)\) is unsignatured at level \(n\) and \(\mathcal{D}_n(f)\) has nonempty interior, then \(\mathcal{D}_{mn}(f)\) has nonempty interior for all \(m\in \mathbb{N}_+\).
\end{cor}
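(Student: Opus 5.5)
The plan is to combine Proposition~\ref{unsigpropagate} with Proposition~\ref{fullpos}, using the ampliation trick $\underline{X}\mapsto \underline{I_m}\otimes\underline{X}$. Since $f$ is unsignatured, it is full, as noted after Definition~\ref{unsig} (via \cite[Lemma 2.2]{HKV20}). So Proposition~\ref{fullpos} applies at level $n$: because $\mathcal{D}_n(f)$ has nonempty interior, there is some $\underline{Y}\in\M_n(\mathbb{C})^g$ with $f(\underline{Y},\underline{Y}^*)$ positive definite.

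Next I pass to level $mn$. Set $\underline{Y}'=\underline{I_m}\otimes\underline{Y}\in\M_{mn}(\mathbb{C})^g$; note that $(\underline{I_m}\otimes\underline{Y})^*=\underline{I_m}\otimes\underline{Y}^*$. Evaluation commutes with ampliation, so $f(\underline{Y}',\underline{Y}'^*)$ is unitarily similar, via the canonical shuffle of Kronecker factors, to $I_m\otimes f(\underline{Y},\underline{Y}^*)$. That matrix is a direct sum of $m$ copies of a positive definite matrix, hence positive definite.

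Positive definiteness is an open condition and evaluation is continuous. Therefore $(\underline{Y}',\underline{Y}'^*)$ lies in $\interior\mathcal{D}_{mn}(f)$, by the observation preceding the first example. This already gives the claim. Alternatively, one can invoke Proposition~\ref{fullpos} at level $mn$ directly, since $f$ is full; Proposition~\ref{unsigpropagate} only guarantees that the boundary picture at level $mn$ stays nondegenerate and is not strictly needed.

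There is no real obstacle here. The only point requiring care is the bookkeeping identification $\M_d(\mathbb{C})\otimes\M_{mn}(\mathbb{C})\cong\M_{dmn}(\mathbb{C})$, under which $f(\underline{I_m}\otimes\underline{Y},\dots)$ matches $I_m\otimes f(\underline{Y},\dots)$ up to a permutation similarity. A permutation similarity preserves the Loewner order, so positive definiteness transfers.
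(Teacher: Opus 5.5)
Your argument is correct and is essentially the paper's proof: obtain a positive definite point at level \(n\) from Proposition~\ref{fullpos}, then ampliate to \(I_m\otimes\underline{X}\). The only difference is that you conclude via openness of positive definiteness rather than citing Proposition~\ref{fullpos} a second time, which is just its trivial direction. One small refinement: fullness alone does not give what the proof of Proposition~\ref{fullpos} actually uses at level \(n\), namely a level-\(n\) point where \(f\) is invertible (for instance, \(xy-yx\) is full but vanishes identically at level \(1\)); here that point is supplied directly by \(f\) being unsignatured at level \(n\).
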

\begin{proof}
    By Proposition \ref{fullpos} there exists \(\underline{X}\in\M_n(\mathbb{C})^g\) for which \(f(\underline{X},\underline{X}^*)\) is positive definite. 
    Consider \(I_m\otimes \underline{X}\in\M_{mn}(\mathbb{C})^g\) and apply Proposition \ref{fullpos} again.
\end{proof}

\begin{prop}\label{CIunit}
    If \(f\in\M_d(\mathbb{C}\mon)\) is an unsignatured atom and \(\mathcal{D}_1(f)\) has nonempty interior, then there exists an unsignatured coefficient-irreducible atom \(f'\in\M_{d'}(\mathbb{C}\mon)\) with \(d'\leq d\) for which \(\mathcal{D}(f') = \mathcal{D}(f)\).
    Further, there exists unitary \(U\in\M_d(\mathbb{C})\) and unit \(u\in\M_{d-d'}(\mathbb{C}\mon)\) so that \(U^*fU=f'\oplus u\).
    \end{prop}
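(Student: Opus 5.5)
Apply Proposition \ref{CIfact} to \(f\): there is a unitary \(U\in\M_d(\mathbb{C})\) and coefficient-irreducible \(f_1,\ldots,f_r\) with \(U^*fU\) block upper triangular with diagonal blocks \(f_1,\ldots,f_r\). Since \(f\) is self-adjoint, \(U^*fU\) is self-adjoint. A self-adjoint block upper triangular matrix is block diagonal, because each strictly upper block \(\star_{ij}\) equals \(\star_{ji}^*=0\). Hence \(U^*fU=f_1\oplus\cdots\oplus f_r\), and each \(f_j\) is self-adjoint. Conjugating by a constant unitary preserves atomicity, positivity domains and signatures, so we may work with \(g=f_1\oplus\cdots\oplus f_r\) instead of \(f\).

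Next we use atomicity. The blocks \(f_j\) may have different sizes, so Lemma \ref{diagatom} does not literally apply. However, its proof (via Lemma \ref{diagunit}, which also holds for block diagonal matrices with blocks of different sizes) adapts verbatim. We factor \(g=(f_1\oplus\cdots\oplus I\oplus\cdots\oplus f_r)(I\oplus\cdots\oplus f_j\oplus\cdots\oplus I)\). The block-diagonal unit lemma says a block diagonal matrix is a unit iff each block is: the \((j,j)\) block of \(BA=I=AB\) gives a two-sided inverse because off-diagonal blocks of \(A\) are zero. Hence exactly one block, say \(f_1\) after reordering by a permutation (which is unitary), is an atom and all others are units. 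Set \(f'=f_1\), of size \(d'\leq d\), and \(u=f_2\oplus\cdots\oplus f_r\), a unit in \(\M_{d-d'}(\mathbb{C}\mon)\). Absorbing the permutation into \(U\) gives \(U^*fU=f'\oplus u\). Then \(f'\) is a coefficient-irreducible atom.

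The main obstacle is to show \(\mathcal{D}(f')=\mathcal{D}(f)\) and that \(f'\) is unsignatured; this is where the hypothesis \(\interior\mathcal{D}_1(f)\neq\varnothing\) is used. Since \(u\) is a self-adjoint unit, \(u(\underline{X},\underline{X}^*)\) is invertible for every \(\underline{X}\in\M_n(\mathbb{C})^g\). Since \(\M_n(\mathbb{C})^g\) is connected, the signature argument of Proposition \ref{unsigunit} (citing \cite{S66}) shows that the signature of \(u\) is constant on each level \(n\). By Proposition \ref{fullpos}, applied to \(f\), which is full since unsignatured, there is a point \(\underline{x}\in\mathbb{C}^g\) with \(f(\underline{x},\underline{x}^*)>0\). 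Hence \(u(\underline{x},\underline{x}^*)>0\), so \(u\) is positive definite everywhere at level \(1\).

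For level \(n\) we compare with the point \(I_n\otimes\underline{x}\), where \(u(I_n\otimes\underline{x},\ldots)=I_n\otimes u(\underline{x},\underline{x}^*)>0\) up to the canonical shuffle. By constancy of signature on the connected set \(\M_n(\mathbb{C})^g\), \(u(\underline{X},\underline{X}^*)>0\) for all \(\underline{X}\) at all levels. Therefore \(f(\underline{X},\underline{X}^*)\geq0\) iff \(f'(\underline{X},\underline{X}^*)\geq0\), giving \(\mathcal{D}(f')=\mathcal{D}(f)\).

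For unsignaturedness, take \(\underline{X},\underline{Y}\) at level \(n\) where \(f\) is invertible with different signatures. Then \(f'\) is invertible at both points. The signature of \(f\) equals the signature of \(f'\) plus the constant \(((d-d')n,0)\), so the signatures of \(f'\) at the two points differ. Hence \(f'\) is unsignatured, completing the argument.
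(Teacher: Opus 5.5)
Your route is the paper's: Proposition~\ref{CIfact} plus self-adjointness gives $U^*fU=f_1\oplus\cdots\oplus f_r$. Atomicity then leaves exactly one atomic block and makes the others units. Units have constant signature on each level, the unit part is forced to be positive definite, and $\mathcal{D}(f')=\mathcal{D}(f)$ and unsignaturedness of $f'$ follow. You note that Lemma~\ref{diagatom} is stated only for blocks of equal size and re-prove it for blocks of different sizes. This is an improvement: the paper applies that lemma directly, although the $f_i$ need not all have the same size.

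One step is not valid as written: the claim that Proposition~\ref{fullpos} yields $\underline{x}\in\mathbb{C}^g$ with $f(\underline{x},\underline{x}^*)>0$. Fullness only guarantees that $\det f$ is not identically zero at some level, not necessarily at level $1$, and the proof of Proposition~\ref{fullpos} tacitly needs the latter. Here is a counterexample in one variable. Take $f=(x_1y_1-y_1x_1)\oplus 1$.
\begin{itemize}
\item It is an atom, being stably associated to $x_1y_1-y_1x_1$, which is an atom by a degree count.
\item It is unsignatured at level $3$: weighted cyclic shifts make $XX^*-X^*X$ equal to $2\oplus(-1)\oplus(-1)$ or to $(-2)\oplus 1\oplus 1$.
\item $\mathcal{D}_1(f)$ is all of level $1$.
\end{itemize}
Yet $f(z,\bar z)=0\oplus 1$ is never positive definite.

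The repair is immediate and uses only $\mathcal{D}_1(f)\neq\varnothing$. Take any $(\underline{x},\underline{x}^*)\in\mathcal{D}_1(f)$. Then $u(\underline{x},\underline{x}^*)$ is a diagonal block of the positive semidefinite matrix $(U^*fU)(\underline{x},\underline{x}^*)$, so it is positive semidefinite. It is also invertible because $u$ is a unit, hence it is positive definite. From there your argument via $I_n\otimes\underline{x}$ and constancy of signature goes through unchanged.

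The paper's own step ``$\mathcal{D}_n(f)$ has nonempty interior for each $n$'' fails on the same example at level $3$, where $\mathcal{D}_3(f)$ consists of the normal matrices. It is repaired the same way, since the argument only uses $\mathcal{D}_n(f)\neq\varnothing$.
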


\begin{proof}
    By Proposition \ref{CIfact} and using that \(f\) is self-adjoint, there exists coefficient-irreducible self-adjoint \(f_1,\ldots,f_r\) and a unitary \(U\in\M_d(\mathbb{C})\) so that 
    \[
        U^*fU = \begin{pmatrix}
            f_1 & 0 & \cdots & 0 \\
            0 & \ddots & \ddots & \vdots \\
            \vdots & \ddots & \ddots & 0 \\
            0 & \cdots & 0 & f_r
        \end{pmatrix}.
    \]
    Since \(f\) is an atom then \(U^*fU\) is an atom, so by Lemma \ref{diagatom}, exactly one of \(f_1,\ldots,f_r\) is an atom and the rest are units.
    Let \(f_j\) be the atom.
    By Proposition \ref{unsigunit}, each unit \(f_i\) (\(i\neq j\)) must be signatured, say with \(\alpha_i(n)\) many negative eigenvalues at each level \(n\).
    Since \(\mathcal{D}_1(f)\) has nonempty interior, \(\mathcal{D}_n(f)\) has nonempty interior for each \(n\).
    Then \(\alpha_i(n)=0\) for all \(i\neq j\), and so 
    \[
        \mathcal{D}_n(f)=\bigcap_{i\in\{1,\ldots,r\}}\mathcal{D}_n(f_i)=\mathcal{D}_n(f_j) \quad\forall n.
    \]
    So \(\mathcal{D}(f_j)=\mathcal{D}(f)\) where \(f_j\) is a coefficient-irreducible self-adjoint atom.
    Finally, \(f_j\) must be unsignatured since \(f\) is unsignatured and all \(f_i\), \(i\neq j\), are signatured.
\end{proof}

%%%%%%%%%%%%%%%%%%%%%%%%%%%%%%%%%%%%%%%%%%%%%%%%%%%%
% ALGEBRAIC GEOMETRY
%%%%%%%%%%%%%%%%%%%%%%%%%%%%%%%%%%%%%%%%%%%%%%%%%%%%
\section{\(\mathbb{R}\)-Varieties}\label{RVarieties}
Our aim in this section is to prove Theorem \ref{posdense}. 
To do this we need the theory developed in \cite[Chapter 2]{M20}. 
We will not provide the various definitions that come up in algebraic geometry (such as varieties, their dimensions, tangent spaces, etc.) 
These can be found in \cite{H77}.
Let \(\mathbb{A}^n\) refer to \(\mathbb{C}^n\).
We use \(\V(J)\) to denote the vanishing set of a subset of polynomials \(J\subset\mathbb{C}[x_1,\ldots,x_n]\), and \(\I(S)\) to denote the vanishing ideal of a subset of points \(S\subset \mathbb{A}^n\). 
We will also use \(V_\mathbb{R}(J)\) to denote the set of real points at which the polynomials in \(J\) vanish. 
The notation \(\mathcal{O}_X(U)\) will refer to the ring of regular functions on \(U\), where \(U\) is an open subset of \(X\).

We will refer to the dimension of a \(\K\)-variety \(X\) by \(\dim_\K(X)\).
By default, the absence of a subscript will imply that the field is \(\mathbb{C}\); that is, \(\dim\) refers to \(\dim_\mathbb{C}\). 
The dimension of a semialgebraic set (always over \(\mathbb{R}\)) will be denoted by \(\dim_{\text{Semi}}\).
We will call this the semialgebraic dimension.

\begin{defn}
    On \(\mathbb{A}^n\) we define a canonical involution map
    \[
        \sigma_n:\mathbb{A}^n\to\mathbb{A}^n:(x_1-a_1,\ldots,x_n-a_n)\mapsto(x_1-\overline{a_1},\ldots,x_n-\overline{a_n}).
    \]
\end{defn}
Notice that the set of real points \(\mathbb{R}^n\) sits inside of \(\mathbb{A}^n\) as the subset \(\{p\in\mathbb{A}^n:\sigma_n(p)=p\}\).
If \(f\in\mathbb{C}[x_1,\ldots,x_n]\) then \(\bar{f}=\sigma_1\circ f\circ \sigma_n\) if we consider \(f,\sigma_1,\sigma_n\) as functions on the closed points, where \(\bar{f}\in\mathbb{C}[x_1,\ldots,x_n]\) is the polynomial obtained from \(f\) by conjugating coefficients.

\begin{defn}
    If \(f\in\mathcal{O}_{\mathbb{A}^n}(U)\) for some open \(U\subset\mathbb{A}^n\), then the \emph{conjugate} of \(f\) is
    \(
        \bar{f} := \sigma_1\circ f\circ \sigma_n.
    \)
    For any subset \(X\subset\mathbb{A}^n\) we call the set \(\overline{X} := \{p\in\mathbb{A}^n:\sigma_n(p)\in X\}\) the \emph{conjugate of \(X\)}, and give it a Zariski topology via the subset topology in \(\mathbb{A}^n\).
\end{defn}

It is clear that, for any closed subset \(\V (f_1,\ldots,f_r) \subset \mathbb{A}^n\), the vanishing set of the conjugate functions is given by 
\(\V(\bar{f}_1,\ldots,\bar{f}_r)=\{p\in\mathbb{A}^n:\sigma_n(p)\in \V(f_1,\ldots,f_r)\}\); 
so \(\overline{\V(f_1,\ldots,f_r)}=\V(\bar{f}_1,\ldots,\bar{f}_r)\).
Hence \(\sigma_n\) is a continuous function on \(\mathbb{A}^n\); by involutivity, \(\sigma_n\) is a homeomorphism.

\begin{defn}
    Let \((X,\mathscr{O}_X)\) be a ringed space, where the functions map to \(\mathbb{C}\). 
    Then the \emph{anti-sheaf} \(\overline{\mathscr{O}_X}\) of \(\mathscr{O}_X\) is given by 
    \[
        \overline{\mathscr{O}_X}(U) = \{\sigma_1\circ f : f\in\mathscr{O}_X(U)\}.
    \]
\end{defn}
The anti-sheaf is indeed a sheaf of rings: multiplication happens pointwise in each ring, and gluability and uniqueness are inherited from \(\mathscr{O}_X\). 

\begin{defn}
    Let \((X,\mathscr{O}_X)\) be an algebraic \(\mathbb{C}\)-variety.
    The \emph{conjugate variety} of \(X\) is defined to be the ringed space \(\overline{X}:=(X,\overline{\mathscr{O}_X})\).
\end{defn}
If \(X\) is affine, its conjugate topological space \(\overline{X}\) is also affine and carries a structure sheaf obtained from restriction: \(\mathscr{O}_{\overline{X}}=\restr{\mathscr{O}_{\mathbb{A}^n}}{\overline{X}}\).
Then \(\sigma_n\) induces a ringed space isomorphism \((\overline{X},\mathscr{O}_{\overline{X}})\longrightarrow(X,\overline{\mathscr{O}_X})\).
In this way, we see that equipping the conjugate topological space of \(X\) with its restriction sheaf gives us the conjugate variety of \(X\). 
So in the affine case, conjugacy as a topological space is the same as conjugacy as a ringed space. 

\begin{defn}
    A map \(\varphi:X\to Y\) of varieties is \emph{anti-regular} if it is continuous and \(\sigma_1\circ f\circ\varphi\) is regular whenever \(f\) is regular.
\end{defn}

For example, \(\sigma_n\) on \(\mathbb{A}^n\) is anti-regular.
Generalising the maps \(\sigma_n\) to complex varieties, we say that an anti-regular involution \(\sigma\) on an algebraic \(\mathbb{C}\)-variety \(X\) is a \emph{real structure} on \(X\).
This construction is analogous to that of a \(C^*\)-algebra; in particular we will see that the fixed points of \(\sigma\) will form an important subspace of \(X\).
\begin{defn}
    A pair \((X,\sigma)\) where \(X\) is an algebraic \(\mathbb{C}\)-variety and \(\sigma\) a real structure on \(X\) is called an \emph{\(\mathbb{R}\)-variety}.
\end{defn}

Next we define morphisms of \(\mathbb{R}\)-varieties.
\begin{defn}
    A morphism of \(\mathbb{R}\)-varieties \((X,\sigma)\to(Y,\tau)\) is a morphism of varieties \(\varphi:X\to Y\) commuting with the real structures: 
    \[
        \begin{tikzcd}[row sep=huge, column sep=huge]
            X \arrow{r}{\sigma} \arrow[swap]{d}{\varphi} & X \arrow{d}{\varphi} \\
            Y \arrow{r}{\tau} & Y
        \end{tikzcd}
    \]
\end{defn}

These objects also have subobjects.
\begin{defn}
    A pair \((Y,\tau)\) is an \(\mathbb{R}\)-subvariety of \((X,\sigma)\) if \(Y\) is a subvariety of \(X\) and \(\tau=\restr{\sigma}{Y}\).
\end{defn}
If \(U\subset X\) open then \(\sigma(U)\) is open, so \(U\cap\sigma(U)\) is \(\sigma\)-invariant. 
Then \(\restr{\sigma}{U\cap\sigma(U)}\) is an involution on \(U\cap\sigma(U)\), and \((U\cap\sigma(U),\restr{\sigma}{U\cap\sigma(U)})\) is an \(\mathbb{R}\)-subvariety of \((X,\sigma)\).
In particular when \(X\) is irreducible and \(U\) is nonempty, \(U\cap\sigma(U)\) is a nonempty \(\mathbb{R}\)-subvariety.
\begin{defn}
    The \emph{real locus} of an \(\mathbb{R}\)-variety \((X,\sigma)\) is the set of points fixed by \(\sigma\):
    \[
        X(\mathbb{R})\defeq\{p\in X:\sigma(p)=p\}.
    \]
\end{defn}
Points in the real locus are called \emph{real points} of \(X\).

The map \(^*:\M_n(\mathbb{C})^g\to \M_n(\mathbb{C})^g:X=(X_1,\ldots,X_g)\mapsto X^*=(X_1^*,\ldots,X_g^*)\) defines a real structure on \(\M_n(\mathbb{C})^g\), with real points the \(g\)-tuples of \(n\times n\) Hermitian matrices over \(\mathbb{C}\).
The following example informs how these definitions interact with our free loci.
\begin{exa}\label{flrs}
    Let \(f\in\M_d(\mathbb{C})\otimes_\mathbb{C}\mathbb{C}\mon\) be a matrix polynomial, and \(\mathscr{X}=\mathscr{Z}_n(f)\) the \(n\)-graded piece of the free locus of \(f\).
    Then \(\mathscr{X}=V(f_n)\subset\mathbb{A}^{2gn^2}\) with anti-regular involution \(\sigma:\M_n(\mathbb{C})^g\times\M_n(\mathbb{C})^g:(X,Y)\mapsto(Y^*,X^*)\) defines a real structure on \(\mathbb{A}^{2gn^2}\).
    So \((\mathscr{X},\restr{\sigma}{\mathscr{X}})\) is an \(\mathbb{R}\)-variety.
    The real locus of \(\mathscr{X}\) is
    \begin{align*}
        \mathscr{X}(\mathbb{R}) &= \{(X,Y) \in \mathscr{X} : \sigma(X,Y)=(X,Y)\} = \mathscr{Z}_n^{\text{re}}(f).
    \end{align*}
\end{exa}

\begin{defn}
    Let \((X,\sigma)\) be an \(\mathbb{R}\)-variety. 
    We say that \(X\) \emph{has enough real points} if \(X(\mathbb{R})\) is Zariski-dense in \(X\).
\end{defn}
Having enough real points will be a desirable property of the \(\mathbb{R}\)-varieties we are interested in. 
Before finding sufficient conditions to ensure this property, we give a family of examples of \(\mathbb{R}\)-varieties with enough real points.
\begin{exa}
    If \(\sqrt{J}\subset\mathbb{R}[x_1,\ldots,x_n]\) a radical ideal and \(F=\V(\sqrt{J})\subset\mathbb{A}^n\) the associated complex affine variety, then \((F,\restr{\sigma_n}{F})\) is an \(\mathbb{R}\)-variety with enough real points if and only if \(\sqrt{J}\) is a real ideal (that is, \(a_1^2+\cdots+a_r^2\in\sqrt{J}\implies a_1,\ldots,a_r\in\sqrt{J}\).)
\end{exa}

Observe that a real structure \(\sigma\) on a variety \(X\) induces a group action of \(G\defeq\text{Gal}(\mathbb{C}/\mathbb{R})\) on each \(\mathscr{O}_X(U)\) by \(f\mapsto \sigma_1\circ f\circ \sigma\).
This induces an action on stalks, and further on cotangent spaces, which gives us the following Lemma \cite[Lemma 2.2.8]{M20}.
\begin{lem}\label{CotanDim}
    Let \((X,\sigma)\) be an \(\mathbb{R}\)-variety and let \(a\in X(\mathbb{R})\) be a real point of \(X\).
    Consider the cotangent space \(\mathfrak{m}_a/\mathfrak{m}_a^2\) at \(a\), where \(\mathfrak{m}_a\) is the maximal ideal of the stalk \(\mathcal{O}_{X,a}\) at \(a\).
    Then 
    \[
        \dim_\mathbb{C}\mathfrak{m}_a/\mathfrak{m}_a^2 = \dim_{\mathbb{R}}(\mathfrak{m}_a/\mathfrak{m}_a^2)^G.
    \]
\end{lem}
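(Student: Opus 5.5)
The strategy is Galois descent for the finite-dimensional complex vector space \(V\defeq\mathfrak{m}_a/\mathfrak{m}_a^2\), equipped with the action of \(G=\text{Gal}(\mathbb{C}/\mathbb{R})=\{1,c\}\) induced by the real structure. First we check that the action of the nontrivial element \(c\) on \(\mathcal{O}_{X,a}\), given by \(f\mapsto \sigma_1\circ f\circ\sigma\), is well defined and semilinear. Since \(a\in X(\mathbb{R})\) we have \(\sigma(a)=a\). Hence a germ of a regular function \(f\) at \(a\) is sent to a germ at \(\sigma(a)=a\), which is regular because \(\sigma\) is anti-regular. The map is additive and multiplicative. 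It is also antilinear: \(\sigma_1(\lambda f)=\bar\lambda\,\sigma_1(f)\) pointwise. It squares to the identity because \(\sigma\) and \(\sigma_1\) are involutions. Finally, it preserves \(\mathfrak{m}_a\), since \(f(a)=0\) implies \(\overline{f(\sigma(a))}=\overline{f(a)}=0\), and therefore it also preserves \(\mathfrak{m}_a^2\). So \(c\) descends to an antilinear involution \(\tau\) of \(V\), and \(V^G=\{v:\tau v=v\}\) is a real subspace.

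The key step is to show that the natural map \(V^G\otimes_\mathbb{R}\mathbb{C}\to V\) is an isomorphism, i.e. \(V=V^G\oplus iV^G\) as real vector spaces. Any \(v\in V\) decomposes as
\[
v=\tfrac12(v+\tau v)+i\cdot\tfrac{1}{2i}(v-\tau v).
\]
Both \(\tfrac12(v+\tau v)\) and \(\tfrac1{2i}(v-\tau v)\) are \(\tau\)-fixed: for the second, antilinearity gives \(\tau\big(\tfrac1{2i}(v-\tau v)\big)=-\tfrac1{2i}(\tau v-v)\). This proves \(V=V^G+iV^G\). The sum is direct, because \(w\in V^G\cap iV^G\) means \(w=iu\) with \(\tau u=u\), so \(\tau w=-iu=-w=w\), forcing \(w=0\).

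It follows that a real basis of \(V^G\) is a complex basis of \(V\): it spans over \(\mathbb{C}\) by the decomposition above. For independence, a complex relation \(\sum(\alpha_k+i\beta_k)e_k=0\) with \(\alpha_k,\beta_k\in\mathbb{R}\) splits into real relations \(\sum\alpha_k e_k=0\) and \(\sum\beta_k e_k=0\) by directness of \(V^G\oplus iV^G\). Therefore \(\dim_\mathbb{C}V=\dim_\mathbb{R}V^G\). Finiteness of \(\dim_\mathbb{C} V\) holds because \(X\) is a variety, so \(\mathcal{O}_{X,a}\) is Noetherian and \(\mathfrak{m}_a\) is finitely generated.

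The only point I expect to need care is the first step. There one must confirm that \(\sigma_1\circ f\circ\sigma\) really defines a map on stalks at \(a\), using continuity of \(\sigma\) to pull back neighbourhoods of \(a\). One must also confirm that this map is \(\mathbb{C}\)-antilinear rather than linear, so that the action on \(V\) is antilinear. The descent argument afterwards is routine linear algebra.
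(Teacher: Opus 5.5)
Your proof is correct and follows the same route as the paper. You induce an antilinear involution on $\mathfrak{m}_a/\mathfrak{m}_a^2$ from $f\mapsto\sigma_1\circ f\circ\sigma$, using $\sigma(a)=a$, and then show that a real basis of the fixed space is a complex basis. The paper phrases this last step as the $\pm1$-eigenspace splitting $E=E^\rho\oplus\ker(\rho+\mathrm{id})$, which is exactly your $V=V^G\oplus iV^G$, since multiplication by $i$ maps $V^G$ onto the $(-1)$-eigenspace; you simply spell out the verifications the paper leaves implicit.
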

\begin{proof}
    If \([(f,U)]\in\mathfrak{m}_a/\mathfrak{m}_a^2\) then \(a\in U\) and \(f\in\mathcal{O}_X(U)\), but \(a\) is a real point so \(a\in\sigma^{-1}(U)\) for any representative in our germ.
    Thus \((\sigma_1\circ f\circ \sigma,\sigma^{-1}(U))\in\mathfrak{m}_a/\mathfrak{m}_a^2\), which gives an antilinear involution on \(\mathfrak{m}_a/\mathfrak{m}_a^2\) with fixed points \((\mathfrak{m}_a/\mathfrak{m}_a^2)^G\).
    Now for a finite-dimensional \(\mathbb{C}\)-vector space \(E\) with antilinear involution map \(\rho\) and fixed ponts \(E^\rho\), an \(\mathbb{R}\)-basis of \(E^\rho\) will also be a \(\mathbb{C}\)-basis of \(E\) (not hard to see: \(\rho\) is a diagonalisable matrix on the \(\mathbb{R}\)-vector space \(E\) with eigenvalues \(1\) and \(-1\), so decompose the \(\mathbb{R}\)-vector space \(E = E^\rho\oplus\ker(\rho+\text{id.})\)). 
    Thus \(\mathfrak{m}_a/\mathfrak{m}_a^2\) has a \(\mathbb{C}\)-vector space basis consisting of all \(\sigma\)-invariant functions.
\end{proof}

The next proposition, and the corollary following, are well-known \cite[Theorem 2.2.9]{M20}. 
We write down the proof here for clarity and completeness.
\begin{prop}\label{density}
    Let \((X,\sigma)\) be an \(\mathbb{R}\)-variety.
    \begin{itemize}
        \item[1.] Suppose \(X\subset\mathbb{A}^n\) an irreducible affine \(\mathbb{C}\)-variety such that \(I(X)\) is generated by polynomials with real coefficients, and \(\sigma=\restr{\sigma_n}{X}\).
        Then the real locus \(X(\mathbb{R})\) is Zariski dense in \(X\) if and only if \(X(\mathbb{R})\) contains a regular point of \(X\).
        \item[2.] Now suppose \((X,\sigma)\) is any \(\mathbb{R}\)-variety. 
        Then the real locus \(X(\mathbb{R})\) is Zariski dense in every irreducible component of \(X\) containing a regular real point of \(X\) if and only if \(X(\mathbb{R})\) contains a regular real point of \(X\).
    \end{itemize}
\end{prop}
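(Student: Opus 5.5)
Part 1 is the classical density theorem for real points, and I would prove it with the implicit function theorem. Let $d=\dim X$, so $X(\mathbb{R})=X\cap\mathbb{R}^n$.

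For the forward direction, suppose $X(\mathbb{R})$ is Zariski dense. The singular locus $\mathrm{Sing}(X)$ is a proper closed subset of the irreducible variety $X$. So $X(\mathbb{R})\not\subset \mathrm{Sing}(X)$, and $X(\mathbb{R})$ contains a regular point of $X$.

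For the converse, let $a\in X(\mathbb{R})$ be a regular point.
\begin{itemize}
\item Choose real generators $f_1,\ldots,f_r$ of $\I(X)$. The Jacobian of $(f_1,\ldots,f_r)$ at $a$ is a real matrix. By regularity its complex rank is $n-d$, and since the matrix is real, its real rank is also $n-d$. (Lemma \ref{CotanDim} expresses the same compatibility of $\sigma$ with the cotangent space.)
\item Pick $n-d$ of the $f_i$ whose gradients at $a$ are independent. By the real implicit function theorem, their common real zero set near $a$ is a real-analytic submanifold $M$ of dimension $d$.
\item Near $a$, $X$ agrees with the complex zero set of these $n-d$ functions, because both are smooth of complex dimension $d$ at $a$. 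Hence $M\subset X(\mathbb{R})$ near $a$.
\item Let $g$ be a polynomial vanishing on $X(\mathbb{R})$. Then $g$ vanishes on $M$. Parametrise $M$ by $d$ real coordinates $t$ via an analytic map $\phi$; then $g\circ\phi$ vanishes on an open subset of $\mathbb{R}^d$.
\item Complexify: the complex implicit function theorem gives a holomorphic parametrisation $\phi_{\mathbb{C}}$ of a neighbourhood of $a$ in $X$, extending $\phi$. The function $g\circ\phi_{\mathbb{C}}$ is holomorphic and vanishes on a real open set, so it vanishes identically by the identity theorem.
\item Thus $g$ vanishes on a Euclidean neighbourhood of $a$ in $X$. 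Since $X$ is irreducible, such a neighbourhood is Zariski dense, so $g\in\I(X)$ and $X(\mathbb{R})$ is Zariski dense.
\end{itemize}

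Part 2 I would reduce to part 1 by working locally. The condition on the right-hand side is needed whenever some component contains a regular real point, so I will show the left-hand side holds componentwise.

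Let $Z$ be an irreducible component of $X$ containing a regular real point $a$. A regular point lies on exactly one component, so $a\notin Z'$ for every other component $Z'$. Then $\sigma(Z)$ is also a component containing $\sigma(a)=a$, so $\sigma(Z)=Z$ and $(Z,\sigma|_Z)$ is an $\mathbb{R}$-variety.

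Choose a $\sigma$-stable affine open neighbourhood $W$ of $a$ inside $Z$ that avoids the other components. To get one, take an affine open $U\ni a$ and form $U\cap\sigma(U)$ as in the discussion before the definition of real locus, then shrink to an affine $\sigma$-stable piece. Here I would use that $\mathbb{R}$-varieties with a real point admit such neighbourhoods, i.e.\ quasi-projectivity.

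Embed $W\hookrightarrow\mathbb{A}^N$ with real structure $\sigma_N$. To do this, choose generators of $\mathcal{O}(W)$ fixed by the Galois action $f\mapsto\sigma_1\circ f\circ\sigma$, replacing $f$ by $(f+\bar f)/2$ and $(f-\bar f)/2i$. The ideal of $W$ is then generated by real polynomials, so part 1 applies: $W(\mathbb{R})$ is Zariski dense in $W$. Since $W$ is dense in $Z$, $X(\mathbb{R})$ is dense in $Z$.

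The main obstacle is this step: producing a $\sigma$-stable affine neighbourhood with a real embedding. This is where the Galois descent argument, in the spirit of Lemma \ref{CotanDim}, has to be carried out carefully.
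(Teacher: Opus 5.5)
Your proposal is correct. Part 2 follows the paper's localisation argument, while part 1 takes a genuinely different route.

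\textbf{Part 1.} The paper argues by dimension. Near $a$ the real locus is a $d$-dimensional manifold, so $\mathbb{R}[x_1,\ldots,x_n]/\I(X(\mathbb{R}))$ has Krull dimension $d$. It is a quotient of the $d$-dimensional domain $\mathbb{R}[x_1,\ldots,x_n]/J_{\mathbb{R}}$, where $J_{\mathbb{R}}=\I(X)\cap\mathbb{R}[x_1,\ldots,x_n]$, so the two ideals must coincide. You instead show directly, via the implicit function theorem and the identity theorem, that every polynomial $g$ (even one with complex coefficients) vanishing on $X(\mathbb{R})$ lies in $\I(X)$. Your $\phi_{\mathbb{C}}$ does extend $\phi$: by uniqueness in the implicit function theorem, and because the defining equations are real, the solution map sends real parameters to real points.

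This route avoids the real-algebraic dimension theory the paper cites, namely the comparison of semialgebraic and Krull dimension and the integrality argument for $\dim\mathbb{R}[x_1,\ldots,x_n]/J_{\mathbb{R}}=d$. The price is the fact that a Euclidean neighbourhood of a smooth point is Zariski dense in an irreducible variety. Your route also makes explicit the real-Jacobian-rank fact that the paper uses tacitly when it asserts that $X(\mathbb{R})$ has real dimension $d$ near $a$.

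\textbf{Part 2.} You supply details the paper omits. First, $\sigma(Z)=Z$ because a regular point lies on a unique component. Second, choosing $\sigma$-invariant generators by Galois descent makes the ideal of the image conjugation-stable, hence generated by real polynomials.

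You do not need quasi-projectivity there. Take an affine $U\ni a$ inside the $\sigma$-stable open set $X\setminus\bigcup_{Z'\neq Z}Z'$. Then $\sigma(U)$ is affine, being anti-isomorphic to $U$, and it contains $a=\sigma(a)$. Since varieties are separated, $U\cap\sigma(U)$ is affine.

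Finally, your reading of the forward implication in part 2 is the right one. It is trivial as soon as some component contains a regular real point. As literally stated, however, it fails vacuously otherwise, e.g.\ for $X=\V(x^2+1)\subset\mathbb{A}^1$ with $\sigma_1$. The paper's argument for that direction likewise presupposes such a component.
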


\begin{proof}
    We begin with statement 1 as it is required for our proof of statement 2.
    \begin{itemize}
        \item[1.] Let \(V_\mathbb{R}(S)\) be the set of prime ideals in \(\mathbb{R}[x_1,\ldots,x_n]\) containing the set \(S\), for any subset \(S\) of \(\mathbb{R}[x_1,\ldots,x_n]\).  
        Since \(X\) is irreducible, \(J=I(X)\) is prime in \(\mathbb{C}[x_1,\ldots,x_n]\). 
        Pulling back along the inclusion \(\mathbb{R}[x_1,\ldots,x_n]\hookrightarrow\mathbb{C}[x_1,\ldots,x_n]\), we get a prime ideal \(J_\mathbb{R} \defeq J\cap\mathbb{R}[x_1,\ldots,x_n]\) of \(\mathbb{R}[x_1,\ldots,x_n]\).
        But \(J\) is generated by polynomials with real coefficients so \(X=V(J_\mathbb{R})\) and \(X(\mathbb{R})=V_\mathbb{R}(J_\mathbb{R})\). 
        Let \(d\) be the dimension of \(X\), i.e., the Krull dimension of \(\mathbb{C}[x_1,\ldots,x_n]/J\).
        Then \(\mathbb{R}[x_1,\ldots,x_n]/J_\mathbb{R}\) also has dimension \(d\) (follows from the injective integral map \(\mathbb{R}[x_1,\ldots,x_n]/J_\mathbb{R}\hookrightarrow\mathbb{C}[x_1,\ldots,x_n]/J\).)
        
        Since the regularity locus of an irreducible variety is nonempty open, it is dense, and so we only need to show that \(X(\mathbb{R})\) is dense in \(X\) given \(X(\mathbb{R})\) contains a regular point of \(X\).
        Let \(a\in X(\mathbb{R})\) be such a regular point. 
        Then the dimension of the cotangent space of \(X\) at \(a\) is equal to \(d\), so there is a Euclidean open neighbourhood \(W\subset X\) of \(a\) for which \(W\) is a differentiable manifold of real dimension \(2d\). 
        Further \(W\cap X(\mathbb{R})\) is a Euclidean neighbourhood of \(a\) in \(X(\mathbb{R})\) of real dimension \(d\). 
        So \(X(\mathbb{R})\) is a real variety of dimension \(d\), thus there exists a length \(d\) chain of prime ideals in \(\mathbb{R}[x_1,\ldots,x_n]/I(X(\mathbb{R}))\). 
        Notice that \(J_\mathbb{R}\subset I(X(\mathbb{R}))\); if \(J_\mathbb{R}\neq I(X(\mathbb{R}))\) then we would get a length \(d+1\) chain of prime ideals in \(\mathbb{R}[x_1,\ldots,x_n]/J_\mathbb{R}\), a contradiction.
        So \(I(X(\mathbb{R}))=J_\mathbb{R}\), consequently \[\overline{X(\mathbb{R})}=V(I(X(\mathbb{R})))=V(J_\mathbb{R})=X.\]

        \item[2.] Let \(Y\subset X\) be the regular locus of \(X\).
        Suppose first that \(X(\mathbb{R})\) is dense in every irreducible component of \(X\) containing a regular real point.
        If \(Z\) is an irreducible component containing a regular real point of \(X\) then \(Z\cap Y\) is a nonempty open subset of \(Z\), and by hypothesis \(X\cap X(\mathbb{R}\) is dense in \(Z\). 
        So \(X(\mathbb{R})\cap Z\cap Y)\) is nonempty, i.e. \(X(\mathbb{R})\) contains a  regular point of \(X\).

        Now suppose \(X(\mathbb{R})\) contains a regular point of \(X\).
        Let \(Z\) be an irreducible component of \(X\) containing a regular real point of \(X\), and let \(a\in X(\mathbb{R})\cap Y\cap Z\) be this point.
        Since \(Z\) is a \(\mathbb{C}\)-variety, there exists an affine open \(U\subset Z\) containing \(a\).
        By irreducibility of \(Z\), we have that \(U'=U\cap\sigma(U)\) is a nonempty \(\mathbb{R}\)-subvariety of \(Z\), and \(a\in U'\).
        Thus \((U',\restr{\sigma}{U'})\) is isomorphic to an \(\mathbb{R}\)-subvariety \((V,\restr{\sigma_n}{V})\) of \((\mathbb{A}^n,\sigma_n)\), call this map \(\varphi:U'\to V\).
        Then \(\varphi\circ\restr{\sigma}{U'}=\sigma_n\circ\varphi\), so for any \(p\in U'(\mathbb{R})\) we get \(\varphi(p)=\sigma_n(\varphi(p))\), which is fixed by \(\sigma_n\).
        So \(U'(\mathbb{R})=\varphi^{-1}(V(\mathbb{R}))\).
        Applying Part 1, we get that \(V(\mathbb{R})\) is dense in \(V\), so \(\overline{U'(\mathbb{R})}=\overline{\varphi^{-1}(V(\mathbb{R})}=\varphi^{-1}(V)=U'\).
        Finally, \(U'(\mathbb{R})=U'\cap X(\mathbb{R})\) and \(U'\) is dense in \(Z\), so \(X(\mathbb{R})\cap Z\) is dense in \(Z\).
    \end{itemize}
\end{proof}

\begin{cor}\label{irredense}
    Suppose \((X,\sigma)\) is an irreducible \(\mathbb{R}\)-variety.
    If \(X(\mathbb{R})\) contains a regular point of \(X\), then \(X\) has enough real points.
\end{cor}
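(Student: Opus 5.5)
The corollary follows from part 2 of Proposition \ref{density}, applied in the special case where \(X\) is irreducible. First, since \(X\) is an irreducible \(\mathbb{R}\)-variety, its only irreducible component is \(X\) itself. Second, let \(a\in X(\mathbb{R})\) be the given regular point of \(X\). Then \(X\) is an irreducible component of \(X\) that contains a regular real point, namely \(a\).

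The hypothesis of the reverse implication in Proposition \ref{density}(2) asks only that \(X(\mathbb{R})\) contain a regular real point of \(X\), and this holds. That implication therefore gives that \(X(\mathbb{R})\) is Zariski dense in every irreducible component of \(X\) containing a regular real point. In particular \(X(\mathbb{R})\) is dense in \(X\), which by definition says that \(X\) has enough real points.

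No step here is a real obstacle; the content is carried by the proof of Proposition \ref{density}. That proof reduces to an affine chart \(U\cap\sigma(U)\), which is nonempty because \(X\) is irreducible, and then applies the dimension-counting argument of part 1. The one point to check is that the chart \(U'=U\cap\sigma(U)\) around \(a\) is dense in \(X\). This holds because it is a nonempty open subset of an irreducible space. The density of \(U'(\mathbb{R})\) in \(U'\) then passes to density of \(X(\mathbb{R})\) in \(X\).
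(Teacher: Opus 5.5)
Your proposal is correct and matches the paper, which states this corollary without a separate proof as the special case of Proposition \ref{density}(2) in which the irreducible \(X\) is its own unique irreducible component and contains the given regular real point. Your closing remarks about density of the chart \(U\cap\sigma(U)\) only retrace the proof of that proposition and are not needed for the deduction.
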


We now relate these results back to free loci. The following two lemmas appear in \cite{HKV18}, we include the proofs here for ease.
\begin{lem}\label{realhyp}
    Let \(f\in\M_d(\mathbb{C})\otimes_\mathbb{C}\mathbb{C}\mon\) be a noncommutative polynomial, \(\mathscr{X}=\mathscr{Z}_n(f)\) and \(\sigma\) the real structure on \(\M_n(\mathbb{C})^g\times\M_n(\mathbb{C})^g\) defined in example \ref{flrs}. 
    If \(f\) is unsignatured at level \(n\) and \(\mathcal{D}_n(f)\) has nonempty interior, then \(\overline{\mathscr{X}(\mathbb{R})}\) has codimension 1 in \(\M_n(\mathbb{C})^g\times\M_n(\mathbb{C})^g\).
\end{lem}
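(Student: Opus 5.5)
Let \(N = 2gn^2\), let \(R=(\M_n(\mathbb{C})^g\times\M_n(\mathbb{C})^g)(\mathbb{R})\), a real vector space of real dimension \(N\), and let \(f_n=\det f\) on level \(n\). The plan is to bound \(\dim \overline{\mathscr{X}(\mathbb{R})}\) from both sides. For the upper bound, \(\overline{\mathscr{X}(\mathbb{R})}\subset\mathscr{X}=\V(f_n)\). Since \(f\) is unsignatured at level \(n\), there is a point where \(f\) is invertible, so \(f_n\not\equiv0\). Hence \(\mathscr{X}\) is a hypersurface, every component has dimension \(N-1\), and therefore \(\dim\overline{\mathscr{X}(\mathbb{R})}\le N-1\). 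We must also rule out \(\mathscr{X}(\mathbb{R})=\varnothing\), but this will follow from the lower bound.

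For the lower bound, I will produce a piece of \(\mathscr{X}(\mathbb{R})\) of semialgebraic dimension \(N-1\). Since \(\mathcal{D}_n(f)\) has nonempty interior and \(f\) is full, Proposition \ref{fullpos} gives \(\underline{X}\) with \(f(\underline{X},\underline{X}^*)\) positive definite. Unsignatured also gives \(\underline{Y}\) with \(f(\underline{Y},\underline{Y}^*)\) invertible but not positive definite. Invertibility is an open condition and signature is locally constant on invertible Hermitian matrices, so there are Euclidean balls \(B_X\ni(\underline{X},\underline{X}^*)\) and \(B_Y\ni(\underline{Y},\underline{Y}^*)\) in \(R\) on which \(f\) is, respectively, positive definite and invertible with at least one negative eigenvalue. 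For each \(p\in B_Y\), the segment from \((\underline{X},\underline{X}^*)\) to \(p\) must meet \(\partial\mathcal{D}_n(f)\subset\mathscr{Z}_n^{\text{re}}(f)\) (Proposition \ref{pbdrylocus}). Alternatively, the smallest eigenvalue of \(f\) along the segment changes sign, so \(f_n\) vanishes somewhere on it. Parametrise by the first such point: fix \(q=(\underline{X},\underline{X}^*)\) and consider the radial map \(\theta\mapsto\) (first zero of \(t\mapsto f_n(q+t\theta)\)) for directions \(\theta\) in the open set \(S\) of unit directions pointing into \(B_Y\) (a cone over an open subset of the sphere \(S^{N-1}\)). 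The set \(\{q+t(\theta)\theta:\theta\in S\}\subset\mathscr{X}(\mathbb{R})\) projects radially onto an open subset of \(S^{N-1}\), which has dimension \(N-1\). Thus \(\dim_{\text{Semi}}\mathscr{X}(\mathbb{R})\ge N-1\), since the radial projection from \(q\) is semialgebraic and does not increase dimension.

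To conclude, I will use the standard fact that the Zariski closure of a semialgebraic set \(A\subset\mathbb{R}^N\) (viewed in \(\mathbb{C}^N\)) has complex dimension equal to \(\dim_{\text{Semi}}A\). Here \(R\cong\mathbb{R}^N\) via real and imaginary parts of entries, and the \(\sigma\)-structure from Example \ref{flrs} identifies \(\M_n(\mathbb{C})^g\times\M_n(\mathbb{C})^g\) with \(R\otimes_\mathbb{R}\mathbb{C}\). Concretely, the real polynomials vanishing on a semialgebraic set of dimension \(N-1\) form an ideal of height at most \(1\), as in the chain-of-primes argument in the proof of Proposition \ref{density}. Therefore \(\dim\overline{\mathscr{X}(\mathbb{R})}\ge N-1\), and combined with the upper bound this gives codimension \(1\). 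Nonemptiness of \(\mathscr{X}(\mathbb{R})\) is automatic from this construction.

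The main obstacle is the dimension count in the lower bound, namely making rigorous that the first-zero set has semialgebraic dimension \(N-1\). The simplest route is to note that it is a semialgebraic set whose image under the semialgebraic radial projection to \(S^{N-1}\) contains an open set, and that semialgebraic maps do not raise dimension. The passage from semialgebraic dimension to the dimension of the complex Zariski closure will be cited from real algebraic geometry (e.g.\ \cite{BCR98}).
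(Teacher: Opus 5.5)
Your proof is correct and follows essentially the same route as the paper. The segments from a positive definite point \((\underline{X},\underline{X}^*)\) to a ball of invertible, non-positive-semidefinite points each meet \(\mathscr{X}(\mathbb{R})\), and a semialgebraic projection gives semialgebraic dimension \(2gn^2-1\) by \cite[Theorem 2.8.8]{BCR98} (yours is radial onto directions with a first-zero selection, the paper's is central onto a transverse disc \(D\) applied to all of \(\mathscr{X}(\mathbb{R})\cap\mathcal{C}\)); \cite[Theorem 2.8.2]{BCR98} transfers this to the Zariski closure, and the matching upper bound comes from \(\mathscr{X}\) being a proper hypersurface.
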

\begin{proof}
    Let \(\mathbb{A}=\left(\M_n(\mathbb{C})^g\times\M_n(\mathbb{C})^g\right)(\mathbb{R})=\{(Z,Z^*):Z\in\M_n(\mathbb{C})^g\}\).
    The real points only form a real vector space, with dimension \(2gn^2\). 
    We take the real part of the complex inner product on the whole space \(\M_n(\mathbb{C})^g\times\M_n(\mathbb{C})^g\), i.e.
    \[
        \langle (U,U^*),(V,V^*)\rangle_\mathbb{R}=\text{Re}\sum_{j=1}^{g}\trace(U_jV_j^*)\quad\forall (U,U^*),(V,V^*)\in\mathbb{A}.
    \]
    By proposition \ref{fullpos} there exists \(X\in\M_n(\mathbb{C})^g\) for which \(f(X,X^*)\) is positive definite. 
    Since \(f\) is unsignatured, there exists \(Y\in\M_n(\mathbb{C})^g\) for which \(f(Y,Y^*)\) is nonsingular and not positive semidefinite.
    This is a Euclidean-open condition; there is \(\varepsilon>0\) such that the ball \(\mathcal{B}\) of radius \(\varepsilon\) around \((Y,Y^*)\) in \(\mathbb{A}\) does not intersect \(\mathscr{Z}_n(f)\) nor \(\mathcal{D}_n(f)\).
    Consider the plane \(P\) in \(\mathbb{A}\) normal to \((X,X^*)-(Y,Y^*)\), and let \(D\) be the intersection of \(P\) with \(\mathcal{B}\):
    \[
        D = \left\{(Z,Z^*)\in\M_n(\mathbb{C})^g\times\M_n(\mathbb{C})^g:\norm{Z-Y}<\varepsilon,\,\sum_{j}\trace\left((Z_j-Y_j)^*(X_j-Y_j)\right)=0\right\}.
    \]
    Then \(D\) is semialgebraic in \(\mathbb{A}\) with real dimension \(2gn^2-1\) (there is a \((2gn^2-1)\)-dimensional region in \(D\) around the point \((Y,Y^*)\)).
    Let \(\mathcal{C}\) be the convex hull of \(\{(X,X^*)\}\cap D\); this cone is a collection of line segments from \((X,X^*)\) to a distinct point in \(D\). 
    In particular, \(\mathscr{X}(\mathbb{R})\) intersects each line segment not at an endpoint.
    Consider the projection from \(\mathscr{X}(\mathbb{R})\cap\mathcal{C}\) onto \(D\) along each respective line segment: \[\pi:\mathscr{X}(\mathbb{R})\cap\mathcal{C}\twoheadrightarrow D.\]
    Then \(\pi\) is a semialgebraic map, so by \cite{BCR98} Theorem 2.8.8, \(\dim_{\text{Semi}}(\mathscr{X}(\mathbb{R})\cap\mathcal{C})=\dim_{\text{Semi}} D=2gn^2-1\).
    Well \(\mathscr{X}(\mathbb{R})\cap\mathcal{C}\subset\overline{\mathscr{X}(\mathbb{R})}\) so \(\overline{\mathscr{X}(\mathbb{R})}\) has a real point \(p\) at which the real dimension of the cotangent space is \(2gn^2-1\). 
    Thus by \cite{BCR98} Theorem 2.8.2, \(\dim\overline{\mathscr{X}(\mathbb{R})}\geq2gn^2-1\).
    Since \(\mathscr{X}\neq\M_n(\mathbb{C})^g\times\M_n(\mathbb{C})^g\) (because \(f\) is unsignatured) then \(\dim\overline{\mathscr{X}(\mathbb{R})}=2gn^2-1\).
\end{proof}
% I WANT TO USE THE COTANGENT DIMENSION LEMMA IN THE ABOVE PROOF...

\begin{lem}\label{flerp}
    Let \(f\in\M_d(\mathbb{C})\otimes_\mathbb{C}\mathbb{C}\mon\) be an unsignatured atom, and suppose \(\mathcal{D}_m(f)\) has nonempty interior for some \(m\in\mathbb{N}\). 
    Then there exists \(n_0\in\mathbb{N}\) for which \(\mathscr{Z}_n(f)\) has enough real points whenever \(n\) is a positive multiple of \(n_0\).
\end{lem}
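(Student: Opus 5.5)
The plan is to combine three ingredients at a common level $n$: unsignaturedness, nonempty interior of the positivity domain, and irreducibility of the hypersurface $\mathscr{Z}_n(f)$. Once all three hold at the same level, Lemma \ref{realhyp} finishes the argument by a dimension count.

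First I will arrange that the hypotheses of Lemma \ref{realhyp} hold at a whole arithmetic progression of levels. Since $f$ is unsignatured, it is unsignatured at some level $n_1$, and by Proposition \ref{unsigpropagate} also at every multiple of $n_1$. By hypothesis $\mathcal{D}_m(f)$ has nonempty interior. Proposition \ref{fullpos} (applicable since unsignatured polynomials are full) gives $\underline{X}$ with $f(\underline{X},\underline{X}^*)$ positive definite. Then $I_k\otimes\underline{X}$ shows, via Proposition \ref{fullpos} again, that $\mathcal{D}_{km}(f)$ has nonempty interior for every $k$. Hence at every level $n$ divisible by $n_1m$, $f$ is unsignatured and $\mathcal{D}_n(f)$ has nonempty interior. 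Lemma \ref{realhyp} then says that the Zariski closure $\overline{\mathscr{X}(\mathbb{R})}$ of the real locus of $\mathscr{X}=\mathscr{Z}_n(f)$ has codimension $1$ in $\M_n(\mathbb{C})^g\times\M_n(\mathbb{C})^g$.

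Second, I need $\mathscr{Z}_n(f)$ to be an irreducible hypersurface for all large $n$. Since $f$ is an atom, its linearisation $L$ is an atom with $\mathscr{Z}(L)=\mathscr{Z}(f)$ (Cohn, Proposition 3.2.1, together with stable associativity preserving free loci). I will invoke the irreducibility theorem of \cite{HKV18}: for a full atomic pencil, $\det L(\underline{X})$ is an irreducible polynomial for all $n\ge N$, for some $N$. I apply it in the $2g$ variables $\underline{x},\underline{y}$. Thus $\mathscr{Z}_n(f)=\V(f_n)$ is an irreducible hypersurface of dimension $2gn^2-1$ for all $n\ge N$. Set $n_0=n_1mN'$ with $N'\ge N$; then every positive multiple $n$ of $n_0$ is at least $N$ and divisible by $n_1m$.

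Finally, for such $n$ we have $\overline{\mathscr{X}(\mathbb{R})}\subseteq\mathscr{X}$, both of dimension $2gn^2-1$, with $\mathscr{X}$ irreducible. A closed subset of an irreducible variety of the same dimension must be the whole variety, so $\overline{\mathscr{X}(\mathbb{R})}=\mathscr{X}$, i.e.\ $\mathscr{X}$ has enough real points.

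The main obstacle is the irreducibility step. I must check that the \cite{HKV18} theorem applies verbatim in the present setting: variables $\underline{x},\underline{y}$ treated as $2g$ free letters, and non-monic linearisations; if needed I would translate so that $L(0)$ is invertible. I also need it to hold at all large levels rather than only at some levels. If only "eventually along multiples" is available, that suffices, since the threshold can be absorbed into $n_0$ as above.
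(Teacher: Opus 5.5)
Your proposal is correct. Its first two steps match the paper's proof. You propagate unsignaturedness and nonempty interior to all multiples of $n_1m$, apply Lemma \ref{realhyp}, and import irreducibility of $\mathscr{Z}_n(f)$ for large $n$ from the literature. On the last point, the paper applies \cite[Theorem 2.9]{HKV20} directly to the atom $f$, with no monicity or translation step, so your contingency plan is not needed. A scalar translation could in any case only make $L(0)$ invertible if $f$ were invertible at some level-one point, which an atom need not be.

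The two routes diverge at the end.

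\textbf{The paper's finish.} It uses that the singular locus of the irreducible hypersurface $\mathscr{X}=\mathscr{Z}_n(f)$ has codimension at least $2$, so $\mathscr{X}(\mathbb{R})$ cannot lie inside it. Hence $\mathscr{X}(\mathbb{R})$ contains a regular point of $\mathscr{X}$, and Corollary \ref{irredense} (that is, Proposition \ref{density}) yields Zariski density.

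\textbf{Your finish.} You note that $\overline{\mathscr{X}(\mathbb{R})}$ is a closed subset of the irreducible variety $\mathscr{X}$ of the same dimension $2gn^2-1$, hence equal to $\mathscr{X}$.

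\textbf{Comparison.} Your argument is shorter and makes the $\mathbb{R}$-variety machinery (real structures, regular real points, Proposition \ref{density}) unnecessary for this lemma. Once Lemma \ref{realhyp} gives the complex dimension of the Zariski closure of the real locus, the regular-point criterion just repackages a dimension comparison that you carry out directly. The paper's detour has the merit of placing the lemma under the general principle that a regular real point forces enough real points on its irreducible component. That principle would still apply if one had only local information at a real point, rather than a global dimension count for $\overline{\mathscr{X}(\mathbb{R})}$.
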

\begin{proof}
    Since \(f\) is unsignatured, there is \(r\in\mathbb{N}\) for which \(f\) is unsignatured at level \(r\).
    By Proposition \ref{unsigpropagate} \(f\) is unsignatured at level \(\ell mr\) for all \(\ell\in\mathbb{N}\), and by Corollary \ref{intpropagate} \(\mathcal{D}_{\ell mr}(f)\) has nonempty interior for all \(\ell\in\mathbb{N}\). 
    By \cite[Theorem 2.9]{HKV20}, there exists \(n_0'\) for which \(\mathscr{Z}_n(f)\) is irreducible for each \(n\geq n_0'\).  
    Choose \(\ell\) large enough so that \(\ell mr\geq n_0'\), and set \(n_0\) to be the first \(\ell mr\) which is larger than \(n_0'\).
    Let \(n\) be a positive multiple of \(n_0\), so that \(f\) is unsignatured at level \(n\) and \(\mathcal{D}_n(f)\) has nonempty interior and \(\mathscr{Z}_n(f)\) is irreducible.
    
    Let \(\mathscr{X}=\mathscr{Z}_n(f)\).
    The singularity locus of \(\mathscr{X}\) has codimension at least 2. 
    By Lemma \ref{realhyp} \(\overline{\mathscr{X}(\mathbb{R})}\) has codimension 1, so \(\mathscr{X}(\mathbb{R})\) has semialgebraic dimension \(2gn^2-1\); it cannot be contained in the singularity locus of \(\mathscr{X}\).
    Then \(\mathscr{X}(\mathbb{R})\) contains a regular point of \(\mathscr{X}\).
    By Corollary \ref{irredense}, \(\mathscr{X}(\mathbb{R})\) is Zariski dense in \(\mathscr{X}\).
\end{proof}

The following result gives a connection between the positivity domain and the free locus.

\begin{thm}\label{posdense}
    Suppose \(f=f_1\oplus \cdots\oplus f_r\in\M_d(\mathbb{C})\otimes_\mathbb{C}\mathbb{C}\mon\) is unsignatured and positivity-minimal, where \(f_1,\ldots,f_r\) are atoms, and suppose \(\mathcal{D}_m(f)\) has nonempty interior for some \(m\in\mathbb{N}\).
    Then there exists \(n_0\in\mathbb{N}\) for which the Eucilidean boundary \(\partial\mathcal{D}_n(f)\) of \(\mathcal{D}_n(f)\) is Zariski dense in \(\mathscr{Z}_n(f)\) whenever \(n\) is a positive multiple of \(n_0\).
\end{thm}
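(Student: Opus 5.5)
Since $f=f_1\oplus\cdots\oplus f_r$, we have $\det f=\prod_j\det f_j$, so $\mathscr{Z}_n(f)=\bigcup_j\mathscr{Z}_n(f_j)$ and $\mathcal{D}_n(f)=\bigcap_j\mathcal{D}_n(f_j)$. The plan is to reduce to the individual atoms and prove, for each $j$ and for all large levels $n$ in a suitable arithmetic progression, that $\partial\mathcal{D}_n(f)\cap\mathscr{Z}_n(f_j)$ is Zariski dense in $\mathscr{Z}_n(f_j)$. The union over $j$ is then Zariski dense in $\mathscr{Z}_n(f)$.

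First I use positivity-minimality to show that every $f_j$ is unsignatured and that $\mathcal{D}(f)\neq\mathcal{D}(\bigoplus_{i\neq j}f_i)$. The second claim is immediate, since otherwise a smaller-size polynomial has the same positivity domain. For the first: $\mathcal{D}_m(f)$ has nonempty interior, so Proposition \ref{fullpos} gives positive definite points, hence each $f_j$ is positive definite somewhere on the levels $km$. If $f_j$ were signatured, then $f_j$ would be positive definite wherever it is invertible, so $\mathcal{D}(f_j)$ would be everything and $f_j$ could be dropped. That contradicts minimality.

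Next I fix $n_0$ as a common multiple of the levels produced by Lemma \ref{flerp} for each $f_j$, of the levels at which each $f_j$ is unsignatured, of the levels at which $\mathcal{D}(f)\subsetneq\mathcal{D}(\bigoplus_{i\ne j}f_i)$ is witnessed, and of $m$. I also take $n_0$ large enough for the irreducibility of $\mathscr{Z}_n(f_j)$ from \cite[Theorem 2.9]{HKV20}. Propositions \ref{unsigpropagate} and Corollary \ref{intpropagate} (and block-diagonal amplification $I_k\otimes\underline{X}$ for the witnesses) let every positive multiple $n$ of $n_0$ inherit all these properties. Fix such an $n$. Then $\mathscr{X}_j=\mathscr{Z}_n(f_j)$ is irreducible and, by Lemma \ref{flerp}, has enough real points. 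So it suffices to show that $\partial\mathcal{D}_n(f)\cap\mathscr{X}_j(\mathbb{R})$ contains a nonempty Euclidean-open subset of the smooth part of $\mathscr{X}_j(\mathbb{R})$ of dimension $2gn^2-1$. Indeed, the argument of Proposition \ref{density}(1) (via the chain-of-primes dimension count) then shows that its Zariski closure is a closed subset of the irreducible hypersurface $\mathscr{X}_j$ of full dimension, hence equal to $\mathscr{X}_j$.

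To produce this open piece, I take a point $(\underline{Y},\underline{Y}^*)\in\mathcal{D}_n(\bigoplus_{i\neq j}f_i)\setminus\mathcal{D}_n(f)$, perturbed so that it lies in the interior of $\mathcal{D}_n(\bigoplus_{i\ne j}f_i)$ with $f_j(\underline{Y},\underline{Y}^*)$ invertible and not positive semidefinite. I also take $(\underline{X},\underline{X}^*)$ with $f$ positive definite. Running the cone construction of Lemma \ref{realhyp} from $\underline{X}$ to a small disc $D$ around $\underline{Y}$, the disc can be kept inside $\operatorname{int}\mathcal{D}_n(\bigoplus_{i\neq j}f_i)$. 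Each segment from $\underline{X}$ to a point of $D$ has a first exit point from $\mathcal{D}_n(f)$, and this point lies in $\partial\mathcal{D}_n(f)$. The main obstacle is guaranteeing that this exit point lies on $\mathscr{Z}_n(f_j)$ and not on some $\mathscr{Z}_n(f_i)$ with $i\neq j$, because the segment might leave $\mathcal{D}_n(f_i)$ before reaching $D$. My first attempt would be to choose $\underline{X}$ close to the boundary, replacing it by a point near $\underline{Y}$ just inside $\mathcal{D}_n(f)$, so that the whole cone lies in $\operatorname{int}\mathcal{D}_n(\bigoplus_{i\neq j}f_i)$. Exit points then lie only on $\mathscr{Z}_n(f_j)$, and projecting onto $D$ as in Lemma \ref{realhyp} gives semialgebraic dimension $2gn^2-1$. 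Finally, the singular locus of the irreducible hypersurface $\mathscr{X}_j$ has codimension at least $2$, so this set contains a regular real point, which completes the density argument.
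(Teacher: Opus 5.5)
Your plan matches the paper's. You split $\mathscr{Z}_n(f)$ and $\mathcal{D}_n(f)$ over the atoms, take a non-redundancy witness $\underline{Y}$ for $f_j$, run the cone of Lemma \ref{realhyp} from a positive definite point to a disc around $\underline{Y}$, and conclude from semialgebraic dimension $2gn^2-1$ and irreducibility of $\mathscr{Z}_n(f_j)$.

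You have also found the step the paper skips. Its proof asserts that each segment meets $\partial\mathcal{D}_n(f_j)\cap\partial\mathcal{D}_n(f)$. Continuity only gives a point of each boundary on the segment, not a common one, because the $\mathcal{D}_n(f_i)$ need not be convex.

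\textbf{Your remedy does not close this gap.} Moving $\underline{X}$ to ``a point near $\underline{Y}$ just inside $\mathcal{D}_n(f)$'' assumes the witness can be joined to $\interior\mathcal{D}_n(f)$ by a short segment inside $\interior\mathcal{D}_n(\bigoplus_{i\neq j}f_i)$. Positivity-minimality gives nothing like this. It only produces some point of $\mathcal{D}(\bigoplus_{i\neq j}f_i)\setminus\mathcal{D}(f_j)$, which may lie far from $\mathcal{D}_n(f)$.

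For instance, that point could lie in a connected component of $\interior\mathcal{D}_n(\bigoplus_{i\neq j}f_i)$ on which $f_j$ is nowhere positive semidefinite. Meanwhile $f_j$ could be strictly positive on the component containing $\interior\mathcal{D}_n(f)$. Then $f_j$ is not redundant, yet $\partial\mathcal{D}_n(f)$ can be disjoint from $\mathscr{Z}_n(f_j)$, and nothing in your argument rules this out.

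Perturbing $\underline{Y}$ into $\interior\mathcal{D}_n(\bigoplus_{i\neq j}f_i)$ is also unjustified. The witness could sit on a lower-dimensional piece of that domain. The paper's choice of the ball $\mathcal{B}$ makes the same tacit assumption.

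\textbf{What would make the cone argument work.} For each $j$ and each relevant level $n$, you need a connected component $U_0$ of $\interior\mathcal{D}_n(\bigoplus_{i\neq j}f_i)$ meeting both $\interior\mathcal{D}_n(f)$ and the complement of $\mathcal{D}_n(f_j)$. Given such a $U_0$, the argument runs as follows.
\begin{enumerate}
\item The set $W=\interior\mathcal{D}_n(f)\cap U_0$ is a nonempty open proper subset of the connected set $U_0$, so it has a relative boundary point $p\in U_0$, and $p\in\partial\mathcal{D}_n(f)$.
\item On a small convex ball $B\subset U_0$ around $p$ one has $\mathcal{D}_n(f)\cap B=\mathcal{D}_n(f_j)\cap B$.
\item $B$ contains points where $f$ is positive definite and points where $f_j$ is invertible but not positive semidefinite.
\item Running your cone inside $B$ puts every first exit point on $\partial\mathcal{D}_n(f)\cap\mathscr{Z}_n(f_j)$, and the dimension count finishes as you intend. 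The appeal to Lemma \ref{flerp} and to regular points is then unnecessary.
\end{enumerate}

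The missing ingredient is an argument that produces such a $U_0$ from the hypotheses, or an extra hypothesis that guarantees it. As it stands, neither your proposal nor the paper's proof supplies one.
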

\begin{proof}
    Observe that
    \[
        \mathcal{D}_n(f)=\bigcap_{i\in\{1,\ldots,r\}}\mathcal{D}_n(f_i)\quad\text{and}\quad\mathscr{Z}_n(f)=\bigcup_{i\in\{1,\ldots,r\}}\mathscr{Z}_n(f_i),\quad \forall n\in\mathbb{N}.
    \]
    Choose \(n_0\) large enough and a multiple of \(m\) so that \(f\) is unsignatured at level \(n_0\), \(\mathcal{D}_{n_0}(f)\) has nonempty interior, and each \(\mathscr{Z}_{n_0}(f_i)\) is an irreducible hypersurface. 
    Fix \(n\) a positive multiple of \(n_0\).
    Assume that at level \(n\), no \(f_i\) is positive semidefinite everywhere (if some \(f_i\) is, discard \(\mathcal{D}_n(f_i)\) from the intersection above).
    Also assume no \(f_i\) vanishes nowhere (if some \(f_i\) does, then discard \(\mathscr{Z}_n(f_i)\) from the union above).

    Since \(\mathcal{D}_n(f)\) has nonempty interior, there exists \((X,X^*)\) in the interior of \(\mathcal{D}_n(f)\); then \((X,X^*)\) is in the interior of each \(\mathcal{D}_n(f_i)\). 
    Fix \(j\in\{1,\ldots,r\}\).
    If \(\bigcap_{i\in\{1,\ldots,r\}\backslash\{j\}}\mathcal{D}_n(f_i)\subset\mathcal{D}_n(f_j)\) then \(f\) would not be minimal; so there exists \((Y,Y^*)\in\mathcal{D}_n(f_i)\) for all \(i\neq j\) and \((Y,Y^*)\notin\mathcal{D}_n(f_j)\).
    Let \(\mathbb{A}=\left(\M_n(\mathbb{C})^g\times\M_n(\mathbb{C})^g\right)(\mathbb{R})\) with the real inner product
    \[
        \langle(U,U^*),(V,V^*)\rangle_{\mathbb{R}}=\text{Re}\sum_{j=1}^{g}\trace(U_jV_j^*)\quad\forall(U,U^*),(V,V^*)\in\mathbb{A}.
    \]
    Let \(\mathcal{B}\subset\mathbb{A}\) be the ball of radius \(\varepsilon>0\) around \((Y,Y^*)\) so that \((Z,Z^*)\in\mathcal{D}_n(f_i)\) for all \(i\neq j\) and \((Z,Z^*)\notin\mathcal{D}_n(f_j)\), for all \((Z,Z^*)\in\mathcal{B}\).
    Let \(D\) be the intersection of \(\mathcal{B}\) with the plane in \(\mathbb{A}\) orthogonal to \((Y,Y^*)-(X,X^*)\):
    \[
        D:=\left\{(Z,Z^*)\in\mathbb{A}:\norm{Z-Y}<\varepsilon,\,\sum_j\trace\left((Z_j-Y_j)^*(X_j-Y_j)\right)=0\right\}.
    \]
    Let \(\mathcal{C}\) be the convex hull of \(\{(X_0,X_0^*)\}\cup D\), then \(\mathcal{C}\) consists of line segments from \((X_0,X_0^*)\) into \(D\).
    Denote each line segment by 
    \[
        \ell_Z:=\{(1-t)(X,X^*)+t(Z,Z^*):t\in[0,1]\}\quad\forall (Z,Z^*)\in D,
    \]
    so that \(\mathcal{C}=\bigcup_{(Z,Z^*)\in D}\ell_Z\).
    Each line segment intersects \(\partial\mathcal{D}_n(f_j)\cap\partial\mathcal{D}_n(f)\) at least once (positive semidefiniteness is a continuous condition) and at most a finite number of times (both \(\partial\mathcal{D}_n(f_j)\) and \(\partial\mathcal{D}_n(f)\) lie on closed varieties), so we get a bijection
    \[
        \rho:\{\ell_Z:(Z,Z^*)\in D\}\xlongrightarrow{\sim}\partial\mathcal{D}_n(f_j)\cap\mathcal{D}_n(f)\cap\mathcal{C}:\ell_Z\mapsto\inf\partial\mathcal{D}_n(f_j)\cap\partial\mathcal{D}_n(f)\cap\ell_Z.
    \]
    Thus we have a bijection 
    \[
        D\xlongrightarrow{\sim}\partial\mathcal{D}_n(f_j)\cap\partial\mathcal{D}_n(f)\cap\mathcal{C}:Z\mapsto\rho(\ell_Z)
    \]
    which is semialgebraic (projections and \(\inf\) can be written as first-order formulas in the language of ordered fields with parameters in \(\mathbb{R}\) \cite[Proposition 2.2.4]{BCR98}.)
    It follows from \cite[Theorem 2.8.8]{BCR98} that \(\dim_{\text{Semi}}\left(\partial\mathcal{D}_n(f_j)\cap\partial\mathcal{D}_n(f)\cap\mathcal{C}\right)=\dim_{\text{Semi}}D=2gn^2-1\).
    Now by \cite[Theorem 2.8.2]{BCR98}, \(\dim\overline{\partial\mathcal{D}_n(f_j)\cap\partial\mathcal{D}_n(f)\cap\mathcal{C}}=2gn^2-1\).
    Consequently \(\overline{\partial\mathcal{D}_n(f_j)\cap\partial\mathcal{D}_n(f)\cap\mathcal{C}}\) is a hypersurface inside the irreducible hypersurface \(\mathscr{Z}_n(f_j)\).
    Thus \(\overline{\partial\mathcal{D}_n(f_j)\cap\partial\mathcal{D}_n(f)}=\mathscr{Z}_n(f_j)\).
    This holds for each \(j\in\{1,\ldots,r\}\) so 
    \[
       \overline{\bigcup_{j\in\{1,\ldots,r\}}\left(\partial\mathcal{D}_n(f_j)\cap\partial\mathcal{D}_n(f)\right)}=\bigcup_{j\in\{1,\ldots,r\}}\overline{\left(\partial\mathcal{D}_n(f_j)\cap\partial\mathcal{D}_n(f)\right)}=\bigcup_{j=\{1,\ldots,r\}}\mathscr{Z}_n(f_j)=\mathscr{Z}_n(f).
    \]
    Each \(\partial\mathcal{D}_n(f_j)\cap\partial\mathcal{D}_n(f)\) is contained in \(\partial\mathcal{D}_n(f)\), so we are done.
\end{proof}

%%%%%%%%%%%%%%%%%%%%%%%%%%%%%%%%%%%%%%%%%%%%%%%%%%%%
% POSITIVITY DOMAINS OF QUADRATICS
%%%%%%%%%%%%%%%%%%%%%%%%%%%%%%%%%%%%%%%%%%%%%%%%%%%%
\section{Positivity Domains of Quadratic Polynomials}\label{PosDomQuadPoly}
Set \(\K=\mathbb{C}\).
We use the notation \([n]\defeq\{1,2,\ldots,n\}\).
We investigate the class of \emph{hereditary} degree 2 matrix polynomials 
\[
    f=A_0+\sum_{i\in[g]}(A_ix_i+B_iy_i)+\sum_{i,j\in[g]}C_{i,j}y_ix_j
\]
where \(A_0,A_i,B_i,C_{i,j}\in\M_d(\mathbb{C})\) for all \(i,j\in[g]\) (first appeared in hereditary functional calculus \cite{Agler90}.) 
Write \(L=A_0+\sum_{i\in[g]}(A_ix_i+B_iy_i)\) for the degree 1 piece of \(f\).
Suppose also that \(f\) is self-adjoint, then \(A_0^*=A_0\), \(A_i^*=B_i\) and \(C_{i,j}^* = C_{j,i}\) for all \(i,j\in[g]\). 
It follows that \(L\) is a self-adjoint linear pencil, and
\[
    C= \begin{pmatrix}
        C_{1,1}&\cdots&C_{1,g}\\
        \vdots&\ddots&\vdots\\ 
        C_{g,1}&\cdots&C_{g,g}
    \end{pmatrix}\in\M_{dg}(\mathbb{C})
\] 
is a Hermitian matrix.
We can write \(C=\hat{P}-\hat{Q}\) where \(\hat{P},\hat{Q}\) are positive semidefinite.
Write \(\hat{P}=P^*P\) and \(\hat{Q}=Q^*Q\) where \(P=\sqrt{\hat{P}}\) and \(Q=\sqrt{\hat{Q}}\).
Decompose \(P\) and \(Q\) each into \(g\) many \((dg\times d)\)-blocks, i.e. \(P=\begin{pmatrix}H_1&H_2&\cdots&H_g\end{pmatrix}\) and \(Q=\begin{pmatrix}K_1&K_2&\cdots&K_g\end{pmatrix}\) where \(H_i,K_i\in \M_{dg\times d}(\mathbb{C})\) for all \(i\in[g]\).
It follows that 
\[
    \sum_{i,j\in[g]}C_{i,j}y_ix_j = \left(\sum_{i\in[g]}H_ix_i\right)^*\left(\sum_{i\in[g]}H_ix_i\right)-\left(\sum_{i\in[g]}K_ix_i\right)^*\left(\sum_{i\in[g]}K_ix_i\right).
\]
Setting \(H=\sum_{i\in[g]}H_ix_i\in\M_{dg\times d}(\mathbb{C})\otimes\mathbb{C}\mon\) and \(K=\sum_{i\in[g]}K_ix_i\in\M_{dg\times d}(\mathbb{C})\otimes\mathbb{C}\mon\), we see that our self-adjoint hereditary quadratic \(f\) can be written as \[f=L+H^*H-K^*K.\]
In other words, self-adjoint hereditary quadratic polynomials are the same data as a certain subset of triples of linear pencils \((L,H,K)\).

\begin{defn}
    We say that a triple of linear pencils \((L,H,K)\) is a \emph{line system} if
    \begin{align*}
        L &= L_0 + \sum_{i\in[g]}L_ix_i + \sum_{i\in[g]}L_i^*y_i \in \M_d(\mathbb{C})\otimes\mathbb{C}\mon, \\
        H &= \sum_{i\in[g]}H_ix_i \in \M_{dg\times d}(\mathbb{C}) \otimes \mathbb{C}\mon, \\
        K &= \sum_{i\in[g]}K_ix_i \in \M_{dg\times d}(\mathbb{C}) \otimes \mathbb{C}\mon
    \end{align*}
    where \(L_0\in\M_d(\mathbb{C})\) is Hermitian, \(L_i\in\M_d(\mathbb{C})\) for all \(i\in[g]\), and \(H_i,K_i\in\M_{dg\times d}(\mathbb{C})\) for all \(i\in[g]\).
    The polynomial \(f=L+H^*H-K^*K\) will be called the \emph{associated polynomial} to the line system \((L,H,K)\).
\end{defn}

We aim to study the positivity domains of these hereditary quadratic polynomials.
While positivity domains of linear pencils are convex, the positivity domains of hereditary quadratics may not be. 
In fact, their components may not even be simply-connected.
\begin{exa}\label{annulus}
    Consider the polynomial in \(\M_2(\mathbb{C})\otimes_\mathbb{C}\mathbb{C}\Langle x,y\Rangle\) given by
    \[
        f = 
        \begin{pmatrix}
            -1 & 0 \\
            0 & 4
        \end{pmatrix} + \begin{pmatrix}
            1 & 0 \\
            0 & -1
        \end{pmatrix}\otimes yx
        =
        \begin{pmatrix}
            yx-1 & 0 \\
            0 & 4-yx
        \end{pmatrix}.
    \]
    At level 1, the positivity domain is 
    \begin{align*}
        \mathcal{D}_1(f) &= \left\{z\in\mathbb{C}:\begin{pmatrix}\abs{z}^2-1&0\\0&4-\abs{z}^2\end{pmatrix}\text{ is positive semidefinite}\right\} \\
        &= \left\{z\in\mathbb{C}:\abs{z}^2-1\geq0,\,4-\abs{z}^2\geq0\right\}.
    \end{align*}
    This is an annulus with inner radius \(1\) and outer radius \(2\):
    \begin{center}
        \begin{tikzpicture}[scale=0.8]
            \draw [name path=B, thick, draw,fill=blue!20](0,0) circle (2cm); 
            \draw [name path=A, thick, draw,fill=white](0,0) circle (1cm); 
            \draw[->] (-3,0) -- (3,0) node[right] {\(x\)};
            \draw[->] (0,-3) -- (0,3) node[above right] {\(y\)};
            \draw (0,0) node[below left] {\(0\)};
            \draw (1,0) node[below left] {\(1\)};
            \draw (2,0) node[below left] {\(2\)};
        \end{tikzpicture}
    \end{center}
\end{exa}
The following is a slightly more exotic example, computed with  Mathematica \cite{Mathematica}.
\begin{exa}
    Consider the family of polynomials
    \[
        \mathbb{R}\to\M_2(\mathbb{C})\otimes_\mathbb{C}\mathbb{C}\Langle x,y\Rangle:t\mapsto f_t = \begin{pmatrix}
            yx-1 & 1+x \\
            1+y & t - yx
        \end{pmatrix}.
    \]
    The family of domains \(t\mapsto\mathcal{D}_1(f_t)\) is simply-connected with smooth boundary for \(t\in(0,\alpha)\) where \(\alpha\) is the unique real solution to \(\alpha^3-5\alpha^2-10\alpha-11=0\), i.e., \(\alpha\approx6.729\).
    At \(t=\alpha\), the boundary has a singularity at \(\beta+0i\) where \(\beta\) is the unique real solution to \(\beta^3-2\beta^2+\beta-1=0\), i.e., \(\beta\approx1.755\).
    For \(t>\alpha\), the domain is no longer simply-connected.
    Below are sketches of \(\mathcal{D}_1(f_t)\) for varying \(t\) values, where the horizontal and vertical axes represent the real and imaginary parts respectively.
    \begin{figure}[!ht]
        \begin{center}
            \begin{minipage}[b]{0.32\textwidth}
                \includegraphics[width=\textwidth]{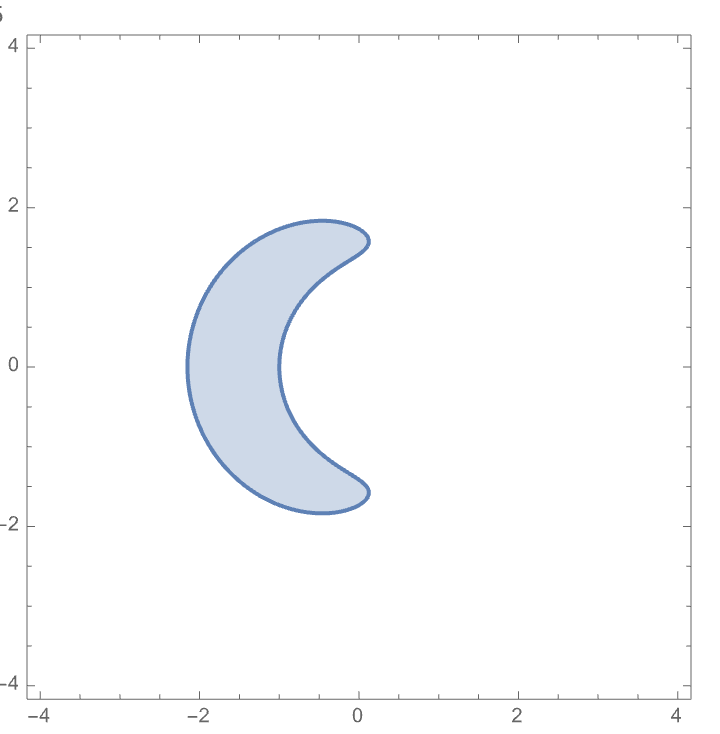}
                \caption{\(t=5\).}
            \end{minipage}
            \hfill
            \begin{minipage}[b]{0.32\textwidth}
                \includegraphics[width=\textwidth]{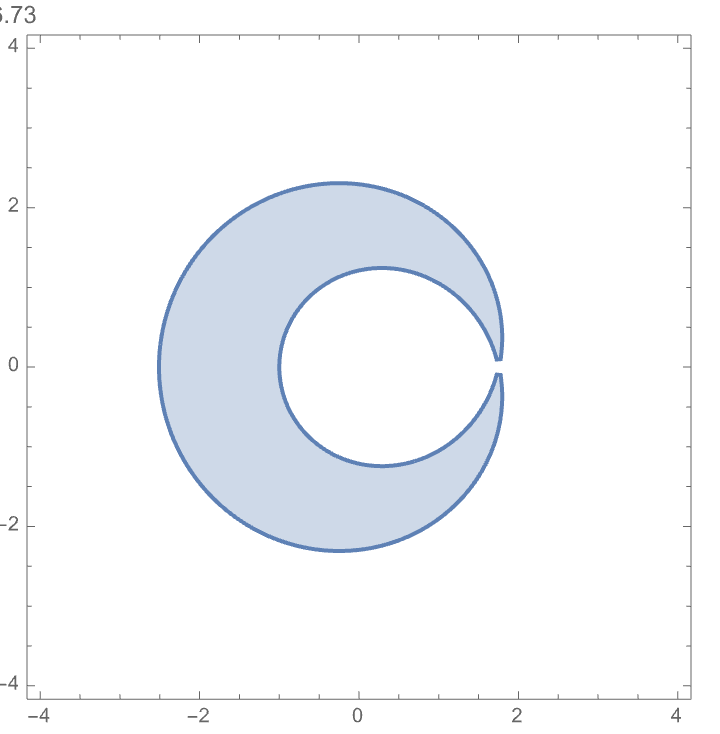}
                \caption{\(t=\alpha\)}
            \end{minipage}
            \hfill
            \begin{minipage}[b]{0.32\textwidth}
                \includegraphics[width=\textwidth]{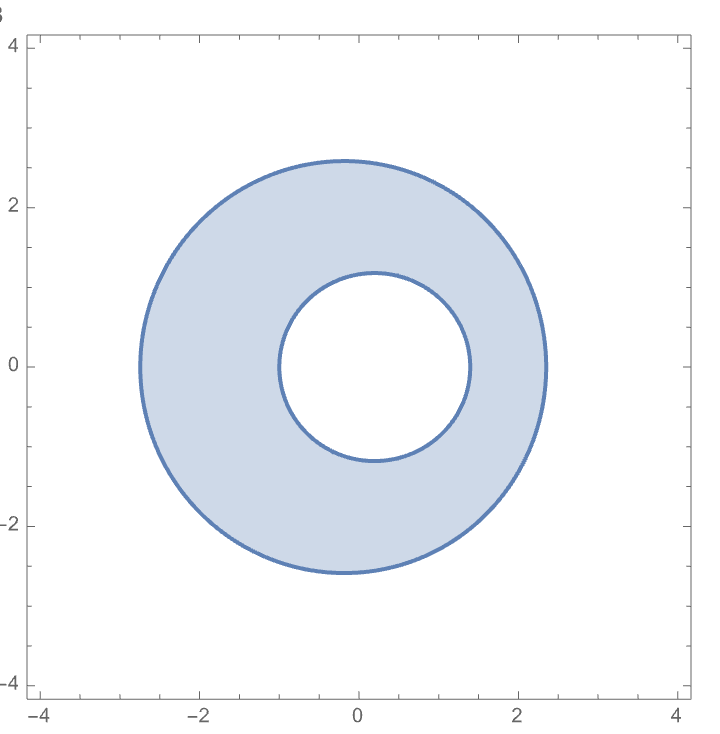}
                \caption{\(t=8\)}
            \end{minipage}
        \end{center}
    \end{figure}
\end{exa}
\FloatBarrier
We impose the following kernel conditions on the coefficients to account for any redundancy in the presentation of the self-adjoint hereditary quadratic.

\begin{defn}
    We say that a line system \((L,H,K)\) is \emph{minimal} for \(f\), or \(f\) is \emph{minimally presented} by the line system \((L,H,K)\), if \(f = L + H^*H - K^*K\) and 
    \begin{itemize}
        \item[(i)] \(\bigcap_{i\in[g]} \ker L_i \cap \ker H_i \cap \ker K_i = \{0\}\), and 
        \item[(ii)] \(\bigcap_{i\in[g]} \ker\begin{pmatrix} H_i^* & K_i^*\end{pmatrix} = \{0\}\).
    \end{itemize}
    We may also say that \((L,H,K)\) is minimal, with \(f\) implicit.
\end{defn}

Every line system \((L,H,K)\) with associated quadratic \(f=L+H^*H-K^*K\) can be replaced by a minimal line system:
\begin{itemize}
    \item[(i)] If \(0\neq v\in\bigcap_{i\in[g]} \ker L_i \cap \ker H_i \cap \ker K_i\), then let \(P_v\) be the orthogonal projection away from \(\spn\{v\}\) and consider the compression \(P_vfP_v\), with line system \((P_vLP_v,P_vHP_v,P_vKP_v)\).
    \item[(ii)] If \(0\neq\begin{pmatrix}v\\-w \end{pmatrix} \in \ker \begin{pmatrix} H_i^* & K_i^*\end{pmatrix}\) then \(H_i^*v=K_i^*w\) for all \(i\in[g]\) and either \(v\neq0\) or \(w\neq0\). If \(v\neq0\), consider the same quadratic \(f\) with line system \((L,P_vH,K)\) where \(P_v\) is the orthogonal projection away from \(\spn\{v\}\). If \(w\neq0\), consider the same quadratic \(f\) with line system \((L,H,P_wK)\) where \(P_w\) is the orthogonal projection away from \(\spn\{w\}\).
\end{itemize}
We make the following observation to mitigate any concern for the positivity domain when compressing \(f\).

Let \(S=\bigcap_{i\in[g]} \ker L_i \cap \ker H_i \cap \ker K_i\).
Let \(P\) be the orthogonal projection onto \(S^\perp\) and \(Q\) the orthogonal projection onto \(S\).
If \(S\) is nonzero and \(\restr{L_0}{S}=QL_0Q\) is not positive semidefinite, then \(\mathcal{D}(f)=\varnothing\).
So whenever \(\mathcal{D}(f)\neq\varnothing\) and \(S\) is nonzero, \(\restr{L_0}{S}\) must be positive semidefinite. 
Set \(L'=L_0+\sum_{i\in[g]}L_ix_i\).
Using the decomposition \(\mathbb{C}^d = S \oplus S^\perp\), we see that 
\[
    f(X,Y) = \begin{pmatrix}
        (Q\otimes I_n)(L_0\otimes I_n)(Q\otimes I_n) & (Q\otimes I_n)L'(X)(P\otimes I_n) \\
        (P\otimes I_n)(L'(X))^*(Q\otimes I_n) & (P\otimes I_n)f(X,Y)(P\otimes I_n)
    \end{pmatrix}\quad\forall X,Y\in\left(\M_n(\mathbb{C})\right)^g.
\]
Since \((Q\otimes I_n)(L_0\otimes I_n)(Q\otimes I_n)=\restr{L_0}{S} \otimes I_n\) is positive semidefinite, we can taking a Schur complement using the notation \(A^{(-1)}\) for pseudoinverse: 
\begin{align*}
    \text{SC}:\!&=(P\otimes I_n)f(X,Y)(P\otimes I_n)-(P\otimes I_n)(L'(X))^*(Q\otimes I_n)\left(\restr{L_0}{S}\otimes I_n\right)^{(-1)}(Q\otimes I_n)L'(X)(P\otimes I_n) \\
    &=(P\otimes I_n)\tilde{f}(X,Y)(P\otimes I_n)
\end{align*}
where \(\tilde{f}(X,Y)=f(X,Y)-(L'(X))^*(Q\otimes I_n)\left(\restr{L_0}{S}\otimes I_n\right)^{(-1)}(Q\otimes I_n)L'(X)\).
Thus 
\[\mathcal{D}(f)=\mathcal{D}(P\tilde{f}P)\]
where \(P\tilde{f}P\) is a self-adjoint hereditary quadratic polynomial with coefficients in \(\text{End}(S^\perp)\).
Hence we have shown the following.
\begin{prop}
    If \(\mathcal{S}\) is a nonempty positivity domain of a self-adjoint hereditary quadratic polynomial, then there exists a minimal line system \((L,H,K)\) with associated polynomial \(f\) for which \(\mathcal{S} = \mathcal{D}(f)\).
\end{prop}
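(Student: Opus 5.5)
The plan is to argue by strong induction on the matrix size \(d\), running the two reduction procedures described just before the statement until neither applies. Start from a self-adjoint hereditary quadratic \(f\) with \(\mathcal{S}=\mathcal{D}(f)\neq\varnothing\), and write it through some line system \((L,H,K)\) using the decomposition \(C=\hat P-\hat Q\) from the beginning of this section. At each stage, either the current line system is minimal and we stop, or one of the two kernel conditions (i), (ii) fails and we modify the data.

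Failure of (ii) does not change \(f\), only the presentation. Suppose \(0\neq \begin{pmatrix} v\\ -w\end{pmatrix}\) with \(H_i^*v=K_i^*w\) for all \(i\). Then \(H^*v=K^*w\) as rows of pencils.

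\begin{itemize}
\item If \(v\neq0\), I will replace \(H\) by \(P_vH\) and \(K\) by \(P_wK\) as appropriate. The point to check is that \(H^*H-K^*K\) is unchanged.
\item Replacing \(H\) by \(P_vH\) alone removes \(H^*vv^*H/\|v\|^2\). This must be compensated, so the precise move is to rotate the pair. Consider the combined column \(\begin{pmatrix} H\\ K\end{pmatrix}\) with the indefinite form \(J=I\oplus -I\). The vector \((v,w)\) annihilates \(H^*,K^*\) in the sense \(v^*H-w^*K=0\), i.e. \((v,-w)^*\begin{pmatrix} H\\ K\end{pmatrix}=0\).
\item Hence the image of \(\begin{pmatrix} H\\ K\end{pmatrix}\) lies in \((v,-w)^\perp\). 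Compressing onto that complement strictly decreases the rank of the stacked coefficient \(\begin{pmatrix} H_1\cdots H_g\\ K_1\cdots K_g\end{pmatrix}\) seen from the left, and leaves \(H^*H-K^*K\) invariant, because we only discard a direction orthogonal to the range.
\end{itemize}

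If the result no longer has the \(dg\times d\) block shape, pad with zero rows. Each such step lowers a finite rank, so after finitely many steps (ii) holds.

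Failure of (i) is handled by the Schur complement computation preceding the statement. Let \(S\neq\{0\}\) be the common kernel. Nonemptiness of \(\mathcal{D}(f)\) forces \(\restr{L_0}{S}\ge 0\), and the displayed identity gives \(\mathcal{D}(f)=\mathcal{D}(P\tilde fP)\). I must check that \(P\tilde fP\) is again a self-adjoint hereditary quadratic. The correction term \((L'(X))^*Q(\restr{L_0}{S})^{(-1)}QL'(X)\) has \(QL'(X)\) linear in \(x\) only, since \(L_i\) kills \(S\), and its adjoint is linear in \(y\). The product is therefore of the form constant \(+\) \(x\)-linear \(+\) \(y\)-linear \(+\) \(y x\)-terms, i.e. hereditary. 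One subtlety is the constant part: \(QL_0P\) contributes to \(L'\), so the \(x\)-linear part of \(QL'P\) is \(\sum_i QL_iPx_i\), and it is hereditary after expansion. The result lives on \(S^\perp\), of dimension \(<d\), so the induction hypothesis applies. A new line system for it is obtained again by the \(C=\hat P-\hat Q\) decomposition.

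The main obstacle is keeping the two reductions from undoing each other and checking the invariants each step needs. Fixing (ii) changes \(H,K\) and hence the common kernel in (i). Fixing (i) changes \(f\) entirely, though not \(\mathcal{D}\). The termination measure I will use is the lexicographic pair \((d,\ \rk\text{ of the stacked }(H,K)\text{ coefficient})\). Step (i) lowers \(d\), and step (ii) keeps \(d\) while lowering the rank. One more check: when \(S^\perp=\{0\}\), \(f\) is a constant \(\ge0\) and \(\mathcal{D}(f)\) is everything. This case is realised by the size-one minimal system \(L=1\) with \(L_1=1\), \(H_1=K_1\) chosen nonzero equal. That system is minimal, and \(H^*H-K^*K=0\), so its domain is again everything.
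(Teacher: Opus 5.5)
Your skeleton is the paper's: run reductions (i) and (ii), using the Schur complement for (i). You are also right that the paper's move \(H\mapsto P_vH\) on its own changes \(H^*H-K^*K\). But your replacement for step (ii) does not work as written.

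With \(M=\begin{pmatrix}H\\K\end{pmatrix}\), \(J=I\oplus-I\) and \(\Pi\) an isometry onto \((v,-w)^\perp\), compression gives \(H^*H-K^*K=(\Pi^*M)^*(\Pi^*J\Pi)(\Pi^*M)\). When \(v\neq0\neq w\), the matrix \(\Pi^*J\Pi\) has the eigenvalue \((\|w\|^2-\|v\|^2)/(\|w\|^2+\|v\|^2)\notin\{\pm1\}\). So it is not of the form \(I\oplus-I\), and no new pair \((H',K')\) has been produced; you would still have to re-diagonalise \(\Pi^*J\Pi\) by a congruence, or use a \(J\)-unitary rotation instead.

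Your termination measure also fails. The rank of \(\begin{pmatrix}H_1&\cdots&H_g\\K_1&\cdots&K_g\end{pmatrix}\) is unchanged, because its range already lies in \((v,-w)^\perp\); only the number of rows drops. Padding with zero rows to restore the \(dg\times d\) shape undoes that drop and re-creates a nonzero vector in \(\bigcap_i\ker\begin{pmatrix}H_i^*&K_i^*\end{pmatrix}\). In fact (ii) can never hold with \(dg\times d\) blocks, since that intersection is the kernel of a \(dg\times 2dg\) matrix. So \(H\) and \(K\) must be allowed arbitrary numbers of rows, as the paper's step (ii) tacitly assumes.

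Once row counts are free, no iteration is needed for (ii). Write \(C=U_+\Lambda_+U_+^*-U_-\Lambda_-U_-^*\) with \(\Lambda_\pm\) positive diagonal and \(\begin{pmatrix}U_+&U_-\end{pmatrix}\) having orthonormal columns. Set \(\begin{pmatrix}H_1&\cdots&H_g\end{pmatrix}=\Lambda_+^{1/2}U_+^*\) and \(\begin{pmatrix}K_1&\cdots&K_g\end{pmatrix}=\Lambda_-^{1/2}U_-^*\). The stacked coefficient then has full row rank, which is exactly (ii), and its kernel is \(\ker C\). Hence the common kernel in (i) is \(\{u:L_iu=0,\ C(e_i\otimes u)=0\ \forall i\}\), where \(e_i\otimes u\) carries \(u\) in the \(i\)-th block. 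This depends only on \(f\), so a plain induction on \(d\) suffices.

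Two further problems. First, your witness for the degenerate case \(S^\perp=\{0\}\) is not minimal. Taking \(H_1=K_1\neq0\) puts \((v,-v)\) into \(\bigcap_i\ker\begin{pmatrix}H_i^*&K_i^*\end{pmatrix}\), which is exactly the redundancy (ii) forbids. Also, with \(L_1=1\) the term \(x_1+y_1\) survives, so the domain is not everything. Use \(f=1+y_1x_1\) instead, i.e.\ \(L=1\), \(H=x_1\), no \(K\); this satisfies (i) and (ii) and has full domain.

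Second, the identity \(\mathcal{D}(f)=\mathcal{D}(P\tilde fP)\), which you take over from the paper, needs \(\restr{L_0}{S}\) positive definite. For a singular \(A\ge0\), \(\begin{pmatrix}A&B\\B^*&D\end{pmatrix}\ge0\) also requires \(\image B\subseteq\image A\), and the pseudoinverse formula drops this condition. For example, take \(f=\begin{pmatrix}0&x_1\\y_1&1\end{pmatrix}\): then \(S=\mathbb{C}e_1\), \(\restr{L_0}{S}=0\) and \(\mathcal{D}(f)=\{0\}\), whereas the recipe returns \(P\tilde fP=1\), whose domain is everything. The conclusion can still hold here, e.g.\ \(\{0\}=\mathcal{D}(-y_1x_1)\), but the argument does not show it. The singular case, where the lost condition is the affine constraint \(Q_0L'(X)P=0\) with \(Q_0\) the projection onto \(\ker\restr{L_0}{S}\), needs a separate argument, in your proposal as in the paper.
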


Minimality is preserved by constant shifts: if \((L,H,K)\) is a minimal line system then so is \((C+L,H,K)\) for any constant matrix \(C\in\M_d(\mathbb{C})\) (of course, if we fix \(f\) then we must be careful.)
Minimality is also preserved under \(^*\)-congruent action of \(\GL_d(\mathbb{C})\).

\begin{prop}\label{GLactLS}
    Let \((L,H,K)\) a minimal line system. 
    For any \(S\in\GL_d(\mathbb{C})\), the line system \((S^*LS,HS,KS)\) is also minimal.
\end{prop}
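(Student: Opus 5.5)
We first check that \((S^*LS,HS,KS)\) is again a line system, with associated polynomial \(S^*fS\), and then verify the two kernel conditions directly.

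Write \(L=L_0+\sum_i L_ix_i+\sum_i L_i^*y_i\). Then
\[
S^*LS = S^*L_0S+\sum_i (S^*L_iS)x_i+\sum_i (S^*L_iS)^*y_i .
\]
Here \(S^*L_0S\) is Hermitian, and \(HS=\sum_i (H_iS)x_i\), \(KS=\sum_i (K_iS)x_i\) have coefficients \(H_iS,K_iS\in\M_{dg\times d}(\mathbb{C})\). So the new triple is a line system. Since \(S\) is a constant matrix, \(S^*\) commutes with the involution on the variables, and
\[
(HS)^*(HS)=S^*H^*HS,\qquad (KS)^*(KS)=S^*K^*KS .
\]
Hence the associated polynomial is \(S^*fS\), where \(f=L+H^*H-K^*K\).

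For condition (i), take \(v\in\bigcap_i \ker(S^*L_iS)\cap\ker(H_iS)\cap\ker(K_iS)\). Since \(S^*\) is invertible, \(S^*L_iSv=0\) gives \(L_iSv=0\). Also \(H_iSv=0\) and \(K_iSv=0\). Thus \(Sv\) lies in \(\bigcap_i\ker L_i\cap\ker H_i\cap\ker K_i=\{0\}\) by minimality of \((L,H,K)\). Because \(S\) is invertible, \(v=0\).

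For condition (ii), the relevant operators are
\[
\begin{pmatrix}(H_iS)^* & (K_iS)^*\end{pmatrix}=S^*\begin{pmatrix}H_i^* & K_i^*\end{pmatrix}.
\]
Since \(S^*\) is injective, each kernel is unchanged:
\[
\ker\begin{pmatrix}(H_iS)^* & (K_iS)^*\end{pmatrix}=\ker\begin{pmatrix}H_i^* & K_i^*\end{pmatrix}.
\]
Therefore the intersection over \(i\) is still \(\{0\}\).

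There is no real obstacle here. The only point needing care is that (ii) is a condition on left multiplication by \(S^*\), which preserves kernels, while (i) involves right multiplication by \(S\), which transports kernels by \(S^{-1}\). In both cases invertibility of \(S\) is exactly what is used.
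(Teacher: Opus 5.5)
Your proof is correct and follows the same route as the paper. Both arguments show that the common kernel in (i) is transported by \(S^{-1}\) and hence stays trivial, and both reduce (ii) to the factorisation \(\begin{pmatrix}(H_iS)^* & (K_iS)^*\end{pmatrix}=S^*\begin{pmatrix}H_i^* & K_i^*\end{pmatrix}\). Your formulation of (ii) is in fact the accurate one: the kernel is unchanged because \(S^*\) acts on the left. The paper instead writes this kernel as \(S^*\bigl(\ker\begin{pmatrix}H_i^* & K_i^*\end{pmatrix}\bigr)\), which does not type-check because that kernel lives in \(\mathbb{C}^{2dg}\), although the conclusion is unaffected.
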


\begin{proof}
    We compute the kernels of the coefficients in the line system \((S^*LS,HS,KS)\):
    \begin{align*}
        \ker S^*L_iS = S^{-1}\left(\ker L_i\right), \quad
        \ker H_iS = S^{-1}\left(\ker H_i\right), \quad
        \ker K_iS = S^{-1}\left(\ker K_i\right).
    \end{align*}
    Consequently, 
    \[
        \bigcap_{i}\ker S^*L_iS\cap\ker H_iS\cap\ker K_iS = S^{-1}\left(\bigcap_{i}\ker L_i\cap\ker H_i\cap\ker K_i\right)=S^{-1}(0)=0.
    \]
    Also \(\ker\begin{pmatrix} S^*H_i^* & S^*K_i^*\end{pmatrix} = S^*\left(\ker\begin{pmatrix} H_i^* & K_i^*\end{pmatrix}\right)\) so, since \(S^*\) is invertible,
    \[
        \bigcap_{i}\ker\begin{pmatrix}S^*H_i^* & S^*K_i^*\end{pmatrix} = \bigcap_iS^*\left(\ker\begin{pmatrix} H_i^* & K_i^*\end{pmatrix}\right) = S^*\left(\ker\begin{pmatrix} H_i^* * K_i^*\end{pmatrix}\right) = S^*\left(0\right) = 0.
    \]
\end{proof}

We introduce a linearisation of \(f=L+H^*H-K^*K\):
\[
    \mathcal{L}_f\defeq\begin{pmatrix}
        I & K & 0 \\
        K^* & L & H^* \\
        0 & H & -I
    \end{pmatrix}.
\]
This linearisation \(\mathcal{L}_f\) is a Hermitian pencil, and \(\mathcal{L}_f(0)\) is invertible whenever \(f(0)\) is invertible. 
Notice that \(\mathcal{L}_f\) is never positive semidefinite. 
Nevertheless \(\mathscr{Z}(\mathcal{L}_f)=\mathscr{Z}(f)\), since \(f\) and \(\mathcal{L}_f\) are stably associated:
\[
    \begin{pmatrix}
        I & 0 \\
        0 & L+H^*H-K^*K
    \end{pmatrix} = 
    \begin{pmatrix}
        I & 0 \\ 
        -K^* & I 
    \end{pmatrix}
    \begin{pmatrix}
        I & K \\
        K^* & L+H^*H
    \end{pmatrix}
    \begin{pmatrix}
        I & -K \\
        0 & I
    \end{pmatrix},
\] 
and 
\[
    \begin{pmatrix}
        I & K & 0 \\
        K^* & L+H^*H & 0 \\
        0 & 0 & I
    \end{pmatrix} = 
    \begin{pmatrix}
        I & 0 & 0 \\
        0 & I & H^* \\
        0 & 0 & I
    \end{pmatrix}
    \begin{pmatrix}
        I & K & 0 \\
        K^* & L & H^* \\
        0 & H & -I 
    \end{pmatrix}
    \begin{pmatrix}
        I & 0 & 0 \\
        0 & I & 0 \\
        0 & H & -I
    \end{pmatrix}.
\]

\begin{lem}\label{atomirred}
    Suppose \(f\) is an atomic self-adjoint hereditary quadratric with minimal line system \((L,H,K)\). 
    Then the linearisation \(\mathcal{L}_f\) is irreducible.
\end{lem}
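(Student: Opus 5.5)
The plan is to go through Cohn's factorisation theory. First I show that \(\mathcal{L}_f\) is a linearisation of \(f\) in the sense of \cite[Theorem 5.8.3]{Cohn06}, so that \(\mathcal{L}_f\) is an atom. Then I show that an atomic pencil of this shape cannot be reducible.

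\textbf{Step 1: the rank conditions.} The displayed identities before the lemma already show that \(\mathcal{L}_f\) is stably associated to \(f\). It remains to check that the stacked linear coefficients of \(\mathcal{L}_f\) have full rank, both in block-row and in block-column form. The coefficient of \(x_i\) in \(\mathcal{L}_f\) is supported in the middle block column \((K_i, L_i, H_i)^T\). The coefficient of \(y_i\) is supported in the middle block row \((K_i^*, L_i^*, H_i^*)\).
\begin{itemize}
\item The intersection of the kernels of all these coefficients is \(\bigcap_i \ker L_i\cap\ker H_i\cap\ker K_i\) on the middle summand. This is \(\{0\}\) by minimality condition (i).
\item On the outer summands the kernel is controlled by \(\bigcap_i\ker\begin{pmatrix}K_i^* & H_i^*\end{pmatrix}\), which vanishes by condition (ii) up to reordering blocks.
\item The row version follows by taking adjoints, since \(\mathcal{L}_f\) is Hermitian.
\end{itemize}
I expect this to be mostly bookkeeping. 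However, the outer blocks of \(\mathcal{L}_f\) carry constant identity entries rather than linear ones, so I would double-check which rank condition is needed there. If Cohn's condition really demands more than (i) and (ii) provide, I would fall back on \cite[Proposition 3.2.1]{Cohn06}, which only uses stable association together with fullness. Either way, \(f\) atomic then gives \(\mathcal{L}_f\) atomic.

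\textbf{Step 2: reducible implies a factorisation.} Suppose \(\mathcal{L}_f\) were reducible. Then there are \(P,Q\in\GL_D(\mathbb{C})\) such that
\[
P\mathcal{L}_fQ=\begin{pmatrix}A & B\\ 0 & C\end{pmatrix},
\]
where the zero block is \(d'\times d''\) with \(d'+d''=D\). This forces \(A\) and \(C\) to be square pencils, and we have the factorisation
\[
\begin{pmatrix}A & B\\ 0 & C\end{pmatrix}=\begin{pmatrix}I & 0\\ 0 & C\end{pmatrix}\begin{pmatrix}A & B\\ 0 & I\end{pmatrix}.
\]
Since \(\mathcal{L}_f\) is an atom, one of the two factors must be a unit. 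By Lemma \ref{diagunit} and its block analogue, this means \(A\) or \(C\) is a unit of \(\M(\mathbb{C}\mon)\).

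\textbf{Step 3: ruling out a unit diagonal block (main obstacle).} A square pencil can be a unit, for example \(\begin{pmatrix}1 & x\\ 0& 1\end{pmatrix}\). So atomicity alone does not finish the argument. My first attempt is to show that if \(A\) (say) is a unit, then the full-rank conditions of Step 1 are violated. The idea is that unit pencils have nilpotent linear parts after a constant change of basis. Peeling off \(A\) by further constant row and column operations would then leave a pencil of smaller size that is still stably associated to \(f\). Such a pencil has a nontrivial common kernel of the linear coefficients, which contradicts the minimality established in Step 1. Concretely, \(\mathcal{L}_f\) would not be of minimal size among linearisations, whereas (i) and (ii) say exactly that no such compression exists.

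If this direct argument becomes messy, I would instead invoke the result of \cite{HKV18}/\cite{HKV20} that a linearisation of minimal size of an atom is irreducible (indecomposable). I would then reduce to that statement via Step 1, noting that \(\mathcal{L}_f(0)\) need not be monic. This would be handled by the general definition of reducibility adopted above from \cite[Definition 2.5]{HKV20}.
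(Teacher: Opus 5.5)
\textbf{Verdict: genuine gap, in Step 3.} Your Step 1 is exactly the paper's computation. The paper also gets atomicity of $\mathcal{L}_f$ from stable association with $f$. It then finishes in one line with \cite[Theorem 5.8.8]{Cohn06}: for a pencil whose linear coefficients have trivial common left and right kernels, being an atom is equivalent to being irreducible. Your worry about the constant identity blocks is moot, since the rank conditions only involve the coefficients of $x_i$ and $y_i$.

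Steps 2--3 try to reprove the needed direction of that theorem by hand. Step 2 is fine, but Step 3, which carries all the content, is not established. You describe peeling off $A$ to get a smaller pencil, still stably associated to $f$, with a nontrivial common kernel, contradicting ``minimal size''. This gives no contradiction.
\begin{itemize}
\item Conditions (i) and (ii) concern the kernels of the coefficients of $\mathcal{L}_f$ itself. They do not say that $\mathcal{L}_f$ has minimal size among linearisations of $f$.
\item You never explain why the compressed pencil would have a common kernel, or why that would bear on $\mathcal{L}_f$.
\item The fallback has the same defect. The minimal-linearisation results of \cite{HKV18} concern monic pencils from minimal realisations, and nothing shows $\mathcal{L}_f$ is one. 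It is not monic, and $\mathcal{L}_f(0)$ may be singular, since $f(0)$ is not assumed invertible here.
\end{itemize}

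\textbf{The fix.} Your key fact (unit pencils have nilpotent linear part) does finish the proof, but applied directly to $\mathcal{L}_f$ rather than through a compression.

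Suppose $A$ is a unit. Then $\det A$ is a nonzero constant at every level, so $A(0)$ is invertible. The matrices $N_j$ obtained from $A(0)^{-1}$ times the linear coefficients of $A$ satisfy $\det(I+\sum_j N_j\otimes X_j)=1$ identically, so they generate a nilpotent algebra. Pick an invertible constant $T$ making them all strictly upper triangular. Then the first column of $AT$ is constant. Since the block under $A$ is zero, the first column of $P\mathcal{L}_fQ(T\oplus I)$ is constant. So $0\ne Q(T\oplus I)e_1$ lies in the common kernel of all linear coefficients of $\mathcal{L}_f$, contradicting Step 1.

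Now suppose $C$ is a unit. Run the same argument with the matrices $C_jC(0)^{-1}$, where the $C_j$ are the linear coefficients of $C$. For suitable $T$ the last row of $T^{-1}C$ is constant, so the last row of $(I\oplus T^{-1})P\mathcal{L}_fQ$ is constant. This gives a nonzero $u$ with $u^*$ annihilating every coefficient of $x_i$ and $y_i$. These coefficients come in adjoint pairs, so $u$ lies in their common kernel, again contradicting Step 1.

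With this argument in place, or simply citing Cohn's theorem, your proof is complete. The hand argument is a self-contained replacement for the citation, at the cost of the standard joint-nilpotency fact for unit pencils.
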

\begin{proof}
    For each \(i\in\{1,\ldots,g\}\), let \(A_i\) be the coefficient of \(x_i\) in \(\mathcal{L}_f\), i.e.,
    \[
        A_i = \begin{pmatrix}
            0 & K_i & 0 \\
            0 & L_i & 0 \\
            0 & H_i & 0
        \end{pmatrix}\quad\text{and}\quad A_i^* = \begin{pmatrix}
            0 & 0 & 0 \\
            K_i^* & L_i^* & H_i^* \\
            0 & 0 & 0
        \end{pmatrix}.
    \]
    If \(\begin{pmatrix}u\\v\\w\end{pmatrix}\) is in the common kernel of the \(A_i\) and \(A_i^*\) for all \(i\), then 
    \[
        \begin{pmatrix}
            K_iv \\ L_iv \\ H_i v
        \end{pmatrix} = 0\quad\text{and}\quad
        \begin{pmatrix}
            0 \\ K_i^*u + L_i^*v + H_i^*w \\ 0
        \end{pmatrix} = 0.
    \]
    Then \(v\in\bigcap_{i\in\{1,\ldots,g\}}\left(\ker L_i\cap\ker H_i\cap\ker K_i\right)\).
    But \((L,H,K)\) is minimal, so \(v=0\). 
    Consequently \(K_i^*u + H_i^* w=0\) so \(\begin{pmatrix} w\\u\end{pmatrix}\in\bigcap_{i\in\{1,\ldots,g\}}\ker\begin{pmatrix} H_i^* & K_i^*\end{pmatrix}\); again by minimality, \(u=w=0\). 
    So the coefficients \(A_i\) and \(A_i^*\) have no nontrivial common kernel.
    By \cite[Theorem 5.8.8]{Cohn06}, \(\mathcal{L}_f\) is irreducible if and only if \(\mathcal{L}_f\) is an atom.
    Since \(f\) is an atom and \(\mathcal{L}_f\) is stably associated to \(f\), then \(\mathcal{L}_f\) is an atom.
    So \(\mathcal{L}_f\) is irreducible.
\end{proof}

\begin{thm}\label{penciltopoly}
    Let \((L,H,K)\) and \((\tilde L,\tilde H, \tilde K)\) be two minimal line systems with associated polynomials \(f=L+H^*H-K^*K\) and \(\tilde f=\tilde L+\tilde H^*\tilde H-\tilde K^*\tilde K\), and linearisations \[\mathcal{L}:=\mathcal{L}_f=\begin{pmatrix}I&K&0\\K^*&L&H^*\\0&H&-I\end{pmatrix},\quad\mathcal{\tilde L}:=\mathcal{L}_{\tilde f}=\begin{pmatrix}I&\tilde K&0\\\tilde K^*&\tilde L&\tilde H^*\\0&\tilde H&-I\end{pmatrix}.\]
    If \(S\mathcal{L} S^*=r\mathcal{\tilde{L}}\) for some invertible matrix \(S\) and \(r\in\mathbb{R}^\times\), then \(\hat Sf\hat S^*=r\tilde{f}\) for some \(\hat S\in \GL_d(\mathbb{C})\).
\end{thm}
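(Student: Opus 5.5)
The plan is to read the congruence $S\mathcal{L}S^*=r\tilde{\mathcal{L}}$ coefficient by coefficient. Minimality condition (i) should force $S$ to leave the ``middle'' block $\mathbb{C}^d$ of $\mathbb{C}^{dg}\oplus\mathbb{C}^d\oplus\mathbb{C}^{dg}$ invariant. Then $f$ and $\tilde f$ can be recovered from $\mathcal{L}$ and $\tilde{\mathcal{L}}$ as Schur complements (equivalently, as compressions of the inverses to the middle block), and the congruence descends to them.

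\emph{Step 1: coefficient structure.} Let $M=0\oplus\mathbb{C}^d\oplus 0$ and let $\Pi=\begin{pmatrix}0&I_d&0\end{pmatrix}$, so that $\Pi^*$ is the inclusion of $\mathbb{C}^d$ as $M$. As in the proof of Lemma \ref{atomirred}, the coefficient of $x_i$ in $\mathcal{L}$ is $A_i=c_i\Pi$ with $c_i=\begin{pmatrix}K_i\\ L_i\\ H_i\end{pmatrix}$, and the coefficient of $y_i$ is $A_i^*=\Pi^*c_i^*$. Comparing the coefficients of $y_i$ in $S\mathcal{L}S^*=r\tilde{\mathcal{L}}$ gives $SA_i^*S^*=r\tilde A_i^*$ for all $i$.

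\emph{Step 2: $S$ preserves $M$.} The range of $A_i^*$ is $\Pi^*\,\image c_i^*$. Summing over $i$,
\[
\sum_i\image A_i^*=\Pi^*\Big(\sum_i\image c_i^*\Big)=\Pi^*\Big(\bigcap_i\ker c_i\Big)^{\perp}=\Pi^*\Big(\bigcap_i\ker L_i\cap\ker H_i\cap\ker K_i\Big)^{\perp}=M,
\]
where the last equality is condition (i) of minimality. The same computation for $(\tilde L,\tilde H,\tilde K)$ gives $\sum_i\image\tilde A_i^*=M$. Since $S^*$ is invertible, $\image(SA_i^*S^*)=S\,\image A_i^*$, so $SM=M$. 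Hence $S$ restricts to an invertible map on $M$; let $T\in\GL_d(\mathbb{C})$ be defined by $S\Pi^*=\Pi^*T$. Equivalently, $\Pi S^{-1}$ restricted to $M$ is $T^{-1}$, and $S^{-1}\Pi^*=\Pi^*T^{-1}$.

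\emph{Step 3: Schur complement.} The outer block of $\mathcal{L}$ is $I\oplus(-I)$, which is invertible. The Schur complement of $\mathcal{L}$ with respect to this block is
\[
L-\begin{pmatrix}K^*&H^*\end{pmatrix}(I\oplus -I)\begin{pmatrix}K\\ H\end{pmatrix}=L-K^*K+H^*H=f .
\]
Therefore, at every $(X,Y)$ where $f(X,Y)$ is invertible, $\mathcal{L}(X,Y)$ is invertible and
\[
(\Pi\otimes I_n)\,\mathcal{L}(X,Y)^{-1}(\Pi^*\otimes I_n)=f(X,Y)^{-1}.
\]
The same holds for $\tilde{\mathcal{L}}$ and $\tilde f$. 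By the stable association displayed earlier, $\mathscr{Z}(f)=\mathscr{Z}(\mathcal{L})$ and similarly for $\tilde f$. The congruence shows $\mathscr{Z}(\mathcal{L})=\mathscr{Z}(\tilde{\mathcal{L}})$, so these invertibility loci agree.

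\emph{Step 4: compare inverses.} Inverting the congruence gives $\tilde{\mathcal{L}}^{-1}=r\,S^{-*}\mathcal{L}^{-1}S^{-1}$. Compressing to $M$ and using $S^{-1}\Pi^*=\Pi^*T^{-1}$ from Step 2,
\[
\tilde f^{-1}=r\,(T^{-1})^*f^{-1}T^{-1}
\]
at every point outside the free locus (tensoring with $I_n$ throughout). Inverting yields $r\tilde f=T f T^*$ on a Zariski-dense set of each level $n$. If $f$ is not full, both sides vanish identically on every level, so the identity holds there too. A polynomial identity in the free algebra holding at all matrix points of all sizes holds formally, so $\hat S=T$ gives $\hat S f\hat S^*=r\tilde f$.

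Step 2 is the crux. It is the only place minimality is used, and only condition (i) appears to be needed. The remaining care is in the bookkeeping of $\Pi$ versus $\Pi^*$ and of which side $S$ acts on, so that it is $S^{-1}\Pi^*$, not $\Pi S$, that is controlled. Using the $y_i$-coefficients, whose ranges lie in $M$, is what makes this come out correctly.
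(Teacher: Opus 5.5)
Your Steps 1--2 are correct and coincide with the paper's first step. The statement $SM=M$ is exactly $S_{12}=S_{32}=0$, which the paper obtains by comparing linear coefficients blockwise, and you are right that only condition (i) for $(L,H,K)$ is needed there.

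\textbf{The gap is in Steps 3--4.} Comparing compressed inverses gives $r\tilde f(X,Y)=(T\otimes I_n)f(X,Y)(T^*\otimes I_n)$ only at points where $f(X,Y)$ is invertible. Your treatment of the remaining case is false: if $f$ is not full, then $\det f(X,Y)=0$ everywhere, not $f(X,Y)=0$. So neither side vanishes, and Step 4 gives no information at all. The statement has no fullness hypothesis, and minimality does not supply one. For instance, take $g=d=3$, $L=0$, $H=\begin{pmatrix}x_1&x_2&x_3\end{pmatrix}$ and $K=\begin{pmatrix}x_2&x_3&x_1\end{pmatrix}$. Both kernel conditions hold, yet
\[
f=H^*H-K^*K=\begin{pmatrix}H\\ K\end{pmatrix}^{*}\begin{pmatrix}1&0\\ 0&-1\end{pmatrix}\begin{pmatrix}H\\ K\end{pmatrix}
\]
factors through inner size $2<3$, so $f$ is not full. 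Here $H,K$ are $1\times 3$. In the literal $dg\times d$ format, condition (ii) can never hold, because $\bigcap_i\ker\begin{pmatrix}H_i^*&K_i^*\end{pmatrix}$ is the kernel of a $dg\times 2dg$ matrix; so it is a genuine advantage that you use only (i). For full $f$ your argument works once ``each level'' is read as ``each level on which $\det f\not\equiv0$''. These levels are closed under addition, hence include some $n\ge 2$, and no nonzero polynomial of degree $\le 2$ vanishes identically on $\M_n(\mathbb{C})$ for $n\ge2$. This covers the paper's application, where $f$ is unsignatured and hence full, but not the theorem as stated.

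\textbf{The repair} is to keep Step 2 and replace the evaluation argument by the formal Schur-complement identity behind it. With respect to $M\oplus M^\perp$, write $S=\begin{pmatrix}T&B\\ 0&D\end{pmatrix}$ and $\mathcal{L}=\begin{pmatrix}L&N^*\\ N&J\end{pmatrix}$, with $N=\begin{pmatrix}K\\ H\end{pmatrix}$ and $J=I\oplus(-I)$. Since $S$ is block triangular and invertible, $D$ is automatically invertible. So the paper's appeal to (ii) to make $\begin{pmatrix}S_{11}&S_{13}\\ S_{31}&S_{33}\end{pmatrix}=D$ invertible is not needed. The $M^\perp$-diagonal block of the congruence gives $DJD^*=rJ$. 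The constant term of the off-diagonal block gives $BJD^*=0$, hence $B=0$. Then $TLT^*=r\tilde L$ and $DNT^*=r\tilde N$, and, using $D^{-*}JD^{-1}=r^{-1}J$,
\[
TfT^*=TLT^*-(DNT^*)^*\,D^{-*}JD^{-1}\,(DNT^*)=r\tilde L-r\tilde N^*J\tilde N=r\tilde f .
\]
This is an identity in the free algebra and needs no fullness. It is in substance the paper's proof: $B=0$ is its $S_{21}=S_{23}=0$, and the last display replaces its $J$-unitary bookkeeping.

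Alternatively, you can keep your inverse-compression argument by applying it to $L+\lambda I$ and $\tilde L+\tfrac{\lambda}{r}TT^*$. Take real $\lambda$ outside the finitely many values with $\det(L_0+\lambda I)=0$. Condition (i) and $S$ are unchanged, $f+\lambda I$ is full, and $\lambda TT^*$ cancels at the end.
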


\begin{proof}
    Write \(S\) as a \(3\times 3\) block matrix: \[S=\begin{pmatrix}S_{11}&S_{12}&S_{13}\\S_{21}&S_{22}&S_{23}\\S_{31}&S_{32}&S_{33}\end{pmatrix}.\]
    Then \(S\mathcal{L}S^*=r\mathcal{\tilde L}\) gives us the following equations:
    \begin{align*}
        S_{11}S_{11}^*+S_{12}K^*S_{11}^*+S_{11}KS_{12}^*+S_{12}LS_{12}^*+S_{13}HS_{12}^*+S_{12}H^*S_{13}^*-S_{13}S_{13}^*&=rI, \tag{1,1}\\
        S_{11}S_{21}^*+S_{12}K^*S_{21}^*+S_{11}KS_{22}^*+S_{12}LS_{22}^*+S_{13}HS_{22}^*+S_{12}H^*S_{23}^*-S_{13}S_{23}^* &= r\tilde K, \tag{1,2}\\
        S_{11}S_{31}^*+S_{12}K^*S_{31}^*+S_{11}KS_{32}^*+S_{12}LS_{32}^*+S_{13}HS_{32}^*+S_{12}H^*S_{33}^*-S_{13}S_{33}^* &= 0, \tag{1,3}\\
        S_{21}S_{21}^*+S_{22}K^*S_{21}^*+S_{21}KS_{22}^*+S_{22}LS_{22}^*+S_{23}HS_{22}^*+S_{22}H^*S_{23}^*-S_{23}S_{23}^* &= r\tilde L, \tag{2,2}\\
        S_{21}S_{31}^*+S_{22}K^*S_{31}^*+S_{21}KS_{32}^*+S_{22}LS_{32}^*+S_{23}HS_{32}^*+S_{22}H^*S_{33}^*-S_{23}S_{33}^* &= r\tilde H^*, \tag{2,3}\\
        S_{31}S_{31}^*+S_{32}K^*S_{31}^*+S_{31}KS_{32}^*+S_{32}LS_{32}^*+S_{33}HS_{32}^*+S_{32}H^*S_{33}^*-S_{33}S_{33}^* &= -rI. \tag{3,3}
    \end{align*}
    By looking at the coefficient of \(x_i\) in equation (1,1), and the coefficients of \(x_i^*\) in equations (1,2) and (1,3), we have 
    \begin{align*}
        S_{11}K_iS_{12}^*+S_{12}L_iS_{12}^*+S_{13}H_iS_{12}^* &= 0,\\
        S_{12}K_i^*S_{21}^*+S_{12}L_i^*S_{22}^*+S_{12}H_i^*S_{23}^* &= 0,\\
        S_{12}K_i^*S_{31}^*+S_{12}L_i^*S_{32}^*+S_{12}H_i^*S_{33}^* &= 0.
    \end{align*}
    Taking the star of the second and third equation in the above block, and writing in matrix form,  
    \[
        S\begin{pmatrix}K_i\\L_i\\H_i\end{pmatrix}S_{12}^* = 0.
    \]
    Since \(S\) is invertible,
    \[
        \begin{pmatrix}K_i\\L_i\\H_i\end{pmatrix}S_{12}^* = 0.
    \]
    By minimality of \((L,H,K)\), the \(K_i,L_i,H_i\) cannot have any common kernels. 
    Thus \(S_{12}=0\).
    Similarly by looking at the coefficients of \(x_i\) in equations (1,3) and (2,3), and the coefficient of \(x_i^*\) in equation (3,3), we get
    \begin{align*}
        S_{11}K_iS_{32}^*+S_{12}L_iS_{32}^*+S_{13}H_iS_{32}^* &= 0,\\
        S_{21}K_iS_{32}^*+S_{22}L_iS_{32}^*+S_{23}H_iS_{32}^* &= 0,\\
        S_{32}K_i^*S_{31}^*+S_{32}L_i^*S_{32}^*+S_{32}H_i^*S_{33}^* &= 0.
    \end{align*}
    Taking the star of the third equation in the above block, and writing in matrix form,
    \[
        S\begin{pmatrix}K_i\\L_i\\H_i\end{pmatrix}S_{32}^* = 0.
    \]
    Again since \(S\) is invertible and by minimality of \((L,H,K)\), we get that \(S_{32}=0\).
    Now equating the coefficients of \(x_i\) in equation (1,2), \[S_{11}K_iS_{22}^*+S_{13}H_iS_{22}^*=r\tilde K_i.\]
    Taking the star and writing in block matrix form, \[S_{22}\begin{pmatrix}K_i^*&H_i^*\end{pmatrix}\begin{pmatrix}S_{11}^*\\S_{13}^*\end{pmatrix}=r\tilde K_i^*.\]
    Similarly from equation (2,3), the coefficients of \(x_i^*\) tell us that \[S_{22}\begin{pmatrix}K_i^*&H_i^*\end{pmatrix}\begin{pmatrix}S_{31}^*\\S_{33}^*\end{pmatrix}=r\tilde H_i^*.\]
    It follows that \[S_{22}\begin{pmatrix}K_i^*&H_i^*\end{pmatrix}\begin{pmatrix}S_{11}^*&S_{31}^*\\S_{13}^*&S_{33}^*\end{pmatrix}=r\begin{pmatrix}\tilde K_i^*&\tilde H_i^*\end{pmatrix}.\]
    By minimality of \((L,H,K)\), the kernels of \(\begin{pmatrix}\tilde K_i^*&\tilde H_i^*\end{pmatrix}\) for each \(i\) must intersect to zero. 
    Consequently, the above equation implies that \(\begin{pmatrix}S_{11}^*&S_{31}^*\\S_{13}^*&S_{33}^*\end{pmatrix}\) is invertible.
    But equating the constant terms in equations (1,2) and (2,3), we get
    \begin{align*}
        S_{21}S_{11}^*-S_{23}S_{13}^* &= 0,\\
        S_{21}S_{31}^*-S_{23}S_{33}^* &= 0.
    \end{align*}
    That is, \[\begin{pmatrix}S_{21}&-S_{23}\end{pmatrix}\begin{pmatrix}S_{11}^*&S_{31}^*\\S_{13}^*&S_{33}^*\end{pmatrix}=0.\]
    Thus \(S_{21}=0\) and \(S_{23}=0\).
    So our matrix \(S\) has the form \[S=\begin{pmatrix}S_{11}&0&S_{13}\\0&S_{22}&0\\S_{31}&0&S_{33}\end{pmatrix}\] where \(S_{22}\) must be invertible since \(S\) is invertible.
    The equations of \(S\mathcal{L}S^*=r\mathcal{\tilde L}\) are now 
    \begin{align*}
        S_{11}S_{11}^*-S_{13}S_{13}^* &= rI,\tag{1}\\
        S_{11}KS_{22}^*+S_{13}HS_{22}^* &= r\tilde K,\tag{2}\\
        S_{11}S_{31}^*-S_{13}S_{33}^* &= 0, \tag{3}\\
        S_{22}LS_{22}^* &= r\tilde L, \tag{4}\\
        S_{22}K^*S_{31}^*+S_{22}H^*S_{33}^* &= r\tilde H^*, \tag{5}\\
        S_{31}S_{31}^*-S_{33}S_{33}^* &= -rI. \tag{6}
    \end{align*}
    From equations (1), (3) and (6), 
    \[\begin{pmatrix}S_{11}&S_{13}\\S_{31}&S_{33}\end{pmatrix}\begin{pmatrix}I&0\\0&-I\end{pmatrix}\begin{pmatrix}S_{11}^*&S_{31}^*\\S_{13}^*&S_{33}^*\end{pmatrix}=r\begin{pmatrix}I&0\\0&-I\end{pmatrix}\] and since \(\begin{pmatrix}I&0\\0&-I\end{pmatrix}\) is an involution then \[\begin{pmatrix}I&0\\0&-I\end{pmatrix}\begin{pmatrix}S_{11}&S_{13}\\S_{31}&S_{33}\end{pmatrix}\begin{pmatrix}I&0\\0&-I\end{pmatrix}\begin{pmatrix}S_{11}^*&S_{31}^*\\S_{13}^*&S_{33}^*\end{pmatrix}=rI.\]
    Consequently 
    \[
        \displaystyle\frac{1}{\sqrt{r}}\begin{pmatrix}I&0\\0&-I\end{pmatrix}\begin{pmatrix}S_{11}&S_{13}\\S_{31}&S_{33}\end{pmatrix} \quad\text{ and }\quad \displaystyle\frac{1}{\sqrt{r}}\begin{pmatrix}I&0\\0&-I\end{pmatrix}\begin{pmatrix}S_{11}^*&S_{31}^*\\S_{13}^*&S_{33}^*\end{pmatrix}
    \]
    are inverses of one another, so \[\begin{pmatrix}I&0\\0&-I\end{pmatrix}\begin{pmatrix}S_{11}^*&S_{31}^*\\S_{13}^*&S_{33}^*\end{pmatrix}\begin{pmatrix}I&0\\0&-I\end{pmatrix}\begin{pmatrix}S_{11}&S_{13}\\S_{31}&S_{33}\end{pmatrix}=r\begin{pmatrix}I&0\\0&I\end{pmatrix}.\]
    That is, \[\begin{pmatrix}S_{11}^*S_{11}-S_{31}^*S_{31} & S_{11}^*S_{13}-S_{31}^*S_{33} \\ S_{33}^*S_{31}-S_{13}^*S_{11} & S_{33}^*S_{33}-S_{13}^*S_{13}\end{pmatrix}=r\begin{pmatrix}I&0\\0&I\end{pmatrix}\]
    and so \(S_{11}^*S_{11}-S_{31}^*S_{31}=rI\), \(S_{11}^*S_{13}-S_{31}^*S_{33}=0\), \(S_{33}^*S_{31}-S_{13}^*S_{11}=0\) and \(S_{33}^*S_{33}-S_{13}^*S_{13}=rI\).
    Using equations (2) and (5), 
    \begin{align*}
        r^2S_{22}^{-1}\left(\tilde H^*\tilde H-\tilde K^*\tilde K\right)S_{22}^{-*} = K^*&(S_{31}^*S_{31}-S_{11}^*S_{11})K+K^*(S_{31}^*S_{33}-S_{11}^*S_{13})H \\ &+H^*(S_{33}^*S_{31}-S_{13}^*S_{11})K + H^*(S_{33}^*S_{33}-S_{13}^*S_{13})H.
    \end{align*} 
    It follows that
    \begin{align*}
        r^2S_{22}^{-1}\left(\tilde H^*\tilde H-\tilde K^*\tilde K\right)S_{22}^{-*} &= K^*(-rI)K+K^*(0)H+H^*(0)K+H^*(rI)H \\
        &= r(H^*H-K^*K).
    \end{align*}
    So \(S_{22}(H^*H-K^*K)S_{22}^*=r(\tilde{H}^*\tilde{H}-\tilde{K}^*\tilde{K})\). 
    From equation (4), \(S_{22}LS_{22}^*=r\tilde{L}\) so \[S_{22}fS_{22}^*=r\tilde{f}.\]
\end{proof}

\begin{thm}\label{positivstellensatz}
    Let \(f\) and \(\tilde f\) be unsignatured atomic hereditary quadratic polynomials, with \(f(0)\) and \(\tilde f(0)\) positive definite.
    Then \[\mathcal{D}(f)=\mathcal{D}(\tilde f)\iff f=S\tilde{f}S^*\text{ for some }S\in \GL_d(\mathbb{C}).\]
\end{thm}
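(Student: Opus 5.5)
The easy direction is immediate. If \(f=S\tilde fS^*\) then for every \(X\) we have \(f(X,X^*)=(S\otimes I_n)\tilde f(X,X^*)(S\otimes I_n)^*\). Congruence by an invertible matrix preserves positive semidefiniteness, so \(\mathcal{D}_n(f)=\mathcal{D}_n(\tilde f)\) for all \(n\).

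For the converse, the plan is to pass from equal positivity domains, to equal free loci, to similarity of irreducible linearisations, and finally back to the polynomials via Theorem \ref{penciltopoly}.

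\emph{Step 1: minimal presentations.} Present \(f\) and \(\tilde f\) by minimal line systems \((L,H,K)\) and \((\tilde L,\tilde H,\tilde K)\). Since \(f(0)\) is positive definite, the constant term is already invertible. By Lemma \ref{atomirred}, the linearisations \(\mathcal{L}_f\) and \(\mathcal{L}_{\tilde f}\) are irreducible Hermitian pencils, and their constant terms \(I\oplus L_0\oplus(-I)\) are invertible. Since \(f(0)>0\), the domain \(\mathcal{D}_1(f)\) has nonempty interior.

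\emph{Step 2: equal free loci.} An atom is trivially a direct sum of one atom. It is also positivity-minimal: a smaller-size \(g\) with the same domain is ruled out, and I would argue this via Proposition \ref{CIunit} together with atomicity, although this point needs care. Granting this, Theorem \ref{posdense} gives, for \(n\) a multiple of a common \(n_0\), that \(\partial\mathcal{D}_n(f)\) is Zariski dense in \(\mathscr{Z}_n(f)\), and likewise for \(\tilde f\). Equal domains have equal boundaries, so \(\mathscr{Z}_n(f)=\mathscr{Z}_n(\tilde f)\) for all \(n\) in the arithmetic progression \(n_0\mathbb{N}\). The levels for other \(n\) are handled by the propagation of free loci under direct sums, or simply because the HKV results only need infinitely many levels. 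Hence \(\mathscr{Z}(\mathcal{L}_f)\) and \(\mathscr{Z}(\mathcal{L}_{\tilde f})\) agree on these levels.

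\emph{Step 3: from free loci to similarity.} By the irreducible free locus theorem of \cite{HKV18}/\cite{HKV20}, two irreducible pencils with invertible constant terms and the same free locus are equivalent: \(P\mathcal{L}_fQ=\mathcal{L}_{\tilde f}\) with \(P,Q\) invertible. Both pencils are Hermitian, so applying \({}^*\) gives \(Q^*\mathcal{L}_fP^*=\mathcal{L}_{\tilde f}\). Therefore \(\mathcal{L}_f\) commutes, in the intertwining sense, with \(Q^{*}{}^{-1}P\), that is \((Q^*)^{-1}P\mathcal{L}_f=\mathcal{L}_f P^*Q^{-1}\). Irreducibility forces this intertwiner to be a scalar, by the Schur-lemma uniqueness of equivalence for irreducible pencils. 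Writing \(P=cQ^*\) and comparing with the Hermitian property shows \(c\in\mathbb{R}^\times\). This gives \(S\mathcal{L}_fS^*=r\mathcal{L}_{\tilde f}\) with \(S=Q^*\) and \(r=c^{-1}\) real.

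\emph{Step 4: back to the polynomials.} Theorem \ref{penciltopoly} now yields \(\hat Sf\hat S^*=r\tilde f\). Evaluating at \(0\), both \(f(0)\) and \(\tilde f(0)\) are positive definite, so \(r>0\). Rescaling \(\hat S\) by \(r^{-1/2}\) and inverting gives \(f=S\tilde fS^*\).

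\emph{Main obstacle.} The hardest part is Step 3, in particular extracting a congruence with a \emph{real} scalar from a mere equivalence. I would try the Schur argument above. If uniqueness of \(P,Q\) up to scalars is not directly available in the literature, I would instead use that the free locus determines the pencil up to equivalence and then apply a polar-type symmetrisation. A secondary concern is verifying positivity-minimality in Step 2, which I expect follows from the atom being coefficient-irreducible after applying Proposition \ref{CIunit}.
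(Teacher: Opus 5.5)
Your outline is the paper's proof essentially step for step. You use minimal line systems and Lemma~\ref{atomirred}, then Theorem~\ref{posdense} to get \(\mathscr{Z}_n(f)=\mathscr{Z}_n(\tilde f)\) for \(n\in n_0\mathbb{N}\). The \(I_{n_0}\otimes Y\) direct-sum trick then reaches every level; it only needs multiples of \(n_0\), so the paper's detour through all \(n\ge n_0\) is unnecessary. Next come \cite[Theorem 2.11]{HKV20} and the stabiliser argument, which the paper cites as \cite[Lemma 2.10]{HKV20}. Your Schur argument proves it correctly: \(AL=LB\) with \(L(0)\) invertible makes \(A\) commute with every \(L_iL(0)^{-1}\). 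Finally you apply Theorem~\ref{penciltopoly}. Fixing the sign by evaluating at \(0\) is a slightly cleaner version of the paper's domain argument.

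The one place where you go beyond the paper does not work as planned: atoms need not be positivity-minimal. Proposition~\ref{CIunit} actually points the other way, since it splits an atom as \(f'\oplus u\) with \(u\) a unit and \(\mathcal{D}(f')=\mathcal{D}(f)\). For example, \((1-yx)\oplus 1\) is an atom (stably associated to \(1-yx\)) with the same domain as the \(1\times 1\) polynomial \(1-yx\). You do not need minimality, though. For a single atom (\(r=1\)), the proof of Theorem~\ref{posdense} uses it only to find a level-\(n\) point outside \(\mathcal{D}_n(f)\), and unsignatured-ness at level \(n\) supplies that point. The paper applies the theorem without checking this hypothesis.

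There is a genuine gap in Step 1, inherited from the paper: nothing guarantees that the given \(f\) admits a minimal line system. The reduction described before Proposition~\ref{GLactLS} replaces \(f\) by a compression with the same domain, not by \(f\) itself. Without a minimal line system the statement is false. Take \(g=1\) and
\(u=\begin{pmatrix}1&x\\y&1+yx\end{pmatrix}=\begin{pmatrix}1&0\\y&1\end{pmatrix}\begin{pmatrix}1&x\\0&1\end{pmatrix}\). This is a unit with \(u(X,X^*)=T^*T>0\), where \(T=\begin{pmatrix}I&X\\0&I\end{pmatrix}\). Put \(f=(1-yx)\oplus u\) and \(\tilde f=(1-yx)\oplus I_2\). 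Both are unsignatured atoms with \(f(0)=\tilde f(0)=I_3\) and \(\mathcal{D}(f)=\mathcal{D}(\tilde f)=\mathcal{D}(1-yx)\). Yet \(f\) has a nonzero \(x\)-coefficient and \(\tilde f\) has none, so \(f\neq S\tilde fS^*\) for every \(S\in\GL_3(\mathbb{C})\). Consistently, \(f\) has no minimal line system. Condition (ii) forces \(\ker H_1\cap\ker K_1=\spn\{e_2\}\), which lies in \(\ker L_1\), so (i) fails and Lemma~\ref{atomirred} cannot be invoked. Once you add the hypothesis that \(f\) and \(\tilde f\) are minimally presented, your argument goes through exactly as the paper's does.
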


\begin{proof}
    If \(f=S\tilde f S^*\) for some \(S\in \GL_d(\mathbb{C})\) then clearly \(\mathcal{D}(f)=\mathcal{D}(\tilde f)\).
    
    For the converse, suppose \(\mathcal{D}(f)=\mathcal{D}(\tilde f)\) and let \((L,H,K)\), \((\tilde{L},\tilde{H},\tilde{K})\) be minimal line systems for \(f\), \(\tilde f\) respectively.
    Since \(\mathcal{D}(f)=\mathcal{D}(\tilde f)\), then \(\partial\mathcal{D}(f)=\partial\mathcal{D}(\tilde f)\)
    and so by Theorem \ref{posdense}, there exists \(n_0\in\mathbb{N}\) for which \(\mathscr{Z}_n(f)=\mathscr{Z}_n(\tilde f)\) for all \(n\in n_0\cdot\mathbb{N}\).
    Since \(f(0)>0\) and \(\tilde f(0)>0\), we can choose \(n_0\) so that \(\mathscr{Z}_n(f)=\mathscr{Z}_n(\tilde f)\) for all \(n\geq n_0\).
    Recall the linearisations for \(f\) and \(\tilde f\) given by
    \[
        \mathcal{L}\defeq\mathcal{L}_f=\begin{pmatrix}I&K&0\\K^*&L&H^*\\0&H&-I\end{pmatrix},\quad
        \mathcal{\tilde L}\defeq\mathcal{L}_{\tilde f}=\begin{pmatrix}I&\tilde K&0\\\tilde K^*&\tilde L&\tilde H^*\\0&\tilde H&-I\end{pmatrix}.
    \]
    Since \(\mathscr{Z}(f)=\mathscr{Z}(\mathcal{L})\) and \(\mathscr{Z}(\tilde f)=\mathscr{Z}(\mathcal{\tilde L})\) then \(\mathscr{Z}_n(\mathcal{L})=\mathscr{Z}_n(\mathcal{\tilde L})\) for all \(n\geq n_0\). 
    If there were some \(k\in\mathbb{N}\) with \(k<n_0\) for which \(\mathscr{Z}_k(\mathcal{L})\neq\mathscr{Z}_k(\mathcal{\tilde L})\), then either \(\mathscr{Z}_k(\mathcal{L})\backslash\mathscr{Z}_k(\mathcal{\tilde L})\neq\varnothing\) or \(\mathscr{Z}_k(\mathcal{\tilde L})\backslash\mathscr{Z}_k(\mathcal{L})\neq\varnothing\). 
    Suppose \(\mathscr{Z}_k(\mathcal{L})\backslash\mathscr{Z}_k(\mathcal{\tilde L})\neq\varnothing\) (the other case follows similarly), then there is some \(Y\in \M_k(\mathbb{C})^g\) for which \(\det\mathcal{L}(Y)=0\) but \(\det\mathcal{\tilde L}(Y)\neq0\).
    Considering the Kronecker product \(I_{n_0}\otimes Y\in \M_{kn_0}(\mathbb{C})^g\), we get that 
    \[
        \det\mathcal{L}(I_{n_0}\otimes Y)=\det\bigoplus_{i=1}^{n_0}\mathcal{L}(Y)=\left(\det\mathcal{L}(Y)\right)^{n_0}=0,
    \] 
    while \(\det\mathcal{\tilde L}(I_{n_0}\otimes Y)\neq0\) by a similar computation.
    This is a contradiction: \(n_0k> n_0\), and \(\mathscr{Z}_n(\mathcal{L})=\mathscr{Z}_{n}(\mathcal{\tilde L})\) for all \(n\geq n_0\).
    Thus we must have that \(\mathscr{Z}(\mathcal{L})=\mathscr{Z}(\mathcal{\tilde L})\). 
    By Lemma \ref{atomirred}, \(\mathcal{L}\) and \(\mathcal{\tilde L}\) are irreducible, 
    so by \cite[Theorem 2.11]{HKV20}, there exists \(P,Q\in\text{GL}_m(\mathbb{C})\) such that \(\mathcal{\tilde L}=P\mathcal{L}Q\).
    However, by the self-adjoint property of \(\mathcal{\tilde L}\) and \(\mathcal{L}\), 
    \[
        \mathcal{\tilde L}^*=Q^*\mathcal{L}P^*=Q^*P^{-1}\mathcal{\tilde L}Q^{-1}P^*=(Q^*P^{-1})\mathcal{\tilde L}(P^{-*}Q)^{-1}.
    \]
    By \cite[Lemma 2.10]{HKV20}, we must have \(Q^*P^{-1}=t I\) and \(P^{-*}Q=t I\) for some \(t\in\mathbb{C}^\times\).
    That is, \(Q^*=t P\) and \(Q=t P^*\); it follows that \(t\) is real.
    Consequently, \(\mathcal{\tilde L}=t P\mathcal{L}P^*\) where \(t\in\mathbb{R}^\times\).
    Writing \(P_0=\sqrt{\abs{t}}P\), we get \(\mathcal{\tilde L}=\pm P_0\mathcal{L}P_0^*\) where \(P_0\in\text{GL}_m(\mathbb{C})\).
    By Theorem \ref{penciltopoly}, \(f=\pm S\tilde f S^*\) for some \(S\in\text{GL}_n(\mathbb{C})\).
    If \(f=-S\tilde fS^*\), then \(\mathcal{D}(\tilde f)=\mathcal{D}(f)=\mathcal{D}(-\tilde f)\).
    But \(\tilde f(0)>0\), so \(f=S\tilde{f}S^*\).
\end{proof}

\begin{comment}
The above theorem only applies to atoms, but does not work on e.g. Example \ref{annulus}. 
However, the same result holds for self-adjoint hereditary quadratics of the form \(f_1\oplus\cdots\oplus f_r\) where \(f_1,\ldots,f_r\) are atoms.
\begin{cor}
    Let \(f=f_1\oplus\cdots\oplus f_r\) and \(g=g_1\oplus\cdots\oplus g_s\) be positivity-minimal and unsignatured with \(f(0)\) and \(g(0)\) positive definite.
    Suppose that \(f_1,\ldots,f_r\) and \(g_1,\ldots,g_s\) are atomic self-adjoint hereditary quadratics. 
    Then 
    \[
        \mathcal{D}(f)=\mathcal{D}(g)\iff f=SgS^*\text{ for some }S\in\GL_d(\mathbb{C}).
    \]
\end{cor}
\begin{proof}
    
\end{proof}
\end{comment}

%%%%%%%%%%%%%%%%%%%%%%%%%%%%%%%%%%%%%%%%%%%%%%%%%%%%
% GAMMA CONVEXITY
%%%%%%%%%%%%%%%%%%%%%%%%%%%%%%%%%%%%%%%%%%%%%%%%%%%%
\section{Gamma Convexity}\label{GammaConvexity}
Hereditary quadratic noncommutative polynomials appear in the theory of \(\Gamma\)-convexity. We will begin by outlining some of the theory that can be found in \cite{JKMMP20}. 
The following are definitions common in the theory of noncommutative functions.
We will work over the field \(\mathbb{C}\). 
Denote the disjoint union of matrix tuples by
\[
    \mathcal{M}_g:=\bigsqcup_{n\in\mathbb{N}}\M_n(\mathbb{C})^g.
\]

\begin{defn}
    A \emph{free set} is a subset \(\Omega\subset\mathcal{M}_g\) with the following properties:
    \begin{itemize}
        \item[(i)] closed under direct sum: if \(X=(X_1,\ldots,X_g)\in\Omega\cap\M_n(\mathbb{C})^g\) and \(Y=(Y_1,\ldots,Y_g)\in\Omega\cap\M_m(\mathbb{C})^g\) then \(X\oplus Y=(X_1\oplus Y_1,\ldots,X_g\oplus Y_g)\in\Omega\cap\M_{n+m}(\mathbb{C})^g\);
        \item[(ii)] closed under simultaneous unitary similarity: if \(X=(X_1,\ldots,X_g)\in\Omega\cap\M_n(\mathbb{C})^g\) and \(U\in \M_n(\mathbb{C})\) unitary, then \(U^*XU=(U^*X_1U,\ldots,U^*X_gU)\in\Omega\cap\M_n(\mathbb{C})^g\);
        \item[(iii)] closed under reducing subspaces: if \(X=(X_1,\ldots,X_g)\in\Omega\cap\M_n(\mathbb{C})^g\) and \(S\subset\mathbb{C}^n\) a common reducing subspace for \(X_1,\ldots,X_g\), then \(\restr{X}{S}\,=(\restr{X_1}{S}\,,\ldots,\restr{X_g}{S}\,)\in\Omega\).
    \end{itemize}
\end{defn}

Let \(\Gamma\) be a finite tuple of noncommutative polynomials containing every degree one monomial, and closed under involution. 
That is, for some \(r\in\mathbb{N}\) and \(\gamma_1,\ldots,\gamma_r\in\mathbb{C}\mon\),
\[
    \Gamma = \left(x_1,\ldots,x_g,y_1,\ldots,y_g,\gamma_1,\gamma_1^*,\ldots,\gamma_r,\gamma_r^*\right).\tag{\(\ddagger\)}
\]

\begin{defn}
    A pair \((X,V)\), where \(X\in\M_n(\mathbb{C})^g\) and \(V:\mathbb{C}^m\to\mathbb{C}^n\) an isometry for some \(m,n\in\mathbb{N}\), is a \(\Gamma\)-pair if 
    \[
        V^*\gamma(X,X^*)V=\gamma(V^*XV,V^*X^*V)\quad\forall \gamma\in\Gamma.
    \]
\end{defn}
Let \(\mathcal{C}_\Gamma\) be the collection of all \(\Gamma\)-pairs.
\begin{defn}
    A subset \(\mathcal{K}\subset\mathcal{M}_g\) is \emph{\(\Gamma\)-convex} if \(\mathcal{K}\) is free and 
    \[
       (X,V)\in\mathcal{C}_\Gamma\text{ and } X\in\mathcal{K}\implies V^*XV\in\mathcal{K}.
    \]
\end{defn}

\begin{defn}
    A \emph{\(\Gamma\)-pencil}, for \(\Gamma\) of the form \((\ddagger)\), is a self-adjoint matrix polynomial \(L\in\M_d(\mathbb{C}\mon)\) of the form
    \[
        L = A_0 + \sum_{i=1}^{g}(A_ix_i+A_i^*y_i)+\sum_{j=1}^{r}(B_j\gamma_j+B_j^*\gamma_j^*).
    \]
    Here, \(A_0\) is Hermitian.
\end{defn}

\begin{exa}\label{hqp is gpen}
    Let \(\gamma_{(j-1)g+k}=y_jx_k\) for all \(j,k\in\{1,\ldots,g\}\).
    For this choice of \(\Gamma\), a \(\Gamma\)-pencil is a self-adjoint matrix polynomial 
    \[
        f = A_0 + \sum_{j=1}^{g}A_jx_j + \sum_{j=1}^{g}A_j^*y_j + \sum_{j,k\in\left[g\right]}C_{j,k}y_jx_k
    \]
    where \(C_{j,k}^*=C_{k,j}\).
    These are precisely our hereditary quadratic polynomials.
    This particular choice of \(\Gamma\) will be called \(\Gamma_{yx}\) for brevity.
\end{exa}

\begin{rem}\label{gpengconv}
    For a \(\Gamma\)-pencil \(L\), it is clear that \(\mathcal{D}(L)\) is (or, can be identified with) a \(\Gamma\)-convex set.
    The positivity domain is a free set, so all one needs to check is if \((V^*XV,V^*X^*V)\in\mathcal{D}(L)\) for each \(\Gamma\)-pair \((X,V)\) with \(X\in\mathcal{D}(L)\), but this follows from \(\gamma(V^*XV,V^*X^*V)=V^*\gamma(X,X^*)V\) so that
    \begin{align*}
        L(V^*XV,V^*X^*V) &= A_0\otimes I_m + \sum_{\gamma\in\Gamma} A_\gamma\otimes\gamma(V^*XV,V^*X^*V) \\
        &= A_0\otimes V^*V + \sum_{\gamma\in\Gamma}A_\gamma\otimes V^*\gamma(X,X^*)V \\
        &= (I_d\otimes V^*)L(X,X^*)(I_d\otimes V).
    \end{align*}
\end{rem}

We now classify \(\Gamma\)-pairs for the choice of \(\Gamma\) in Example \ref{hqp is gpen}.
\begin{prop}
    Let \(f\in\M_d(\mathbb{C}\mon)\) be a self-adjoint hereditary quadratic.
    Let \((\underline{X},W)\) be a \(\Gamma_{xy}\)-pair.
    Then either \(W\) is a unitary matrix, or \(\underline{X}=(X_1,\ldots,X_g)\in\M_n(\mathbb{C})^g\) have a nontrivial common invariant subspace and \(W\) projects onto the invariant subspace.
    % That is, if \((\underline{X},W)\in\mathcal{C}_\Gamma\) then 
    % \[
    %     (\underline{X},\underline{X}^*)\in\mathcal{D}(f) \implies \underline{X}=(X_1,\ldots,X_g) \text{ have a common invariant subspace.}
    %\]
\end{prop}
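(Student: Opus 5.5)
We have to see what the \(\Gamma_{yx}\)-pair condition says for the monomials \(y_jx_k\) (the statement writes \(\Gamma_{xy}\), but this is the \(\Gamma_{yx}\) of Example \ref{hqp is gpen}). For the degree-one elements \(x_j,y_j\) the condition \(V^*\gamma(X,X^*)V=\gamma(V^*XV,V^*X^*V)\) holds trivially. So \((\underline{X},W)\) is a \(\Gamma_{yx}\)-pair exactly when
\[
W^*X_j^*X_kW = W^*X_j^*WW^*X_kW\quad\forall j,k\in[g].
\]
Set \(P=WW^*\), the orthogonal projection onto \(\image W\), and \(P^\perp=I-P\). Since \(W^*X_j^*X_kW-W^*X_j^*PX_kW=W^*X_j^*P^\perp X_kW\), the condition becomes
\[
(P^\perp X_jW)^*(P^\perp X_kW)=0\quad\forall j,k.
\]

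The key step is to take \(j=k\). This gives \((P^\perp X_kW)^*(P^\perp X_kW)=0\), hence \(P^\perp X_kW=0\) for every \(k\). Therefore \(X_k(\image W)\subseteq\image W\) for all \(k\), so \(S\defeq\image W\) is a common invariant subspace of \(X_1,\ldots,X_g\). Conversely, if \(S\) is invariant then \(P^\perp X_kW=0\) and the pair condition holds, so the characterisation is an equivalence. Only the diagonal entries are needed; the off-diagonal equations then follow automatically.

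It remains to split into cases. If \(S=\mathbb{C}^n\), then \(W\) is an isometry onto \(\mathbb{C}^n\), so \(m=n\) and \(W\) is unitary. Otherwise \(S\) is a proper subspace, and it is nonzero as long as \(m\geq1\). Then \(S\) is a nontrivial common invariant subspace, and \(W\) maps isometrically onto it, with \(WW^*\) the projection onto it. This is the intended meaning of "\(W\) projects onto the invariant subspace."

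There is no real obstacle here. The only points needing care are the reformulation via \(P^\perp\) and the positivity argument \(A^*A=0\Rightarrow A=0\). Notice also that \(f\) plays no role in the proof: the statement holds for any tuple \(\underline{X}\).
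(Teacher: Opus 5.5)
Your argument is correct and is essentially the paper's: the paper writes \(W=U^*\begin{pmatrix}I_m\\0\end{pmatrix}V\) via the SVD and block-decomposes \(UX_iU^*\). It then notes that the pair condition becomes \(C_j^*C_i=0\) for the lower-left blocks and sets \(i=j\); your \(P^\perp X_kW\) is exactly that block written coordinate-free. Your version also gets right two details the paper's write-up slips on: the adjoint \(X_j^*\) in the pair condition, and the invariant subspace being \(\image W\), of dimension \(m\), rather than a subspace isomorphic to \(\mathbb{C}^{n-m}\).
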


\begin{proof}
    Let \((\underline{X},W)\in\mathcal{C}_{\Gamma_{xy}}\) so that 
    \[
        W^*X_jX_iW = W^*X_j^*WW^*X_iW\quad\forall i,j\in\{1,\ldots,g\}.\tag{\(\dagger\)}
    \]
    Since \(W^*W=I\) then \(W=U^*\begin{pmatrix}I_m\\0\end{pmatrix}V\) for some unitaries \(U,V\) (by SVD). 
    Decomposing \(UX_iU^*\) along \(\mathbb{C}^n\cong\mathbb{C}^m\oplus\mathbb{C}^{n-m}\),
    \[
        U^*X_iU = \begin{pmatrix} 
            A_i & B_i \\
            C_i & D_i 
        \end{pmatrix}.
    \]  
    Then (\(\dagger\)) becomes \(C_j^*C_i = 0\) for all \(i,j\in\{1,\ldots,g\}\).
    In particular, \(i=j\) gives \(C_i=0\) for all \(i\in\{1,\ldots,g\}\) so that \(UX_iU^*\) are all block-upper-triangular.
    Consequently, \(UX_iU^*\) have common invariant subspace isomorphic to \(\mathbb{C}^{n-m}\).
\end{proof}

The theory of \(\Gamma\)-convexity has a hyperplane separation theorem.
The following is \cite[Theorem 2.4]{JKMMP20}.

\begin{thm}[Jury-Klep-Mancuso-McCullough-Pascoe]
    Let \(\mathcal{K}\) be a \(\Gamma\)-convex set containing \(0\), and suppose \(\Gamma(0)=0\).
    Let \(\mathcal{C}\) be the matrix-convex hull of \(\Gamma(\mathcal{K})\).
    If there exists \(Y\in\M_n(\mathbb{C})^g\) so that \(\Gamma(Y)\notin\overline{\mathcal{C}}\), then there exists a \(\Gamma\)-pencil \(L\) of size \(n\) such that \(L(\mathcal{K})\) is positive definite, but \(L(Y)\) is not positive semidefinite. 
    In particular, if \(\mathcal{C}\) is closed, then for each \(Y\notin\mathcal{K}(n)\) there exists a monic \(\Gamma\)-pencil \(L\) of size \(n\) such that \(L(\mathcal{K})\) is positive definite, but \(L(Y)\) is not positive semidefinite.
\end{thm}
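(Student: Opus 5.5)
The plan is to transport the problem through the map \(\Gamma\) into the classical matrix-convex setting, apply the Effros–Winkler separation theorem there, and pull the separating pencil back along \(\Gamma\).

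\emph{Step 1: pass to Hermitian coordinates.} Write \(\Gamma=(\gamma_1,\ldots,\gamma_N)\). Because \(\Gamma\) is closed under \(^*\), for every \(X\) the tuple \(\Gamma(X,X^*)\) is closed under adjoints. I would therefore replace each pair \(\gamma,\gamma^*\) by the Hermitian pair
\[
\tfrac12(\gamma+\gamma^*),\qquad \tfrac{1}{2i}(\gamma-\gamma^*).
\]
This identifies \(\Gamma(\mathcal{K})\) with a subset of tuples of Hermitian matrices by a real-linear bijection. Matrix convex hulls and closures are preserved under this identification, so \(\mathcal{C}\) becomes a matrix convex set of Hermitian tuples. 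Moreover \(0\in\mathcal{C}\), since \(0\in\mathcal{K}\) and \(\Gamma(0)=0\).

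\emph{Step 2: Effros–Winkler separation.} The point \(\Gamma(Y)\) lies at level \(n\) and outside the closed matrix convex set \(\overline{\mathcal{C}}\), which contains \(0\). By the Effros–Winkler theorem there is a monic Hermitian linear pencil \(\Lambda=I_n+\sum_k A_k z_k\) of size \(n\) with the following two properties:
\begin{itemize}
\item \(\Lambda(Z)\ge0\) for every \(Z\in\overline{\mathcal{C}}\);
\item \(\Lambda(\Gamma(Y))\) is not positive semidefinite.
\end{itemize}
I expect this step to be the main obstacle. The standard statement yields exactly this kind of pencil when the convex set contains \(0\), but the size bookkeeping needs care: the separating pencil must be of size \(n\), and the pencil must be Hermitian in the original \(\gamma,\gamma^*\) coordinates. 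In terms of the original variables, \(\Lambda\) reads \(I+\sum_\gamma A_\gamma\gamma\) with \(A_{\gamma^*}=A_\gamma^*\), which is precisely the form of a \(\Gamma\)-pencil.

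\emph{Step 3: upgrade to strict positivity.} Replace \(\Lambda\) by
\[
\Lambda_\delta=I+(1-\delta)(\Lambda-I),\qquad 0<\delta<1.
\]
On \(\overline{\mathcal{C}}\) this gives
\[
\Lambda_\delta=\delta I+(1-\delta)\Lambda\ge\delta I>0.
\]
Failure of positive semidefiniteness is an open condition and depends continuously on \(\delta\). Hence \(\Lambda_\delta(\Gamma(Y))\) is still not positive semidefinite once \(\delta\) is small enough.

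\emph{Step 4: pull back along \(\Gamma\).} Set \(L=\Lambda_\delta\circ\Gamma\), which is a monic \(\Gamma\)-pencil of size \(n\). For \(X\in\mathcal{K}\) we have \(\Gamma(X)\in\mathcal{C}\), so \(L(X)=\Lambda_\delta(\Gamma(X))\) is positive definite. On the other hand, \(L(Y)=\Lambda_\delta(\Gamma(Y))\) is not positive semidefinite.

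\emph{The "in particular" clause.} Assume \(\mathcal{C}\) is closed and \(Y\notin\mathcal{K}(n)\). It suffices to show \(\Gamma(Y)\notin\mathcal{C}\). Suppose instead that \(\Gamma(Y)=V^*\bigl(\Gamma(X)\bigr)V\) for some \(X\in\mathcal{K}\) and isometry \(V\), using that \(\mathcal{K}\) is closed under direct sums to realise a matrix convex combination in this form. Since \(\Gamma\) contains the degree-one monomials, this forces \(Y=V^*XV\) and makes \((X,V)\) a \(\Gamma\)-pair. Then \(\Gamma\)-convexity gives \(Y\in\mathcal{K}\), a contradiction. Once \(\Gamma(Y)\notin\mathcal{C}\) is established, the first part of the theorem applies.
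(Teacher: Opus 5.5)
Your argument is correct. Effros--Winkler applied to \(\Gamma(Y)\) and the closed matrix convex set \(\overline{\mathcal{C}}\) (which contains \(0\) because \(\Gamma(0)=0\)) already yields a monic size-\(n\) pencil, so Step 2 is not the obstacle you expect; the shift \(\delta I+(1-\delta)\Lambda\), the pullback along \(\Gamma\), and the \(\Gamma\)-pair argument for the closed case are all sound, the last because \(\Gamma(\mathcal{K})\) is closed under direct sums and so its matrix convex hull consists of compressions \(V^*\Gamma(X)V\). The paper gives no proof of its own here; it quotes \cite[Theorem 2.4]{JKMMP20}, and your route appears to be the standard argument behind that result.
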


In particular, this implies that \(\Gamma_{yx}\)-convex sets are intersections of self-adjoint hereditary quadratics.

{\bf Acknowledgments.} The authors would like to thank James E. Pascoe for his assistance with \(\Gamma\)-convexity.

\printbibliography[heading=bibintoc,title={References}]

\end{document}